\documentclass[12pt]{elsarticle}

\usepackage{amsmath,amssymb,amsthm}
\usepackage{mathrsfs}
\usepackage[bookmarks=true,
bookmarksnumbered=true,
bookmarksopen=true,
colorlinks,
pdfborder=001,
linkcolor=black]{hyperref}
\usepackage{bm,bbm}
\usepackage{array}
\usepackage{float}
\usepackage{color}
\usepackage{lineno}
\usepackage{caption}
\usepackage{stmaryrd}
\usepackage{extarrows}
\usepackage{subcaption}
\usepackage{graphicx}
\usepackage{multirow}
\usepackage{multicol}
\usepackage{epstopdf}
\usepackage{geometry}

\bibliographystyle{plain}

\voffset-10mm\headsep=10pt
\textheight=21cm\textwidth=16cm
\oddsidemargin 10pt \evensidemargin 10pt

\setlength\abovedisplayskip{1pt}
\setlength\belowdisplayskip{1pt}


\renewcommand{\vec}[1]{\mbox{\boldmath \small $#1$}}

\theoremstyle{plain}
\newtheorem{theorem}{Theorem}[section]
\newtheorem{example}{Example}[section]
\newtheorem{lemma}{Lemma}[section]

\newtheorem{remark}{Remark}[section]
\newtheorem{definition}{Definition}[section]
\newtheorem{proposition}{Proposition}[section]
\newtheorem{assumption}{Assumption}[section]
\numberwithin{equation}{section}
\numberwithin{figure}{section}
\numberwithin{table}{section}
\numberwithin{theorem}{section}

\newcommand{\defave}[1]{ \left\{ #1 \right\}}
\newcommand{\defjump}[1]{\left[\left[ #1 \right]\right]}

\newcommand{\defin}[1]{#1 ^{\tt in}}
\newcommand{\defout}[1]{#1 ^{\tt out}}

\newcommand{\normcp}[2]{\| #2 \|_{{\small C}^{#1}(\overline{\Omega})}}
\newcommand{\projection}[1]{\Pi_h[ #1 ]}

\def\inerface{\Sigma_{\tt int}}

\renewcommand{\div}{\mbox{div}}

\def\softd{{\leavevmode\setbox1=\hbox{d}%
          \hbox to 1.05\wd1{d\kern-0.4ex{\char039}\hss}}}
\begin{document}
	
\begin{frontmatter}
\title{Convergence of first-order Finite Volume Method based on Exact Riemann Solver for the Complete Compressible Euler Equations}
\author{M\'{a}ria Luk\'{a}\v{c}ov\'{a}-Medvi{\softd}ov\'{a}}
\ead{lukacova@uni-mainz.de}

\author{Yuhuan Yuan}
\ead{yuhuyuan@uni-mainz.de}
\address{Institute of Mathematics, Johannes Gutenberg-University Mainz\\Staudingerweg 9, 55128 Mainz, Germany}

\begin{abstract}
Recently developed concept of dissipative measure-valued solution for compressible flows is a suitable tool to describe oscillations and singularities possibly developed in solutions of multidimensional Euler equations.
In this paper we study the convergence of the first-order finite volume method based on the exact Riemann solver for the complete compressible Euler equations.
Specifically, we derive entropy inequality and prove the consistency of numerical method.  
Passing to the limit, we show the weak and strong convergence of numerical solutions and identify their limit.
The numerical results presented for the spiral, Kelvin-Helmholtz and the Richtmyer-Meshkov problem are consistent with our theoretical analysis.
\end{abstract}

\begin{keyword}
	compressible Euler equations\sep finite volume method\sep exact Riemann solver\sep disspipative measure-valued solution\sep convergence
\end{keyword}

\end{frontmatter}

\section{Introduction}
\noindent Hyperbolic conservation laws play an important role in describing many physical and engineering process.
An iconic example is the nonlinear system of compressible Euler equations, which governs the dynamics of a compressible material and incorporates mass, momentum and energy conservation.

A characteristic feature of nonlinear conservation laws is that discontinuities (shock waves) may develop after finite time even if the initial condition is smooth.
A natural way to overcome this difficulty would be to work with a concept of weak distributional solution.
However, it is well-known that such weak solutions fail to be unique.
Consequently, the second law of thermodynamics have been proposed as a selection criterion to rule out nonphysical solutions.
The entropy production principle have been successfully applied in the scalar multidimensional equations \cite{Kruzkov:1970} and one-dimensional systems \cite{Bressan-Crasta-Piccoli:2000,Bressan-Lewicka:2000}.
Unfortunately, it completely fails for multidimensional systems.
Recently, De Lellis and Sz\'{e}kelyhidi \cite{DeLellis-Szekelyhidi:2010} and Chiodaroli et al.  \cite{Chiodaroli-DeLellis-Kreml:2015} showed non-uniqueness of weak entropy solutions  for the multidimensional isentropic compressible Euler system, see also \cite{DeLellis-Szekelyhidi:2010} for similar non-uniqueness results for the incompressible Euler system.
These results have been extended to the complete Euler system in Feireisl et al.  \cite{Feireisl-Klingenberg-Kreml-Markfelder:2020}.

In order to describe oscillations arising in the limits of singular perturbations of hyperbolic conservation laws, a more generalized solution, i.e. measure-valued (MV) solution, was suggested by DiPerna.
In 1985 he showed for one-dimensional hyperbolic conservation laws that regularized solutions of associated diffusive and dispersive regularized systems converge to a MV solution, as a regularized parameter vanishes \cite{DiPerna:1985a}.
Recently the concept of MV solution was adapted and applied for multidimensional  compressible Euler system \cite{Kroner-Zajaczkowski:1996} and three-dimensional incompressible Euler equations \cite{DiPerna-Majda:1987},
see also \cite{Brenier-DeLellis-Szekelyhidi:2011,Malek-Necas-Rokyta-Ruzicka:1996}.

In this paper we work with the concept of \emph{dissipative measure-valued (DMV) solution} for the complete Euler system,
which enjoys the weak-strong uniqueness principle \cite{Brezina-Feireisl:2018a}.
Analogous concept has been adopted for the insentropic Euler system \cite{Gwiazda-Swierczewska-Wiedemann:2015},   compressible Navier-Stokes system \cite{Feireisl-Gwiazda-Swierczewska-Wiedemann:2016},  elastodynamics \cite{Demoulini-Stuart-Tzavaras:2012} and other related problems.

In the convergence analysis of numerical schemes for hyperbolic conservation laws
the entropy stability plays a crucial role, we refer a reader to a seminal paper of Tadmor \cite{Tadmor:1987}.
For multidimensional hyperbolic conservation laws the convergence analysis to MV solutions  was studied by
Fjordholm et al. 
\cite{Fjordholm-Mishra-Tadmor:2016,Fjordholm-Kappeli-Mishra-Tadmor:2017}.
For the Lax-Friedrichs and vanishing viscosity finite volume method,
Feireisl,  Luk\'{a}\v{c}ov\'{a} and Mizerov{\'a} generalized the above convergence result for the Euler system and proved the convergence to the DMV solution and the classical solution on its lifespan  \cite{Feireisl-Lukacova-Mizerova:2020a,Feireisl-Lukacova-Mizerova:2020}.
For generalization to viscous compressible flows we refer a reader to \cite{Feireisl-Lukacova-Mizerova-She:2019,Feireisl-Lukacova-Mizerova-She:2021}.
In
\cite{Feireisl-Lukacova-Mizerova:2020b,Feireisl-Lukacova-She-Wang:2019} a new tool, $\mathcal{K}$-convergence, has been developed to compute strong limits of oscillatory sequences of numerical solutions.

In the present paper we focus on the first-order finite volume method based on the exact Riemann problem solver for the complete compressible Euler system and show its convergence via
the concept of  DMV solutions.
We will only assume that our numerical solutions stay in a physically non-degenerate region, i.e.~density is bounded from below and energy is bounded from above.
Interestingly, this assumption is equivalent to the strict convexity of the mathematical entropy, see Lemma~\ref{lemma:equivalent-statement}.

The rest of the paper is organized as follows.
In Section~2 we introduce suitable notations and describe the finite volume method with a numerical flux based on the exact solution of the local Riemann problem, cf.~Godunov method.
In Section~3 we analyze the entropy inequality and give an explicit lower bound of the entropy Hessian matrix, see Appendix~\ref{section:lowerbound}.
A crucial step is to show the consistency of our numerical method.
Section~4 is devoted to the limiting process.
We prove that the numerical solutions will generate a weakly-(*) convergent subsequence and a Young measure, a DMV solution, to the Euler system.
Furthermore, employing the theory of $\mathcal{K}$-convergence and DMV--strong uniqueness principle, we obtain the strong convergence of the Ces\`{a}ro averages and strong convergence of numerical solutions to the weak/strong/classical solution.
Finally, in Section~5 we present numerical simulations and illustrate the effects of $\mathcal{K}$-convergence of numerical solutions.
The numerical results clearly demonstrate convergence results being consistent with our analysis.
As far as we know this is the first result in literature where the convergence of a well-known Godunov method has been proved rigorously for multidimensional Euler equations.

\section{Numerical method}
\noindent The complete Euler system can be written in the divergence form
\begin{equation} \label{eq:multiD-Euler}
\partial_t \vec U + \div_{\vec x} \vec F(\vec U) = 0, \quad (t,\vec x) \in (0,T) \times \Omega,
\end{equation}
where $\vec U, \vec F$ denote the conservative vector and flux, defined by
 \begin{equation*}
\vec U= (\rho, \vec m, E)^T, \hspace{1cm}
\vec F_{i}(\vec U)= (\rho u_i,\, u_i \vec m+ p \vec e_i, u_i (E+p))^T,\hspace{1cm}
i = 1, \cdots, d.
 \end{equation*}
Here $\rho$ denotes density, $\vec u:=(u_1, \cdots, u_d)$ velocity, $\vec m:= \rho \vec u$ momentum, $E := \frac{\rho |\vec u|^2}{2} + \rho e$ total energy,  $p$ pressure, $e$ internal energy, and the row vector $\vec e_i$ represents the $i$-th row of the unit matrix of size $d := \mbox{dim} (\vec x ), d=1,2,3$.

Throughout the whole text, we consider the bounded domain $\Omega \subset \mathbb{R}^d$ together with the space-periodic or no-flux boundary condition.
Moreover, the equation of state is restricted to
\begin{equation}
p = (\gamma-1)\rho e,
\end{equation}
where $\gamma \in (1,2]$ is the adiabatic constant.

\begin{remark}
For the Euler system \eqref{eq:multiD-Euler} the mathematical entropy pair can be given by
\begin{equation}\label{eq:mathematical-entropy-pair}
\eta = \frac{-\rho S}{\gamma -1}, \quad \vec q = \frac{- \rho S \vec u }{\gamma -1} = \eta \vec u
\end{equation}
with thermodynamic entropy $S$ defined by
\begin{equation}
S := \ln(p) - \gamma \ln(\rho).
\end{equation}
The corresponding entropy variable and entropy potential are given by
\begin{equation}
\vec \nu  := \nabla_{\vec U} \eta = \begin{pmatrix}
\frac{\gamma - S}{\gamma - 1} - \frac{\rho |\vec u|^2}{2p}\\
\frac{\rho \vec u}{p} \\
-\frac{\rho}{p}
\end{pmatrix},\quad  \vec \psi = \rho \vec u,
\end{equation}
respectively.
We require that in addition to \eqref{eq:multiD-Euler} the entropy inequality holds, i.e.
\begin{equation}\label{eq:Euler-entropy-inequality}
\partial_t  \, \eta(\vec U ) +  \rm{div}_{\vec x} \,\vec q(\vec U) \leq 0.
\end{equation}
\end{remark}

\subsection{Spatial discretization and notations}
\noindent The computational domain $\Omega$ consists of rectangular meshes $\overline{\Omega} := \bigcup_{K} \overline{K}$.
We denote the set of all mesh cells as $\mathcal{T}_h$, $\sigma$ stands for cell face, a unit normal vector to $\sigma$ is $\vec n$,
the face between two neighboring cells $K$ and $L$ is denoted by $\sigma_{K,L} = K|L$,
and the set of all interior faces is given as $\inerface = \{ \sigma \in \Sigma ~ : ~ \sigma \cap \partial \Omega = \emptyset\}$.
Moreover, for $\vec x \in \sigma$ we define
\begin{equation}
\defout{a}(\vec x) = \lim\limits_{\delta \rightarrow 0+} a(\vec x + \delta \vec n), \quad \defin{a}(\vec x) = \lim\limits_{\delta \rightarrow 0+} a(\vec x - \delta \vec n)
\end{equation}
and introduce the standard average- and jump-operators
\begin{align}
& \defave{a} = \frac{ \defin{a} + \defout{a} }{2},
\quad \defjump{a} = \defout{a} - \defin{a}.
\end{align}
We use the notation $a \lesssim b$ if there exists a generic constant $C > 0$ independent on the mesh discretization, such that $a \leq C\cdot b$ for $a,b\in \mathbb{R}$.

\subsection{Discrete function space and finite volume method}
\noindent Consider the space of piecewise constant functions
\begin{equation}
\mathcal{Q}_h(\Omega) = \{ v :  v|_{K^o} = \mbox{constant}, ~ \mbox{for all}~ K \in \mathcal{T}_h \}.
\end{equation}
We can define the projection
\begin{equation}
\Pi_h :  L^1(\Omega) \rightarrow \mathcal{Q}_h(\Omega), \quad \projection{\phi}_K = \frac{1}{|K|} \int_K \phi (x) ~dx,
\end{equation}
where $|K|$ is the Lebesgue measure of $K$.

Let $\vec U_h \in  \mathcal{Q}_h(\Omega;\mathbb{R}^{d+2})$.
We consider a semi-discrete finite volume method
\begin{subequations}
\begin{align}
& \frac{d }{d t} \vec U_K(t) + (\div_h \vec F_h(t))_K = 0, \quad t>0, ~ K \in \mathcal{T}_h, \label{eq:semi-discrete}\\
& \vec U_K(0) = \varPi_h(\vec U^0)_K, \quad  K \in \mathcal{T}_h,
\end{align}
\end{subequations}
where the numerical flux function $\vec F_h$ is given by means of the exact Riemann solver \cite{Toro:2009}.
Consequently, \eqref{eq:semi-discrete} can be rewritten as
\begin{equation} \label{eq:semi-discrete-RPflux}
\int_{K} \frac{d }{d t} \vec U_K(t) ~ d\vec x +  \sum_{\partial K \cap \partial L \neq \emptyset} \int_{\partial K \cap \partial L} \vec F(\vec U^{\it RP}_{\sigma_{K,L}}) \cdot \vec n_K  ~dS_{\vec x} = 0,
\end{equation}
where $\vec U^{\it RP}_{\sigma_{K,L}}$ is the solution of a local Riemann problem at face $\sigma_{K,L}$.
A piecewise constant solution $\vec U_h = \{ \vec U_K; K \in \mathcal{T}_h \}$ satisfies an equivalent weak form
\begin{equation} \label{eq:semi-discrete-RPflux-phi}
\int_{\Omega} \phi  \frac{d }{d t} \vec U_h ~ d\vec x -  \sum_{\sigma \in \inerface} \int_{\sigma} \vec F(\vec U^{\it RP}_{\sigma}) \cdot \vec n \defjump{\phi} ~dS_{\vec x} = 0,
\end{equation}
where $\phi \in \mathcal{Q}_h(\Omega)$.

\begin{remark}
	From here on, we write $(x,y)^T$ rather than $\vec x$ for $d=2$, and $(x,y,z)^T$  for $d=3$.
	For a two-dimensional uniform mesh a general cell is denoted by $K_{ij} = (x_{i-\frac12}, x_{i+\frac12}) \times  (y_{j-\frac12}, y_{j+\frac12})$
	with $(x_i,y_j)$ as the center, $\Delta x =  x_{i+\frac12} - x_{i-\frac12}$ and $\Delta y =  y_{j+\frac12} - y_{j-\frac12}$.
	Hence, \eqref{eq:semi-discrete-RPflux} can be rewritten as
	\begin{equation*}
	\frac{d }{d t} \vec U_{{ij}}(t)  + \frac{1}{\Delta x} \left( \vec F_1(\vec U^{\it RP}_{i+\frac12,j}) - \vec F_1(\vec U^{\it RP}_{i-\frac12,j}) \right)   + \frac{1}{\Delta y} \left( \vec F_2(\vec U^{\it RP}_{i,j+\frac12}) - \vec F_2(\vec U^{\it RP}_{i,j-\frac12}) \right) = 0,
	\end{equation*}
	where  $\vec U_{ij} := \vec U_{K_{ij}}$ and $\vec U^{\it RP}_{i+\frac12,j}$ is the solution at $(x_{i+\frac12}, t+0)$ of the following local Riemann problem
	\begin{equation*}
	\begin{cases}
	\partial_{\tau} \vec U + \partial_{x} \vec F_1 = 0, \quad \tau > t, \\
	\vec U(x,t) = \begin{cases}
	\vec U_{{i,j}}(t), & \mbox{ if }~ x < x_{i+\frac12}, \\
	\vec U_{{i+1,j}}(t), & \mbox{ if }~ x > x_{i+\frac12}
	\end{cases}
	\end{cases}
	\end{equation*}
	and $\vec U^{\it RP}_{i,j+\frac12}$ is the solution at $(y_{j+\frac12},t+0)$ of local Riemann problem
	\begin{equation*}
	\begin{cases}
	\partial_{\tau} \vec U + \partial_{y} \vec F_2 = 0, \quad \tau > t, \\
	\vec U(y,t) = \begin{cases}
	\vec U_{{i,j}}(t), & \mbox{ if }~ y < y_{j+\frac12}, \\
	\vec U_{{i,j+1}}(t), & \mbox{ if }~ y > y_{j+\frac12}.
	\end{cases}
	\end{cases}
	\end{equation*}
\end{remark}

\section{Consistency}
\noindent Before proving the consistency of finite volume method  \eqref{eq:semi-discrete-RPflux-phi} we formulate a physically reasonable assumption.
\begin{assumption}\label{assumption}
We assume that
\begin{equation}
0 < \underline{\rho} \leq \rho_h, \quad
E_h \leq \overline{E} \quad \mbox{uniformly for } h \rightarrow 0,~ t\in[0,T],
\end{equation}
where $ \underline{\rho},  \overline{E}$ are positive constants.
\end{assumption}

\begin{lemma}\label{lemma:lemma-U-bounded}
Under Assumption~\ref{assumption}
it holds
\begin{align}
&0 < \underline{\rho} \leq \rho_h \leq \overline{\rho}, \quad
|\vec u| \leq \overline{u},\quad
0 < \underline{p} \leq p_h \leq \overline{p}, ~ \\
& |\vec m| \leq \overline{m},\quad
0 < \underline{E} \leq E_h \leq \overline{E}, \quad
0 < \underline{\vartheta} \leq \vartheta_h \leq \overline{\vartheta}
\end{align}
uniformly for $h \rightarrow 0, t\in[0,T]$ with positive constants $\underline{\rho},\,\overline{\rho},\, \overline{u},\, \underline{p},\, \overline{p},\, \overline{m},\, \underline{E},\,\overline{E},\, \underline{\vartheta},\,\overline{\vartheta}$, where $\vartheta := \frac{p}{\rho}$ is the temperature.
\end{lemma}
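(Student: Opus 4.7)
The plan is to combine the two hypotheses of Assumption~\ref{assumption}, namely $\rho_h \geq \underline{\rho}$ and $E_h \leq \overline{E}$, with a pointwise lower bound on the thermodynamic entropy $S_h = \ln p_h - \gamma \ln \rho_h$ to extract every remaining bound through elementary thermodynamics. I would split the estimates into two groups: those that follow directly from the equation of state plus positivity of internal energy (upper bounds on $|\vec u_h|$ and on $p_h$), and those that additionally require an entropy lower bound (upper bound on $\rho_h$ and lower bounds on $p_h$ and $E_h$).

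First, exploiting $E_h = \tfrac12 \rho_h |\vec u_h|^2 + p_h/(\gamma-1)$ together with the non-negativity of internal energy $\rho_h e_h = p_h/(\gamma-1) \geq 0$, I obtain at once
\[
|\vec u_h|^2 \,\leq\, \frac{2 E_h}{\rho_h} \,\leq\, \frac{2 \overline{E}}{\underline{\rho}}, \qquad 0 \,\leq\, p_h \,\leq\, (\gamma-1) E_h \,\leq\, (\gamma-1)\overline{E},
\]
which produces the constants $\overline{u}$ and $\overline{p}$. Note that Assumption~\ref{assumption} alone cannot yield an upper bound on $\rho_h$: a state with large $\rho_h$, $\vec u_h = 0$ and very small $e_h$ still has small $E_h$, so extra information is unavoidable.

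The heart of the proof is the discrete minimum entropy principle $S_h \geq S_{\min}$ for the Godunov-type scheme. I expect to extract it from the discrete entropy inequality established later in Section~3: the exact Riemann solver preserves the minimum of $S$ across each local Riemann problem (a classical property, since $S$ strictly increases across admissible shocks and is constant across rarefactions and contacts), and the cell-averaging step preserves the lower bound because the maps $\vec U \mapsto -\rho\, g(S)$ are convex in $\vec U$ on the admissible state space for suitable non-decreasing $g$, so Jensen's inequality passes the bound from the piecewise Riemann solution to the averaged cell value. Establishing this bound is the main obstacle of the lemma; once it is in hand, everything else is algebra.

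From $S_h \geq S_{\min}$ together with $\rho_h \geq \underline{\rho}$, the identity $\ln p_h \geq S_{\min} + \gamma \ln \rho_h \geq S_{\min} + \gamma \ln \underline{\rho}$ yields $p_h \geq e^{S_{\min}}\underline{\rho}^\gamma =: \underline{p}$. Rearranging to $\gamma \ln \rho_h \leq \ln p_h - S_{\min}$ and combining with the already-established upper bound on $p_h$ gives $\rho_h \leq \bigl(\overline{p}\,e^{-S_{\min}}\bigr)^{1/\gamma} =: \overline{\rho}$. The remaining estimates are then immediate: $|\vec m_h| = \rho_h |\vec u_h| \leq \overline{\rho}\,\overline{u} =: \overline{m}$; from $E_h \geq p_h/(\gamma-1)$ one obtains $E_h \geq \underline{p}/(\gamma-1) =: \underline{E}$; and the temperature $\vartheta_h = p_h/\rho_h$ is trapped between $\underline{p}/\overline{\rho} =: \underline{\vartheta}$ and $\overline{p}/\underline{\rho} =: \overline{\vartheta}$, completing the list of bounds.
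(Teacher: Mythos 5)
Your proof is correct and, in its second half, reproduces the paper's algebra essentially verbatim: your constants $\underline{p}=e^{S_{\min}}\underline{\rho}^{\gamma}$, $\overline{\rho}=(\overline{p}\,e^{-S_{\min}})^{1/\gamma}$, $\underline{E}=\underline{p}/(\gamma-1)$, $\underline{\vartheta}=\underline{p}/\overline{\rho}$, $\overline{\vartheta}=\overline{p}/\underline{\rho}$ coincide with the paper's once one observes that $e^{-S_{\min}}=\overline{C}^{\gamma-1}$, because the paper's threshold $z_0=(\gamma-1)\ln(1/\overline{C})$ is exactly $\min_K S_K(0)$. Where you genuinely diverge is the mechanism for the key intermediate fact, the minimum entropy principle $S_h\geq S_{\min}$, equivalently $\rho_h\leq \overline{C}\vartheta_h^{1/(\gamma-1)}$. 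You obtain it pointwise, \`a la Tadmor: the exact Riemann solution satisfies $S\geq\min(S_L,S_R)$ (entropy increases across admissible shocks, is constant across rarefactions and contacts), and cell averaging preserves the bound by Jensen's inequality for convex maps $\vec U\mapsto-\rho\, g(S)$. The paper instead invokes entropy stability of the scheme for the Harten family of renormalized entropies $\eta=\rho\chi(S)$, chooses a cutoff $\chi$ vanishing above $z_0$ and negative below, and integrates the entropy inequality in space: since $\chi(S_h(0))\equiv 0$ by the definition of $\overline{C}$, the sign of $\chi$ forces $\rho_h\chi(S_h)=0$ a.e., i.e., $S_h\geq z_0$. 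These are two proofs of the same principle; yours is local and constructive, the paper's is global and reuses the entropy stability (Harten--Lax--van Leer, Chen--Shu) that the consistency analysis needs anyway. Two caveats on your route: first, the averaging/Jensen step presupposes the fully discrete Godunov update (evolve the Riemann fans exactly, then project onto piecewise constants), so it does not literally apply to the semi-discrete scheme \eqref{eq:semi-discrete-RPflux-phi}; to cover the semi-discrete case you would either pass through the fully discrete scheme and let the time step vanish, or fall back on the renormalized entropy inequalities as the paper does. Second, ``suitable non-decreasing $g$'' hides a quantitative convexity condition (Harten's $\chi''/\chi'<1/\gamma$, cf.\ Appendix~\ref{section:lowerbound}), which should be cited or verified rather than assumed. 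Your side remark that Assumption~\ref{assumption} alone cannot bound $\rho_h$ from above is correct and worth keeping, and your $\overline{m}=\overline{\rho}\,\overline{u}$ is a valid, slightly cruder, alternative to the paper's $\sqrt{\overline{\rho}\,\overline{E}}$.
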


\begin{proof}
Here we only give the idea and framework of the proof. More details could be found in \cite{Feireisl-Lukacova-Mizerova:2020a}.

It is easy to observe that
\begin{align*}
& p = (\gamma-1) \left[E - \frac{|\vec m|^2}{2\rho} \right] \leq (\gamma-1)E \leq (\gamma-1) \overline{E} , \quad
|\vec u|^2 \leq \frac{2E}{\rho} \leq \frac{2\overline{E}}{\underline{\rho}},
\end{align*}
which implies
\begin{equation*}
\overline{p} = (\gamma-1) \overline{E}, \quad
\overline{u} = \sqrt{ \frac{2\overline{E}}{\underline{\rho}}}, \quad
\overline{\vartheta} = \frac{\overline{p}}{\underline{\rho}}.
\end{equation*}
On the other hand, we use the fact that the finite volume method \eqref{eq:semi-discrete-RPflux-phi} is entropy stable, cf.  \cite{Harten-Lax-VanLeer:1983,Chen-Shu:2017}.
Thus, using the renormalized entropy
\begin{align*}
& \eta = \rho \chi(S) , \quad \chi '(z) \geq 0, \quad
 \chi (z) = \begin{cases}
< 0, & \mbox{\it if }~z \leq z_0, \\
= 0, & \mbox{\it if }~z \geq z_0,\\
\end{cases}
  \\
& z_0 = (\gamma-1) \ln(1/\overline{C}), \quad
	\overline{C} := \max_{K} \frac{\rho_K(0)}{\vartheta_K(0)^{1/(\gamma-1)}}
\end{align*}
implies
\begin{equation}
0 < \rho_h \leq \overline{C}\vartheta_h^{1/(\gamma-1)}.
\end{equation}
Consequently, we obtain
\begin{align*}
0 < \underline{\rho}^{\gamma} \leq \rho_h^{\gamma}
\leq   \overline{C}^{\gamma-1} \rho_h  \vartheta_h = \overline{C}^{\gamma-1}  p_h \leq \overline{C}^{\gamma-1}  (\gamma-1) E_h \leq \overline{C}^{\gamma-1}  (\gamma-1) \overline{E},
\end{align*}
which gives
\begin{align*}
& \overline{\rho} = \big(\overline{C}^{\gamma-1}  (\gamma-1) \overline{E}\big)^{1/\gamma}, \quad
	\overline{m} = \sqrt{\overline{\rho}\overline{E} }, \quad
 \underline{p} = \frac{\underline{\rho}^{\gamma}}{\overline{C}^{\gamma-1} }, \quad
	\underline{E} =  \frac{\underline{\rho}^{\gamma}}{(\gamma-1)\overline{C}^{\gamma-1} }, \quad
	\underline{\vartheta} = \frac{\underline{p}}{\overline{\rho}}
\end{align*}
and concludes the proof.
\end{proof}

Note that Assumption~\ref{assumption} is equivalent to the strict convexity of mathematical entropy function \eqref{eq:mathematical-entropy-pair}, see Appendix~\ref{section:lowerbound}. 

\begin{lemma} \label{lemma:usefulestimation}
Under Assumption~\ref{assumption}
it holds
\footnote{We use the notations $\|\cdot\|,\, \|\cdot\|_2$ and $|\cdot|$ for
 the $L^1$-, $L^2$-norm and the absolute value, respectively.}
\begin{itemize}
\item[(1)]  $\|\defjump{\vec F_h} \cdot \vec n\| \lesssim \|\defjump{\vec U_h}\| < \infty$,
$\|\defjump{\vec V_h}\| \lesssim \|\defjump{\vec U_h}\| < \infty$,~$\|\defjump{\vec U_h}\| \lesssim \|\defjump{\vec V_h}\| < \infty$.
\item[(2)] $\nabla_{\vec U}^2 \eta(\vec U_h) \geq \underline{\eta} I $ with a positive constant $\underline{\eta}$.
\end{itemize}
Here $\vec V = (\rho, \vec u, p)^T$ and $\vec F_h = \vec F(\vec U_h), ~ \vec \nu_h = \vec \nu(\vec U_h),~  \vec V_h = \vec V(\vec U_h)$.
\end{lemma}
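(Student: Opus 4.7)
The plan is to split the argument along the statement. For part~(1), the main tool is the uniform $L^\infty$ bounds from Lemma~\ref{lemma:lemma-U-bounded}, which confine $\vec U_h$ and both of its one-sided traces to the compact set
\begin{equation*}
\mathcal{K} := \bigl\{\vec U = (\rho,\vec m, E)^T : \underline{\rho}\leq \rho \leq \overline{\rho},\ |\vec m|\leq \overline{m},\ \underline{E}\leq E\leq \overline{E}\bigr\} \subset (0,\infty)\times\mathbb{R}^d\times(0,\infty).
\end{equation*}
On $\mathcal{K}$ the flux $\vec U\mapsto \vec F(\vec U)$, the conservative-to-primitive map $\vec U\mapsto \vec V$, and its inverse $\vec V\mapsto\vec U$ are all of class $C^1$: the only potentially nonsmooth step is the division by $\rho$ appearing in $\vec u = \vec m/\rho$ and $p = (\gamma-1)\bigl(E-|\vec m|^2/(2\rho)\bigr)$, which is harmless because $\rho\geq\underline{\rho}>0$ on $\mathcal{K}$. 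Consequently the relevant Jacobians are uniformly bounded on $\mathcal{K}$, and the mean-value inequality applied pointwise on each face $\sigma$ yields $|\defjump{\vec F_h}\cdot\vec n|\lesssim|\defjump{\vec U_h}|$, $|\defjump{\vec V_h}|\lesssim|\defjump{\vec U_h}|$ and $|\defjump{\vec U_h}|\lesssim|\defjump{\vec V_h}|$. Integrating over $\bigcup_{\sigma\in\inerface}\sigma$ gives the three claimed $L^1$ inequalities, while finiteness follows from the uniform $L^\infty$ bound on the traces combined with the finite total face measure of $\inerface$.

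For part~(2), I would use that by the remark immediately preceding the lemma, Assumption~\ref{assumption} is equivalent to the strict convexity of the mathematical entropy $\eta$ defined in \eqref{eq:mathematical-entropy-pair}. Hence $\nabla^2_{\vec U}\eta(\vec U)$ is symmetric positive definite at every $\vec U\in\mathcal{K}$. Since $\mathcal{K}$ is compact and $\vec U\mapsto\lambda_{\min}\!\bigl(\nabla^2_{\vec U}\eta(\vec U)\bigr)$ is continuous (the Hessian entries are smooth on $\mathcal{K}$ where $\rho,p$ are bounded away from zero, and the smallest eigenvalue of a symmetric matrix depends continuously on its entries via the Courant--Fischer formula), the minimum
\begin{equation*}
\underline{\eta} := \min_{\vec U\in\mathcal{K}} \lambda_{\min}\!\bigl(\nabla^2_{\vec U}\eta(\vec U)\bigr)
\end{equation*}
is attained and strictly positive. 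Because $\vec U_h(t,\vec x)\in\mathcal{K}$ for all $h$ and all $(t,\vec x)\in[0,T]\times\Omega$, one concludes $\nabla^2_{\vec U}\eta(\vec U_h)\geq\underline{\eta} I$ uniformly for $h\to 0$.

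The main obstacle is that this compactness argument in part~(2) produces $\underline{\eta}$ only abstractly and does not reveal how it depends on the physical bounds. Expressing $\underline{\eta}$ quantitatively in terms of $\underline{\rho},\overline{\rho},\underline{p},\overline{p},\overline{u}$ is precisely the purpose of Appendix~\ref{section:lowerbound}, and I expect this to be the real technical work: one has to write $\nabla^2_{\vec U}\eta$ as an explicit $(d+2)\times(d+2)$ matrix, then bound its smallest eigenvalue by a Schur-complement block decomposition that decouples the thermodynamic block (in the $\rho,E$ directions) from the kinetic block (in the $\vec m$ direction), each being controllable via the bounds of Lemma~\ref{lemma:lemma-U-bounded}. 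Part~(1), by contrast, is essentially bookkeeping once the compactness of $\mathcal{K}$ is in hand.
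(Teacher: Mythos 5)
Your proposal is correct and takes essentially the paper's own route: part (1) is exactly the paper's argument that Lemma~\ref{lemma:lemma-U-bounded} confines the one-sided traces to a compact set on which $\vec F(\vec U)$, $\vec V(\vec U)$ and $\vec U(\vec V)$ are smooth, hence uniformly Lipschitz, and part (2) is the same compactness argument that the paper runs in contrapositive form (a subsequence with $\lambda_{\min}\to 0$ would converge to a state with degenerate Hessian, contradicting Harten's pointwise positive definiteness), so your direct minimization of $\lambda_{\min}\bigl(\nabla^2_{\vec U}\eta\bigr)$ over $\mathcal{K}$ is an equivalent phrasing, with the explicit value of $\underline{\eta}$ deferred to Appendix~\ref{section:lowerbound} just as in the paper (which, incidentally, obtains it there by bounding the smallest root of the Hessian's characteristic polynomial rather than by a Schur-complement decomposition, though your lemma proof does not depend on that). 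One small repair: include the bound $\underline{p}\leq p\leq \overline{p}$ from Lemma~\ref{lemma:lemma-U-bounded} in the definition of $\mathcal{K}$, since your set as written contains states with $p\leq 0$ where $S$, and hence $\nabla^2_{\vec U}\eta$, is undefined; the intersected set is still compact, and still convex for the mean-value step in part (1) because $p=(\gamma-1)\bigl(E-|\vec m|^2/(2\rho)\bigr)$ is concave in $\vec U$ for $\rho>0$, after which both parts go through verbatim.
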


\begin{proof} Lemma~\ref{lemma:lemma-U-bounded} implies the boundedness of $\{\vec U_h\}$ and $\{\vec V_h\}$.
\begin{itemize}
\item[(1)] Since $\vec F(\vec U), \vec \nu(\vec U), \vec V(\vec U)$ are smooth with respect to $\vec U$ and $\vec U(\vec V)$ is smooth with respect to $\vec V$, the inequalities in (1) hold.

\item[(2)]  In \cite{Harten:1983a} Harten proved that $\nabla_{\vec U}^2 \eta(\vec U)$ is positive definite for all $\vec U$.
Assume that in (2) $\underline{\eta} = 0$. Then there exists a subsequence $\vec U_{h_n}$ satisfying $\nabla_{\vec U}^2 \eta(\vec U_{h_n}) \leq \frac1n$. Since $\{\vec U_{h_n}\}$ is a bounded set, there exists a  subsequence $\vec U_{h_{n_k}}$ which converges to some $\vec U$. Hence, we get $\nabla_{\vec U}^2 \eta(\vec U) = 0$, which is a contradiction.
\end{itemize}
\end{proof}

\begin{remark}
Detailed calculations to the proof of statement (1) for Lemma~\ref{lemma:usefulestimation} are presented in Appendix~\ref{section:Lipschitzcontinuity}.
The explicit expression of the lower bound $\underline{\eta}$ of $\nabla_{\vec U}^2 \eta(\vec U_h) $ is derived in Appendix~\ref{section:lowerbound}.
\end{remark}

\subsection{Entropy inequality}
\noindent In \cite{Harten-Lax-VanLeer:1983} Harten proved that the Godunov finite volume method based on the exact solution of the Riemann problem satisfies the entropy inequality (in one-dimensional case)
\begin{equation*}
\eta( \vec U_i(t_{n+1})) - \eta( \vec U_i(t_{n})) \leq \frac{\tau}{\Delta x}\left(-\vec q( \vec U^{\it RP}_{i+1/2}) +  \vec q( \vec U^{\it RP}_{i-1/2})\right),
\end{equation*}
where $\tau= t_{n+1}-t_n$ is the time-step.
Generalizing to multi-dimensions and writing it in the semi-discretization form give
\begin{equation} \label{eq:semi-discrete-entropy}
\int_{K} \frac{d }{d t} \eta_K(t) ~ d\vec x +  \sum_{\partial K \cap \partial L \neq \emptyset} \int_{\partial K \cap \partial L} \vec q(\vec U^{\it RP}_{\sigma_{K,L}}) \cdot \vec n_K  ~dS_{\vec x} \leq 0.
\end{equation}
Further, we get the equivalent weak form
\begin{equation} \label{eq:semi-discrete-entropy-phi}
\int_{\Omega} \phi  \frac{d }{d t} \eta_h ~ d\vec x -  \sum_{\sigma \in \inerface} \int_{\sigma} \vec q(\vec U^{\it RP}_{\sigma}) \cdot \vec n \defjump{\phi} ~dS_{\vec x} \leq 0
\end{equation}
with $\phi \in \mathcal{Q}_h(\Omega), \phi \geq 0$ and $\eta_h = \eta(\vec U_h)$.

On the other hand, Chen and Shu in \cite{Chen-Shu:2017} showed that the Godunov finite volume method is entropy stable, i.e.
\begin{align} \label{eq:semi-discrete-entropy-1}
& \int_{K} \frac{d }{d t} \eta_K(t) ~ d\vec x +  \sum_{\partial K \cap \partial L \neq \emptyset} \int_{\partial K \cap \partial L} \vec Q_{\sigma} \cdot \vec n_K  ~dS_{\vec x} \leq 0
\end{align}
with
\begin{align*}
	& \vec Q_{\sigma} \cdot \vec n_K= \defave{\vec \nu}_{\sigma}  \cdot (\vec F(\vec U^{\it RP}_{\sigma}) \cdot \vec n_K)-  \defave{\vec \psi}_{\sigma} \cdot \vec n_K.
\end{align*}
Writing explicitly the entropy dissipation we get
\begin{align} \label{eq:semi-discrete-entropy-2}
	& \int_{K} \frac{d }{d t} \eta_K(t) ~ d\vec x +  \sum_{\partial K \cap \partial L \neq \emptyset} \int_{\partial K \cap \partial L} \vec Q_{\sigma} \cdot \vec n_K  ~dS_{\vec x} = \frac12\sum_{\partial K \cap \partial L \neq \emptyset} \int_{\partial K \cap \partial L} r_{\sigma}  ~dS_{\vec x},
\end{align}
where
\begin{align*}
r_{\sigma}
& = \defjump{\vec \nu}_{\sigma}  \cdot (\vec F(\vec  U^{\it RP}_{\sigma})  \cdot \vec n)-  \defjump{\vec \psi}_{\sigma} \cdot \vec n \\
& = \defjump{\vec \nu}_{\sigma}  \cdot  \int_{-1/2}^{1/2} \bigg[ \vec F(\vec  U(\vec \nu^{\it RP}_{\sigma}) )  - \vec F(\vec U(\vec \nu(s)) \bigg]  \cdot \vec n ~ds := \defjump{\vec \nu}_{\sigma}  	\cdot \vec D_{\sigma} \cdot \defjump{\vec \nu}_{\sigma} ,
\end{align*}
and $\vec \nu(s) = \defave{\vec \nu}_{\sigma} + s \defjump{\vec \nu}_{\sigma} $.
Note that matrix $\vec D_{\sigma}$ is symmetric because $\frac{d \vec F_i}{d \vec \nu}, (i =1,\cdots, d)$ is symmetric.
Hence, \eqref{eq:semi-discrete-entropy-1} implies $r_{\sigma} \leq 0$, i.e., $\vec v \cdot D_{\sigma} \cdot \vec v \leq 0$ for any $\vec v \in \mathbb{R}^{d+2}$.
Combining Lemma~\ref{lemma:lemma-U-bounded} we have  $\vec v \cdot D_{\sigma} \cdot \vec v\leq \underline{d} < 0$ uniformly for $h \rightarrow 0$.
Therefore, it holds
\begin{align*}
	& \int_{K} \frac{d }{d t} \eta_K(t) ~ d\vec x +  \sum_{\partial K \cap \partial L \neq \emptyset} \int_{\partial K \cap \partial L} \vec Q_{\sigma} \cdot \vec n_K  ~dS_{\vec x} \\
\leq ~	&  \frac{\underline{d}}2 \sum_{\partial K \cap \partial L \neq \emptyset} \int_{\partial K \cap \partial L}  \| \defjump{\vec \nu}_{\sigma} \|_2^2  ~dS_{\vec x}
	 \leq \frac{\underline{d} \underline{\eta}}2 \sum_{\partial K \cap \partial L \neq \emptyset} \int_{\partial K \cap \partial L}  \|\defjump{\vec U}_{\sigma} \|_2^2 ~dS_{\vec x} ,
\end{align*}
which implies the weak BV estimate
\begin{equation}\label{eq:U-l2norm-bound}
\int_{0}^{\tau}\sum_{\sigma \in \inerface} \int_{\sigma} \|\defjump{\vec U}_{\sigma} \|_2^2 ~dS_{\vec x} dt\leq - \frac{1}{\underline{d} \underline{\eta}} \bigg( \int_{\Omega}  \eta_h(0) ~ d\vec x - \int_{\Omega} \eta_h(\tau) ~ d\vec x\bigg) \leq C,
\end{equation}
where $C$ is a positive constant depending on $\underline{d}, \underline{\eta}, \underline{\rho}, \overline{E}$.
Realizing that {\small$\sum\limits_{\sigma \in \inerface} \int_{\sigma} \|\defjump{\vec U}_{\sigma} \|_2^2/h ~dS_{\vec x} $} represents $H_0^1$-seminorm,
we have the following interpretation of  \eqref{eq:U-l2norm-bound}.
It tells us that
\begin{equation}
\|\vec U\|_{L^2(0,T;H^1_0(\Omega))} \lesssim h^{-1/2}.
\end{equation}

\subsection{Difference between $\vec U^{\it RP}$ and $\vec U_L, \vec U_R$} \label{section:difference}
\noindent For the sake of convenience, we write $ (u,v,w)$ in place of $\vec u$.
Taking $x$-direction as an example, our strategy is to study the difference between $\vec U(x/t;\vec U_L,\vec U_R)$  and $\vec U_K, (K = L,R)$, where $\vec U(x/t;\vec U_L,\vec U_R)$ is the solution along the line $x/t$ of the following Riemann problem
\begin{equation} \label{eq:Euler-RP}
\begin{cases}
\partial _t \vec U + \partial_{ x} \vec F_1 = 0, \quad  t > 0,\\
\vec U(x, 0)= \begin{cases}
\vec U_L, & \mbox{if} ~  x < 0,\\
\vec U_R, & \mbox{if} ~ x > 0,\\
\end{cases}
\end{cases}
\end{equation}
where $\vec U_{L}, \vec U_{R}$ are constant.
Once $\|\vec U(x/t; \vec U_L,\vec U_R)-\vec U_K\|$ is clearly known, then with the definition of $\vec U^{\it RP} = \vec U(0;\vec U_L,\vec U_R)$, we can directly estimate $\|\vec U^{\it RP} -  \vec U_K\|$.

We divide the $x$-$t$ domain into four parts separated by the left and right (non-linear) waves and the middle (linear-degenerated) waves.
Figure \ref{figure:RP} shows the possible wave patterns including left rarefaction, right rarefaction;  left shock, right shock; left rarefaction, right shock; left shock, right rarefaction.
Due to  Lemma~\ref{lemma:usefulestimation}, the estimate of $\|\vec U(x/t; \vec U_L,\vec U_R)-\vec U_K\|$ reduces to estimating $\|\vec V_{*M} - \vec V_K\|$ and $\| \vec V_{\it fan}-\vec V_K\|$ with $M = L,R$ and $K = L,R$.

Moreover, the Riemann Invariants and Rankine-Hugoniot conditions imply that $v = v_L, w = w_L$ before the middle wave and $v = v_R, w = w_R$ after the  middle wave.
Hence, we only need to study the change of $\rho, u, p$.

\begin{figure}[htbp]
	\setlength{\abovecaptionskip}{0.cm}
	\setlength{\belowcaptionskip}{-0.cm}
	\centering
	\begin{subfigure}{0.22\textwidth}
		\includegraphics[width=\textwidth]{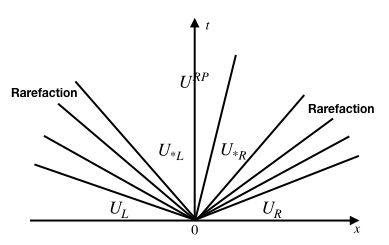}
		\caption{}
	\end{subfigure}	
	\begin{subfigure}{0.22\textwidth}
		\includegraphics[width=\textwidth]{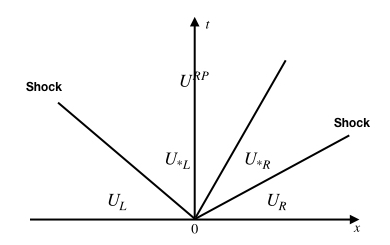}
		\caption{}
	\end{subfigure}	
	\begin{subfigure}{0.22\textwidth}
		\includegraphics[width=\textwidth]{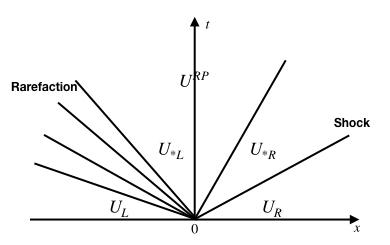}
		\caption{}
	\end{subfigure}	
	\begin{subfigure}{0.22\textwidth}
		\includegraphics[width=\textwidth]{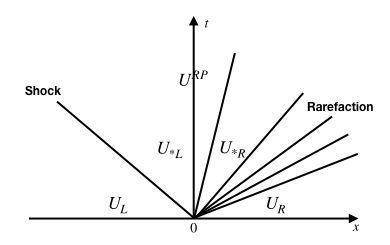}
		\caption{}
	\end{subfigure}	
	\caption{\small{Possible wave patterns in the solution of the Riemann problem: (a) left rarefaction, right rarefaction;
			(b) left shock, right shock;
			(c) left rarefaction, right shock;
			(d) left shock, right rarefaction. }}\label{figure:RP}
\end{figure}

\subsubsection{Preliminaries}
\noindent Here we list some results which will be used for the estimates of $\|\vec V_{*M} - \vec V_K\|$ and $\| \vec V_{\it fan}-\vec V_K\|$, cf.  \cite{Feistauer, Toro:2009}.

\begin{proposition}(solution for $p_*$ and $u_*$)
	The solution for pressure $p_*$ of the Riemann problem \eqref{eq:Euler-RP} is given by the root of the algebraic equation
	\begin{equation}
	f(p,\vec U_L,\vec U_R) := f_L(p,\vec U_L) + f_R(p,\vec U_R) + \Delta u= 0, \quad \Delta u = u_R - u_L,
	\end{equation}
	where the function $f_K, (K = L \mbox{ or } R)$ is given by
	\begin{equation*}
		f_K(p, \vec U_K) =
		\begin{cases}
			f_s(p, \vec U_K), & \mbox{if }~ p>p_K ~ \mbox{shock},\\
			f_r(p, \vec U_K), & \mbox{if }~ p\leq p_K ~ \mbox{rarefaction}\\
		\end{cases}
	\end{equation*}
	with
	\begin{align}
		& f_s(p, \vec U_K)  = (p-p_K) \left(\frac{A_K}{p+B_K}\right)^{1/2}, \quad
		A_K = \frac{2}{(\gamma+1)\rho_K}, \label{eq:RPsolver-fshock}\\
		& f_r(p, \vec U_K) = \frac{2a_K}{(\gamma-1)} \left[\left(\frac{p}{p_K}\right)^{\frac{\gamma-1}{2\gamma}} - 1\right], \quad
		B_K = \frac{\gamma-1}{\gamma+1} p_K. \label{eq:RPsolver-frarefaction}
	\end{align}
	The solution for the velocity $u_*$ in the star region is
	\begin{equation}
	u_* = \frac12 (u_L+u_R) +  \frac12 [f_R(p_*) - f_L(p_*)].
	\end{equation}
\end{proposition}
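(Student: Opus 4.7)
The plan is to analyze the wave structure of the Riemann problem \eqref{eq:Euler-RP} and assemble the solution from elementary wave relations. The self-similar solution consists of three waves emanating from the origin: a left-going $1$-wave (shock or rarefaction), a linearly degenerate middle wave (contact discontinuity), and a right-going $3$-wave (shock or rarefaction). The contact splits the star region into a left star state $\vec V_{*L}=(\rho_{*L},u_{*L},p_{*L})$ and a right star state $\vec V_{*R}=(\rho_{*R},u_{*R},p_{*R})$, and the Rankine--Hugoniot conditions across the linearly degenerate field force $u_{*L}=u_{*R}=:u_*$ and $p_{*L}=p_{*R}=:p_*$, while $\rho$ may jump. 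This already reduces the problem to determining the scalars $p_*$ and $u_*$.

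The heart of the argument is to express $u_*$ as a function of $p_*$ separately across each nonlinear wave. For the left wave I split into two cases. If $p_*>p_L$ the $1$-wave is a shock; combining the Rankine--Hugoniot relations for mass and momentum across the shock with the ideal-gas law $p=(\gamma-1)\rho e$, I can eliminate the shock speed and the star density to obtain $u_*=u_L-f_s(p_*,\vec U_L)$ with $f_s$ as in \eqref{eq:RPsolver-fshock}; the constants $A_L$ and $B_L$ arise naturally in the algebraic rearrangement. If $p_*\le p_L$ the $1$-wave is a centered rarefaction; using constancy of the $1$-Riemann invariant $u+\tfrac{2a}{\gamma-1}$ along the isentrope $p\rho^{-\gamma}=\mathrm{const}$ across the fan and the relation $a^2=\gamma p/\rho$ yields $u_*=u_L-f_r(p_*,\vec U_L)$ with $f_r$ as in \eqref{eq:RPsolver-frarefaction}. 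Patching both branches produces $u_*=u_L-f_L(p_*,\vec U_L)$ with the piecewise $f_L$ from the statement. An analogous computation on the right, with sign reversals reflecting the opposite orientation of the $3$-wave, yields $u_*=u_R+f_R(p_*,\vec U_R)$.

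Finally, subtracting the two identities eliminates $u_*$ and yields the implicit equation
\[
f_L(p_*,\vec U_L)+f_R(p_*,\vec U_R)+(u_R-u_L)=0,
\]
while taking their half-sum produces $u_*=\tfrac12(u_L+u_R)+\tfrac12[f_R(p_*)-f_L(p_*)]$, as claimed. Well-posedness of the root $p_*$ follows from standard monotonicity properties of $f(\cdot,\vec U_L,\vec U_R)$: each of $f_s,f_r$ is $C^1$ and strictly increasing in $p$, and $f\to -\infty$ as $p\to 0^+$ while $f\to +\infty$ as $p\to\infty$, so a unique positive root exists.

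The main obstacle is the case-by-case algebra for the shock branch: extracting the explicit form of $f_s$ from the Rankine--Hugoniot system requires careful elimination using $e=p/[(\gamma-1)\rho]$ and is where the particular constants $A_K,B_K$ are forced upon us. The rarefaction branch, by contrast, reduces to an elementary ODE integration along the isentrope. Once both branches are in hand, the remainder of the argument is purely algebraic.
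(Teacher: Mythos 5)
Your derivation follows exactly the classical route that the paper itself relies on: the paper gives no proof of this proposition but quotes it from the exact Riemann solver literature (\cite{Toro:2009,Feistauer}), and your argument --- contact discontinuity forcing $p_{*L}=p_{*R}=p_*$ and $u_{*L}=u_{*R}=u_*$, the shock branch via jump conditions, the rarefaction branch via constancy of $u\pm\frac{2a}{\gamma-1}$ along the isentrope, then subtracting and half-summing the two expressions $u_*=u_L-f_L(p_*)$ and $u_*=u_R+f_R(p_*)$ --- is precisely the derivation found there. The formulas \eqref{eq:RPsolver-fshock}, \eqref{eq:RPsolver-frarefaction} and the expressions for $p_*$ and $u_*$ are obtained correctly.

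Two points need repair, one of them a genuine error. First, your well-posedness claim that $f\to-\infty$ as $p\to 0^+$ is false: on $(0,p_K]$ the active branch is $f_r$, and $f_r(p,\vec U_K)\to-\frac{2a_K}{\gamma-1}$ as $p\to 0^+$, so $f(0^+)=\Delta u-\frac{2(a_L+a_R)}{\gamma-1}$ is \emph{finite}. Consequently a positive root exists if and only if the pressure positivity (no-vacuum) condition $u_R-u_L<\frac{2(a_L+a_R)}{\gamma-1}$ holds; when it fails, vacuum forms and the algebraic equation has no positive root. Uniqueness is unaffected, since $f_K'>0$ (Proposition~\ref{proposition:fk}) and $f_s(p,\vec U_K)\sim\sqrt{A_K\,p}\to\infty$ as $p\to\infty$; in the context of this paper, existence is implicitly secured because Assumption~\ref{assumption} and Lemma~\ref{lemma:lemma-U-bounded} keep the states away from vacuum. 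Second, a smaller imprecision in the shock branch: the mass and momentum jump conditions together with the equation of state do \emph{not} suffice to eliminate the shock speed and star density --- for given $p_*$ you have three unknowns $s_L,\rho_{*L},u_*$ and only two equations. You also need the energy (Hugoniot) jump condition, which yields the density ratio $\rho_{*L}/\rho_L=\bigl(\frac{p_*}{p_L}+\frac{\gamma-1}{\gamma+1}\bigr)\big/\bigl(\frac{\gamma-1}{\gamma+1}\frac{p_*}{p_L}+1\bigr)$ (as quoted in the paper's remark following the proposition) and is what forces the constants $A_K,B_K$ in $f_s$.
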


\begin{remark}
	More precisely, $u_*, p_*$ satisfy
	\begin{equation}
	u_* = u_L - f_L(p_*, \vec U_L), \quad u_* = u_R - f_R(p_*, \vec U_R).
	\end{equation}
	For the right shock the density $\rho_{*R}$ is found to be
	\begin{equation}
	\rho_{*R} = \rho_R \left[ \frac{ \frac{p_*}{p_R} + \frac{\gamma-1}{\gamma+1} }{ \frac{\gamma-1}{\gamma+1} \frac{p_*}{p_R}  + 1} \right]
	\end{equation}
	and the shock speed is
	\begin{equation}
	s_{R} = u_R + a_R \left[ \frac{\gamma+1}{2\gamma} \frac{p_*}{p_R} + \frac{\gamma-1}{2\gamma}   \right]^{1/2}.
	\end{equation}
\end{remark}

\begin{proposition}\label{proposition:fk}
	It holds
	\begin{equation}
	f'_K > 0,  ~ f''_K < 0, \quad K = L,  R.
	\end{equation}
\end{proposition}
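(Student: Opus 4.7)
The plan is to verify the two inequalities separately on each branch of the piecewise-defined $f_K$, then match them at the junction $p = p_K$ to conclude $C^{2}$-regularity across the whole half-line.

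On the rarefaction branch $p \leq p_K$, the function $f_r(p,\vec U_K)$ in \eqref{eq:RPsolver-frarefaction} reduces, up to positive multiplicative constants, to $(p/p_K)^{\alpha}-1$ with exponent $\alpha := (\gamma-1)/(2\gamma) \in (0,1/2]$, since $\gamma \in (1,2]$. Two direct differentiations give $f'_r \propto (p/p_K)^{\alpha-1} > 0$ and $f''_r \propto (\alpha-1)(p/p_K)^{\alpha-2} < 0$, the latter sign being forced by $\alpha < 1$. No further structure of the Euler system is required on this branch.

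On the shock branch $p > p_K$, I would write $f_s(p,\vec U_K) = (p-p_K)\sqrt{A_K}\,(p+B_K)^{-1/2}$ and apply the product rule. After factoring out $\tfrac{1}{2}\sqrt{A_K}\,(p+B_K)^{-3/2}$, the remaining factor of $f'_s$ is the affine polynomial $p + 2B_K + p_K$, manifestly strictly positive since $A_K, B_K, p_K > 0$. A second differentiation, followed by factoring $-\tfrac{1}{4}\sqrt{A_K}\,(p+B_K)^{-5/2}$, leaves the residual polynomial $p + 4B_K + 3p_K > 0$. Thus $f'_s > 0$ and $f''_s < 0$ on this branch as well.

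Finally, I would match the two branches at $p = p_K$: substituting into the formulas above and using $A_K = 2/((\gamma+1)\rho_K)$, $B_K = (\gamma-1)p_K/(\gamma+1)$, and $a_K = \sqrt{\gamma p_K/\rho_K}$, both $f'_r(p_K)$ and $f'_s(p_K)$ collapse to $(\gamma p_K\rho_K)^{-1/2}$, with an analogous matching for $f''$. Hence $f_K \in C^{2}(0,\infty)$ and the strict sign inequalities extend to all $p > 0$. The main obstacle will be purely computational bookkeeping on the shock branch; the key trick is to peel off the negative power of $p+B_K$ as early as possible so that the residual factor is affine in $p$ with strictly positive coefficients, after which the signs are immediate.
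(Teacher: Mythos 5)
Your proof is correct. The paper itself offers no proof of this proposition, deferring to the cited references (Feistauer et al.\ and Toro), and your branchwise computation is precisely the standard verification found there: on the shock branch the factorizations $f_s'=\frac{\sqrt{A_K}}{2}(p+B_K)^{-3/2}\,(p+2B_K+p_K)$ and $f_s''=-\frac{\sqrt{A_K}}{4}(p+B_K)^{-5/2}\,(p+4B_K+3p_K)$ are exact, and at the junction both one-sided first derivatives equal $(\gamma p_K\rho_K)^{-1/2}$ while both one-sided second derivatives equal $-\frac{\gamma+1}{2}(\gamma p_K)^{-3/2}\rho_K^{-1/2}$, so the $C^2$ matching you invoke does hold and the strict inequalities extend across $p=p_K$. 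One cosmetic remark: for $\gamma\in(1,2]$ the exponent is $\alpha=(\gamma-1)/(2\gamma)\in(0,1/4]$, not merely $(0,1/2]$; since your argument only uses $0<\alpha<1$, nothing breaks.
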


\subsubsection{Estimates}
\noindent We start by studying the solution inside the rarefaction fan.
\begin{lemma} \label{lemma:CRW}
	Assume $\vec U_L$ and $\vec U_R$ are connected with the left rarefaction.
	Denote $$\vec U_{\it Lfan}: =  (\rho_*, \rho_* u_*, E(\rho_*,u_*,p_*))^T$$ as the value inside left rarefaction fan.
	Then it holds
	\begin{align}
		& \rho_L \geq \rho_* \geq \rho_R,  \quad
		  u_L \leq u_* \leq u_R,  \quad
		 p_L \geq p_* \geq p_R.
	\end{align}
\end{lemma}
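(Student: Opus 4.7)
The plan is to use the self-similar structure of a $1$-rarefaction wave together with the two algebraic constraints that are preserved across it, and then read off the claimed inequalities as the monotonicity of explicit functions of the similarity variable. Throughout the fan the state depends only on $\xi := x/t$, and points of the fan are characterised by the $1$-characteristic relation $u - a = \xi$, where $a = \sqrt{\gamma p/\rho}$ is the sound speed. In addition, the $1$-Riemann invariant $u + \frac{2a}{\gamma-1}$ is conserved across the left rarefaction (so equal to its value $u_L + \frac{2a_L}{\gamma-1}$ at the left edge), and the specific entropy $s := p/\rho^{\gamma}$ is conserved (so equal to $p_L/\rho_L^{\gamma}$).

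Solving the two-equation system $u - a = \xi$ and $u + \frac{2a}{\gamma-1} = u_L + \frac{2a_L}{\gamma-1}$ for $a$ and $u$ as functions of $\xi$, a short calculation yields
\[
a(\xi) = \frac{\gamma-1}{\gamma+1}\bigl(u_L - \xi\bigr) + \frac{2}{\gamma+1}\, a_L,
\]
which is strictly decreasing in $\xi$ (since $\gamma - 1 > 0$), and correspondingly $u(\xi) = \xi + a(\xi)$, which is strictly increasing with slope $\tfrac{2}{\gamma+1}$. Using the isentropic relation one obtains $\rho(\xi) = \rho_L \bigl(a(\xi)/a_L\bigr)^{2/(\gamma-1)}$ and $p(\xi) = p_L\bigl(a(\xi)/a_L\bigr)^{2\gamma/(\gamma-1)}$; since both exponents are positive and $a(\xi)$ is decreasing, $\rho$ and $p$ are both strictly decreasing in $\xi$ as well.

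Next I would identify the two endpoints of the fan. The left edge sits at $\xi = u_L - a_L$, and direct substitution into the formulas above reproduces $(\rho, u, p) = (\rho_L, u_L, p_L)$. The right edge sits at $\xi = u_R - a_R$; the value at this edge is the state to the right of the rarefaction, and by the joint constancy of the Riemann invariant and the entropy it must coincide with $(\rho_R, u_R, p_R)$. For $\xi$ varying in $[u_L - a_L,\, u_R - a_R]$ the monotonicity of $\rho(\xi), u(\xi), p(\xi)$ then yields precisely $\rho_L \geq \rho_* \geq \rho_R$, $u_L \leq u_* \leq u_R$ and $p_L \geq p_* \geq p_R$.

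The only delicate points are checking that $\gamma \in (1,2]$ forces all relevant exponents to be positive, that $a(\xi)$ stays strictly positive across the entire fan (so the fractional powers are well-defined and their monotonicity transfers to $\rho$ and $p$), and that the edge ordering $u_L - a_L \leq u_R - a_R$ actually holds under the assumption that the left wave is a genuine rarefaction (as opposed to a compressive shock, where the inequality would reverse). Once these checks are in place, the argument reduces to a one-variable monotonicity statement on $[u_L - a_L,\, u_R - a_R]$, which is the main substance of the proof and the only place where care is required.
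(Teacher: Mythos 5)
Your proposal is correct, but it travels a genuinely different road than the paper. The paper's proof stays entirely on the algebraic side of the exact Riemann solver: it takes the pressure ordering $p_L \geq p_* \geq p_R$ across the fan as given from the wave structure, converts it into the velocity ordering via $u_* = u_L - f_r(p_*, \vec U_L)$ together with $f_r(p_L,\vec U_L) = 0$ and $f_r' > 0$ (Proposition~3.2), and then into the density ordering via constancy of $S = \ln(p/\rho^\gamma)$ along the isentrope. You instead parametrize the interior of the fan by the similarity variable $\xi = x/t$, solve the pair $u - a = \xi$, $u + \frac{2a}{\gamma-1} = u_L + \frac{2a_L}{\gamma-1}$ to get the explicit affine formula for $a(\xi)$ (which is correct, as are the resulting monotonicities of $u,\rho,p$ in $\xi$), and read the three orderings off as one-variable monotonicity on $[u_L - a_L,\, u_R - a_R]$. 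What your route buys is self-containedness: you \emph{prove} the pressure ordering inside the fan that the paper simply asserts from the hypothesis, and you get all three chains of inequalities from a single mechanism. What it costs is the extra bookkeeping you correctly flag; note that the edge ordering you leave as a check follows at once from the same Riemann invariant, since $a_R - a_L = -\frac{\gamma-1}{2}(u_R - u_L)$ gives
\begin{equation*}
(u_R - a_R) - (u_L - a_L) = \frac{\gamma+1}{2}\,(u_R - u_L) \geq 0,
\end{equation*}
where $u_R \geq u_L$ holds because $p_R \leq p_L$ and $f_r$ is increasing with $f_r(p_L,\vec U_L)=0$ (exactly the paper's step), and positivity of $a(\xi)$ on the fan is immediate since $a(\xi)$ is decreasing and equals $a_R > 0$ at the right edge. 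One stylistic remark: the paper's $f_r$-based phrasing is deliberately chosen to match the case analysis of the subsequent Lemmas~3.4--3.7, which are all written in terms of $f_r$ and $f_s$, so while your argument is sound and arguably more transparent, the paper's version integrates more smoothly with what follows.
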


\begin{proof}
	Since $\vec U_L$ and $\vec U_R$ are connected with the left rarefaction,
	then it holds
	\begin{align*}
		&  p_L \geq p_* \geq p_R, \quad 
		u_* = u_L - f_r(p_*, \vec U_L), \quad  u_R = u_L - f_r(p_R, \vec U_L),
	\end{align*}
	where $f_r$ is defined in \eqref{eq:RPsolver-frarefaction}.
	From $p_L \geq p_* \geq p_R$ and $f_r(p_L,\vec U_L) = 0, ~f_r' > 0$ we have
	\begin{equation*}
		u_L \leq u_* \leq u_R.
	\end{equation*}
	On the other hand, with the Riemann Invariant $S:=\ln(p/\rho^{\gamma})$ we have
	\begin{equation*}
		\rho_L \geq \rho_* \geq \rho_R,
	\end{equation*}
	which concludes the proof.
\end{proof}

Consequently, the value inside left-rarefaction-fan ($\vec V_{Lfan}$) can be bounded by the left and right side values ($\vec V_L,\vec V_R$).
Clearly, it is also true for the right rarefaction wave.
Thus, $\| \vec V_{\it fan}-\vec V_K\|$ can be controlled by $\|\vec V_{*M} - \vec V_K\|$.
Hence, we concentrate on  $\|\vec V_{*M} - \vec V_K\|$
and analyze it case by case:
\begin{multicols}{2}
\begin{itemize}
	\item[(a)] left rarefaction, right rarefaction;
	\item[(b)] left shock, right shock;
	\item[(c)] left rarefaction, right shock;
	\item[(d)] left shock, right rarefaction.
\end{itemize}
\end{multicols}

\begin{lemma}[Left rarefaction, right rarefaction]\label{lemma:RPRR}
	Assume the initial data $\vec U_L, \vec U_R$ generate left and right rarefaction waves.
	Then it holds
	\begin{align}
		& |u_* - u_K| \leq \defjump{u},  \label{lemma:RPRR:jumpu}\\
		& 0 \leq p_K - p_{*}  \leq \rho_K a_K  \defjump{u}, \label{lemma:RPRR:jumpp}\\
		&  |\rho_{*K} - \rho_{M}| \leq \rho_K a_K^{-1}   \defjump{u} + |\defjump{\rho}|, \label{lemma:RPRR:jumprho}
	\end{align}
	where $K = L, R$ and $M = L, R$.
\end{lemma}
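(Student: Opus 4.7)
The plan is to combine the explicit star-region identities from the exact Riemann solver with the Riemann invariants across each rarefaction, and to extract the three bounds via elementary mean-value arguments. Throughout, Lemma~\ref{lemma:lemma-U-bounded} supplies the uniform upper and lower bounds on $\rho,p,a$ needed to produce uniform constants. The first estimate \eqref{lemma:RPRR:jumpu} is essentially monotonicity: applying Lemma~\ref{lemma:CRW} separately to each wave gives the ordering $u_L \leq u_* \leq u_R$, hence $u_* - u_L$ and $u_R - u_*$ are non-negative and bounded by $\defjump{u} = u_R - u_L \geq 0$.

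For \eqref{lemma:RPRR:jumpp} the non-negativity $p_K - p_* \geq 0$ again follows from the two-rarefaction ordering $p_* \leq \min(p_L,p_R)$. The quantitative bound comes from the star-region identity $u_* = u_L - f_r(p_*,\vec U_L)$ and its right-wave counterpart, which together with \eqref{eq:RPsolver-frarefaction} read
\begin{equation*}
|u_* - u_K| = \frac{2 a_K}{\gamma - 1}\bigl[\,1 - (p_*/p_K)^{(\gamma-1)/(2\gamma)}\,\bigr],\qquad K = L,R.
\end{equation*}
Applying the mean value theorem to $p \mapsto (p/p_K)^{(\gamma-1)/(2\gamma)}$ on $[p_*,p_K]$ and using $(p_\xi/p_K)^{-(\gamma+1)/(2\gamma)} \geq 1$ together with $a_K^2 = \gamma p_K/\rho_K$, one upgrades this to $|u_* - u_K| \geq (p_K - p_*)/(\rho_K a_K)$. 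Combining with \eqref{lemma:RPRR:jumpu} yields $p_K - p_* \leq \rho_K a_K \defjump{u}$.

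For \eqref{lemma:RPRR:jumprho} the plan is first to apply the triangle inequality
\begin{equation*}
|\rho_{*K} - \rho_M| \leq |\rho_{*K} - \rho_K| + |\rho_K - \rho_M|,
\end{equation*}
where the second term is either $0$ or $|\defjump{\rho}|$. For the first term, rather than composing the bound on $p_K - p_*$ with the isentropic relation $\rho_{*K}/\rho_K = (p_*/p_K)^{1/\gamma}$ (which, via $1 - x^{1/\gamma} \leq 1 - x$ on $[0,1]$, would introduce a spurious factor of $\gamma$), the argument routes through the acoustic Riemann invariant $u \pm 2a/(\gamma-1)$ together with the isentropic relation $a_*/a_K = (\rho_{*K}/\rho_K)^{(\gamma-1)/2}$. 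These give
\begin{equation*}
|u_* - u_K| = \frac{2 a_K}{\gamma - 1}\bigl[\,1 - (\rho_{*K}/\rho_K)^{(\gamma-1)/2}\,\bigr],
\end{equation*}
and a mean value estimate on $y \mapsto y^{(\gamma-1)/2}$ over $[\rho_{*K}/\rho_K,1]$, exploiting $(\gamma-3)/2 < 0$, delivers $\rho_K - \rho_{*K} \leq \rho_K a_K^{-1} |u_* - u_K| \leq \rho_K a_K^{-1} \defjump{u}$.

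The main obstacle is matching the correct exponents in the density step: the direct isentropic chain through pressure loses a factor of $\gamma$, and the clean bound stated in \eqref{lemma:RPRR:jumprho} requires routing through the velocity via the acoustic Riemann invariant. Everything else reduces to the two closed-form exact-solver formulas and routine mean-value estimates on smooth profiles, with uniform constants provided by Lemma~\ref{lemma:lemma-U-bounded}.
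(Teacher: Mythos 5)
Your proposal is correct and follows essentially the same route as the paper: velocity ordering from monotonicity of $f_r$ with $f_r(p_K,\vec U_K)=0$, mean-value estimates along the isentrope for the pressure and density bounds (your closed-form identity $|u_*-u_K|=\frac{2a_K}{\gamma-1}\bigl[1-(p_*/p_K)^{(\gamma-1)/(2\gamma)}\bigr]$ is algebraically identical to the paper's combination of the invariant $a_K-a_{*K}=\frac{\gamma-1}{2}|u_*-u_K|$ with $a=\gamma^{1/2}e^{S/(2\gamma)}p^{(\gamma-1)/(2\gamma)}$, and your density step via $y\mapsto y^{(\gamma-1)/2}$ with $(\gamma-3)/2<0$ is exactly the paper's second computation), followed by the same triangle inequality for $M\neq K$. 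Your side remark that the naive chain through $\rho\propto p^{1/\gamma}$ would lose a factor of $\gamma$ correctly identifies why the density bound is routed through the acoustic Riemann invariant, precisely as the paper does.
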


\begin{proof}
	Since the initial data $\vec U_L, \vec U_R$ generate left and right rarefaction waves, we have
	\begin{align}
		& p_* \leq p_L, \quad p_* \leq p_R \label{eq:RPRR-condition-p}
	\end{align}
	and
	\begin{equation}
	u_* = u_L - f_r(p_*,\vec U_L), \quad u_* = u_R + f_r(p_*,\vec U_R).
	\end{equation}
	Because of $f_r(p_K, \vec U_K) = 0, (K = L,R)$ and $f'_{r} > 0$,  it holds
	\begin{equation}
	u_* \geq u_L - f_r(p_L,\vec U_L) = u_L, \quad u_* \leq u_R + f_r(p_R,\vec U_R) = u_R, \label{eq:RPRR-condition-u}
	\end{equation}
	which gives
	\begin{equation}
	0 \leq u_* - u_L \leq \defjump{u}, \quad 0 \leq u_R - u_* \leq \defjump{u}.
	\end{equation}

	Consider the left rarefaction wave.
	The Riemann Invariant
	\begin{equation*}
		u_L + \frac{2a_L}{\gamma-1} = u_* + \frac{2a_{*L}}{\gamma-1}
	\end{equation*}
	gives
	\begin{align} \label{eq:RPRR:IR-a}
		& 0 \leq a_{L} - a_{*L} = \frac{\gamma-1}{2} (u_*-u_L)  \leq \frac{\gamma-1}{2} \defjump{u}.
	\end{align}
	On the other hand, with the help of the Riemann Invariant $S$ we have
	{\small
	\begin{align*}
		& a_{L} - a_{*L}
		= \gamma^{1/2} e^{S_L/(2\gamma)} \left(p_L^{(\gamma-1)/(2\gamma)} - p_*^{(\gamma-1)/(2\gamma)} \right)
		 = \gamma^{1/2} e^{S_L/(2\gamma)}   \frac{\gamma-1}{2\gamma}  p_1^{-(\gamma+1)/(2\gamma)}  (p_L-p_*)\\
\geq 	&~  \gamma^{1/2} e^{S_L/(2\gamma)}  \cdot \frac{\gamma-1}{2\gamma}  \cdot p_L^{-(\gamma+1)/(2\gamma)} ) (p_L-p_*)
		 =  \frac{\gamma^{1/(\gamma-1)}  (\gamma-1)}2  e^{S_L/(\gamma-1)}  a_L^{-(\gamma+1)/(\gamma-1)}  (p_L-p_*)
	\end{align*}	}%
and
	{\small
	\begin{align*}
		&	a_{L} - a_{*L}
		= \gamma^{1/2} e^{S_L/2} \left( \rho_L^{(\gamma-1)/2} -  \rho_{*L}^{(\gamma-1)/2} \right)
		 = \gamma^{1/2} e^{S_L/2}\frac{\gamma-1}{2}  \rho_1^{(\gamma-3)/2}   ( \rho_L - \rho_{*L}) \\
\geq	&~  \gamma^{1/2} e^{S_L/2}\frac{\gamma-1}{2}  \rho_L^{(\gamma-3)/2}   ( \rho_L - \rho_{*L})
		 	= \frac{\gamma^{1/(\gamma-1)} (\gamma-1) }2  e^{S_L/(\gamma-1)} a_L^{(\gamma-3)/(\gamma-1)}   ( \rho_L - \rho_{*L}),
	\end{align*}}%
	where $p_1 \in (p_*, p_L), ~ \rho_1 \in (\rho_{*L}, \rho_L)$.
	Hence, we obtain
	\begin{align*}
		& 0 \leq p_L - p_{*}
		\leq \gamma^{-1/(\gamma-1)}  \cdot e^{-S_L/(\gamma-1)}  a_L^{(\gamma+1)/(\gamma-1)} \defjump{u} = \rho_L a_L \defjump{u}, \\
		&0 \leq \rho_L - \rho_{*L}
		\leq  \gamma^{-1/(\gamma-1)} e^{-S_L/(\gamma-1)} a_L^{(3-\gamma)/(\gamma-1)}  \defjump{u} = \rho_L a_L^{-1} \defjump{u}.
	\end{align*}
	
	Analogously, for the right rarefaction wave  we obtain
	\begin{align*}
		& 0 \leq p_R - p_{*}  \leq \rho_R a_R  \defjump{u}, \quad
		 0 \leq \rho_R - \rho_{*R}  \leq \rho_R a_R^{-1}  \defjump{u}.
	\end{align*}
	Consequently,
	\begin{align*}
		& |\rho_{R} - \rho_{*L}| \leq |\rho_{R} - \rho_{L}| + |\rho_{L} - \rho_{*L}| \leq \rho_L a_L^{-1}  \defjump{u} + |\defjump{\rho}|, \\
		& |\rho_{L} - \rho_{*R}| \leq |\rho_{R} - \rho_{L}| + |\rho_{R} - \rho_{*R}| \leq \rho_R a_R^{-1}  \defjump{u} + |\defjump{\rho}|,
	\end{align*}
	which concludes the proof.
\end{proof}

\begin{lemma}[Left shock, right shock]\label{lemma:RPSS}
	Assume the initial data $\vec U_L, \vec U_R$ generate left and right shock waves.
	Then it holds
	\begin{align}
		& |u_* - u_K| < |\defjump{u}|,  \label{lemma:RPSS:jumpu}\\
		& 0 < p_* - p_{K}  <  \left(\gamma\rho_K |\defjump{u}|+ \rho_Ka_K\right)  |\defjump{u}|, \label{lemma:RPSS:jumpp}\\
		&  |\rho_{*K} - \rho_{M}| < a_K^{-2} \left(\gamma\rho_K |\defjump{u}|+ \rho_Ka_K\right)  |\defjump{u}| + |\defjump{\rho}|, \label{lemma:RPSS:jumprho}
	\end{align}
	where $K = L, R$ and $M = L, R$.
\end{lemma}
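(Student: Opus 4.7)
The plan is to mirror the structure of Lemma~\ref{lemma:RPRR}, but replace the rarefaction function $f_r$ with the shock function $f_s$ from \eqref{eq:RPsolver-fshock} and replace the Riemann-invariant relations with the Rankine--Hugoniot jump relations recalled in the remark after Proposition~3.1. Because the waves are now shocks, $p_* > p_L$ and $p_* > p_R$, and both branches of the star-state relation read $u_* = u_L - f_s(p_*,\vec U_L) = u_R + f_s(p_*,\vec U_R)$. Using $f_s(p_K,\vec U_K)=0$ and the strict monotonicity $f_s' > 0$ from Proposition~\ref{proposition:fk}, I immediately get $u_* < u_L$ and $u_* > u_R$, so $u_*$ lies strictly between $u_L$ and $u_R$; this yields \eqref{lemma:RPSS:jumpu}.

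For the pressure bound \eqref{lemma:RPSS:jumpp}, the key trick is to square the relation $u_L - u_* = (p_*-p_L)\sqrt{A_L/(p_*+B_L)}$ and eliminate $p_*$ from the radical. Substituting $A_L = 2/((\gamma+1)\rho_L)$, $B_L = (\gamma-1)p_L/(\gamma+1)$, and $a_L^2 = \gamma p_L/\rho_L$, I can rewrite $(p_*+B_L)/A_L = \tfrac{\gamma+1}{2}\rho_L(p_*-p_L) + \rho_L^2 a_L^2$. The squared relation then becomes a quadratic in $X:=p_*-p_L$,
\begin{equation*}
X^2 - \tfrac{\gamma+1}{2}\rho_L(u_L-u_*)^2\,X - \rho_L^2 a_L^2 (u_L-u_*)^2 = 0,
\end{equation*}
whose positive root, after the elementary inequality $\sqrt{c^2+4d}\le c + 2\sqrt{d}$, gives $p_*-p_L \le \tfrac{\gamma+1}{2}\rho_L(u_L-u_*)^2 + \rho_L a_L |u_L-u_*|$. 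Combining with \eqref{lemma:RPSS:jumpu} and the elementary inequality $\tfrac{\gamma+1}{2}\le\gamma$ valid for $\gamma\ge 1$ produces the desired bound; the case $K=R$ is symmetric.

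For the density bound \eqref{lemma:RPSS:jumprho} I start from the Rankine--Hugoniot identity $\rho_{*L}/\rho_L = (\pi+\beta)/(\beta\pi+1)$ with $\pi:=p_*/p_L\ge 1$ and $\beta:=(\gamma-1)/(\gamma+1)$, giving $\rho_{*L}-\rho_L = \rho_L(1-\beta)(\pi-1)/(\beta\pi+1)$. Since $\pi\ge 1$ we have $\beta\pi+1\ge 1+\beta$, and $(1-\beta)/(1+\beta)=1/\gamma$, so $\rho_{*L}-\rho_L \le (p_*-p_L)/(\gamma\vartheta_L) = (p_*-p_L)/a_L^2$. Substituting the pressure bound just obtained yields $|\rho_{*L}-\rho_L| \le a_L^{-2}(\gamma\rho_L|\defjump{u}|+\rho_L a_L)|\defjump{u}|$, and the cross-case $|\rho_{*L}-\rho_R|$ follows from the triangle inequality $|\rho_{*L}-\rho_R|\le|\rho_{*L}-\rho_L|+|\defjump{\rho}|$; the argument for $\rho_{*R}$ is entirely parallel.

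The only nontrivial step is the pressure estimate: one must recognize that squaring and treating $p_*-p_K$ as the unknown in a quadratic is the cleanest way to avoid an implicit bound, and then use $\sqrt{c^2+4d}\le c+2\sqrt d$ to separate the linear ($\rho_K a_K |\defjump{u}|$) and quadratic ($\gamma \rho_K |\defjump{u}|^2$) contributions. Everything else is monotonicity of $f_s$, a short manipulation of the density jump formula, and the triangle inequality, exactly in the spirit of Lemma~\ref{lemma:RPRR}.
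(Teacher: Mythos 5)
Your proposal is correct, and for two of the three estimates it coincides with the paper's own argument: the velocity bound via $f_s(p_K,\vec U_K)=0$ and $f_s'>0$ is identical, and your density computation with $\pi=p_*/p_K$, $\beta=(\gamma-1)/(\gamma+1)$ is the same estimate the paper performs in the additive form $\rho_{*R}-\rho_R=\big[2\rho_R/((\gamma-1)p_*+(\gamma+1)p_R)\big](p_*-p_R)\le a_R^{-2}(p_*-p_R)$, just written multiplicatively. The genuine difference is the pressure step. The paper goes through the shock speed: from the Rankine--Hugoniot relations it writes $p_*-p_R=\rho_R(s_R-u_R)(u_*-u_R)$, bounds $s_R<u_R+a_R(p_*/p_R)^{1/2}$, and arrives at a quadratic inequality whose unknown is the pressure ratio $(p_*/p_R)^{1/2}$. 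You instead square the relation $u_K-u_*=\pm f_s(p_*,\vec U_K)$ directly and, after verifying the identity $(p_*+B_K)/A_K=\tfrac{\gamma+1}{2}\rho_K(p_*-p_K)+\rho_K^2a_K^2$ (which is correct, since $\rho_K^2a_K^2=\gamma\rho_Kp_K$), obtain a quadratic whose unknown is $p_*-p_K$ itself, resolved by $\sqrt{b^2+4c}\le b+2\sqrt{c}$ together with $\tfrac{\gamma+1}{2}\le\gamma$. Both routes reduce to solving a quadratic, but yours has two small advantages: it never invokes the shock-speed formula, and it produces the stated constant $\left(\gamma\rho_K|\defjump{u}|+\rho_Ka_K\right)$ directly --- whereas the paper's displayed intermediate bound $(p_*/p_R)^{1/2}<\rho_Ra_R|\defjump{u}|/p_R+2$, if used literally, yields the coefficient $2\rho_Ka_K$ on the linear term, and one must instead use the sharper estimate $\tfrac12\left(c+\sqrt{c^2+4}\right)\le c+1$ to recover the claimed bound. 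Your strictness and positivity claims, and the triangle-inequality treatment of the cross terms $|\rho_{*K}-\rho_M|$ with $M\neq K$, match the paper exactly.
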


\begin{proof}
	Since the initial data $\vec U_L, \vec U_R$ generate left and right shock waves,
	we have
	\begin{align}
		& p_* > p_L, \quad p_* > p_R, \label{eq:RPSS-condition-p} 
	\end{align}
	where $s_L, s_R$ are the velocities of the left and right shocks respectively.
	According to
	\begin{equation*}
		u_* = u_L - f_s(p_*,\vec U_L), \quad u_* = u_R + f_s(p_*,\vec U_R)
	\end{equation*}
	and $f_s(p_K,\vec U_K) = 0, (K = L,R), ~ f'_{s} > 0$,  we obtain
	\begin{equation}
	u_* < u_L- f_s(p_L,\vec U_L) = u_L, \quad u_* > u_R + f_s(p_*, \vec U_R) = u_R, \label{eq:RPSS-condition-u}
	\end{equation}
	which means
	\begin{equation}
	0 < u_L - u_* < -\defjump{u}, \quad 0 < u_* - u_R < -\defjump{u}.
	\end{equation}

	Consider the right shock wave.
	With $u_* = u_R + f_s(p_*, \vec U_R)$ and
	\begin{equation*}
		s_R = u_R + a_R \left[ \frac{(\gamma+1)}{2\gamma}\frac{p_*}{p_R} + \frac{(\gamma-1)}{2\gamma} \right]^{1/2} <  u_R + a_R \left(\frac{p_*}{p_R} \right)^{1/2},
	\end{equation*}
	consequently we derive after some algebraic manipulations
	\begin{align*}
		p_* - p_R
		& = \left[ \frac{ (\gamma+1)p_* + (\gamma-1)p_R}{  2/\rho_{R} } \right]^{1/2}  (u_* - u_R ) = \rho_R(s_R-u_R)(u_* - u_R )  \\
		& <  \rho_R(s_R-u_R) |\defjump{u}|
		 < \rho_Ra_R |\defjump{u}|\left(\frac{p_*}{p_R} \right)^{1/2}.
	\end{align*}
	Thus,
	\begin{equation*}
		\frac{p_*}{p_R}  - \frac{\rho_Ra_R |\defjump{u}|}{p_R} \left(\frac{p_*}{p_R} \right)^{1/2} - 1 <0,
	\end{equation*}
	which gives
	\begin{align*}
		\left(\frac{p_*}{p_R} \right)^{1/2}
		& < \frac{1}{2} \left[ \frac{\rho_Ra_R |\defjump{u}|}{p_R}  + \sqrt{\left(\frac{\rho_Ra_R |\defjump{u}|}{p_R} \right)^2+4} \right]
		 < \frac{\rho_Ra_R |\defjump{u}|}{p_R} + 2.
	\end{align*}
	Hence, we obtain
	\begin{align*}
		p_* - p_R
		 <  \left(\gamma\rho_R |\defjump{u}|+ \rho_Ra_R\right) |\defjump{u}|.
	\end{align*}
	On the other hand, using
	\begin{equation*}
		\rho_{*R} = \rho_{R} \left[ \frac{  \frac{p_*}{p_R} + \frac{\gamma-1}{\gamma+1} }{ \frac{\gamma-1}{\gamma+1}\frac{p_*}{p_R} + 1 } \right]
	\end{equation*}
	we obtain
	\begin{align*}
		\rho_{*R} - \rho_{R}
		& =   \left[ \frac{  2\rho_{R} }{ (\gamma-1)p_* + (\gamma+1)p_R} \right]  (p_* - p_R)  < \left[ \frac{  2\rho_{R} }{ (\gamma-1)p_R + (\gamma+1)p_R} \right]  (p_* - p_R)  \\
		& = a_R^{-2}  (p_* - p_R) \leq a_R^{-2} \left(\gamma\rho_R |\defjump{u}|+ \rho_Ra_R\right)  |\defjump{u}|,
	\end{align*}
	which yields $\rho_{*R} - \rho_{R}   > 0$.

	Analogously analyzing the left shock wave we obtain
	\begin{align*}
		&0 < p_* - p_L < \left(\gamma\rho_L |\defjump{u}|+ \rho_La_L\right)  |\defjump{u}|,  \\
		&0 < \rho_{*L} - \rho_{L}  < a_L^{-2} \left(\gamma\rho_L |\defjump{u}|+ \rho_La_L\right) |\defjump{u}|.
	\end{align*}
	Further, we have
	\begin{align*}
		& | \rho_{*L} - \rho_{R} | < a_L^{-2}  \left(\gamma\rho_L |\defjump{u}|+ \rho_La_L\right)  |\defjump{u}| + |\defjump{\rho}|,\\
		& | \rho_{*R} - \rho_{L} | < a_R^{-2}  \left(\gamma\rho_R |\defjump{u}|+ \rho_Ra_R\right)  |\defjump{u}| + |\defjump{\rho}|,
	\end{align*}
	which concludes the proof.
\end{proof}

\begin{lemma}[Left rarefaction, right shock]\label{lemma:RPRS}
	Assume the initial data $\vec U_L, \vec U_R$ generate left rarefaction waves and right shock waves.
	Then it holds
	\begin{align}
		& 0 \leq u_* - u_K < (\rho_Ra_R)^{-1} |\defjump{p}| + |\defjump{u}|,  \label{lemma:RPRS:jumpu}\\
		& |p_K - p_{*} | < |\defjump{p}|, \label{lemma:RPRS:jumpp}\\
		&  |\rho_{*K} - \rho_{M}| \leq \left(p_K/p_R \right)^{(\gamma-1)/\gamma} a_K^{-2}  |\defjump{p}| + |\defjump{\rho}|, \label{lemma:RPRS:jumprho}
	\end{align}
	where $K = L, R$ and $M = L, R$.
\end{lemma}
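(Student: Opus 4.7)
The plan is to mimic the pattern established in Lemmas~\ref{lemma:RPRR} and~\ref{lemma:RPSS}: exploit the rarefaction relations on the left wave and the Rankine--Hugoniot/shock formulas on the right wave, then combine them by the triangle inequality. The starting observation is that a left rarefaction forces $p_* \le p_L$ while a right shock forces $p_* > p_R$, so
\begin{equation*}
p_R < p_* \le p_L, \qquad |\defjump{p}| = p_L - p_R,
\end{equation*}
which gives both $p_L - p_* \le |\defjump{p}|$ and $p_* - p_R \le |\defjump{p}|$, i.e.\ \eqref{lemma:RPRS:jumpp} for $K=L$ and $K=R$ simultaneously.

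For the velocity, first bound $u_* - u_R = f_s(p_*,\vec U_R)$ using \eqref{eq:RPsolver-fshock}. Since $p_* > p_R$ and $A_R/(p+B_R)$ is decreasing in $p$, a direct computation using the explicit form of $A_R,B_R$ yields
\begin{equation*}
u_* - u_R = (p_* - p_R)\sqrt{\tfrac{A_R}{p_* + B_R}} \le |\defjump{p}|\sqrt{\tfrac{A_R}{p_R + B_R}} = \tfrac{|\defjump{p}|}{\rho_R a_R}.
\end{equation*}
Since $f_r(p_*,\vec U_L) \le 0$ and $u_* = u_L - f_r(p_*,\vec U_L)$, we have $u_* \ge u_L$ and also $u_* > u_R$, so both differences are non-negative. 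Writing $u_* - u_L = (u_* - u_R) + \defjump{u}$ and applying the triangle inequality delivers \eqref{lemma:RPRS:jumpu} for either choice of $K$.

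For the density, I would treat the two sides by their respective constitutive relations. On the left, the Riemann invariant $S_L = S_{*L}$ gives $\rho_{*L} = \rho_L (p_*/p_L)^{1/\gamma}$; applying the mean value theorem to $x \mapsto x^{1/\gamma}$ on the interval $(p_*/p_L,1)$ produces
\begin{equation*}
\rho_L - \rho_{*L} = \tfrac{\rho_L}{\gamma p_L}\, \xi^{(1-\gamma)/\gamma}(p_L - p_*)
\le a_L^{-2} (p_L/p_R)^{(\gamma-1)/\gamma}\,|\defjump{p}|,
\end{equation*}
where we used $\rho_L/(\gamma p_L) = a_L^{-2}$ and the fact that $\xi \ge p_*/p_L \ge p_R/p_L$. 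On the right, the Rankine--Hugoniot formula for $\rho_{*R}$ combined with $p_* \ge p_R$ (exactly as in the proof of Lemma~\ref{lemma:RPSS}) yields $\rho_{*R} - \rho_R \le a_R^{-2}(p_* - p_R) \le a_R^{-2}|\defjump{p}|$, matching the stated bound since $(p_R/p_R)^{(\gamma-1)/\gamma} = 1$. The cross-cases $|\rho_{*L} - \rho_R|$ and $|\rho_{*R} - \rho_L|$ are then handled by the triangle inequality, inserting $\pm\rho_L$ or $\pm\rho_R$ to pick up the extra $|\defjump{\rho}|$.

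The main obstacle is the density estimate for $\rho_{*L}$: unlike the pure rarefaction case of Lemma~\ref{lemma:RPRR}, here the differentiation has to be performed in the pressure variable rather than along the characteristic $u$-curve, and the resulting mean-value point $\xi$ must be bounded using the condition from the \emph{other} wave (namely $p_* \ge p_R$) in order to produce the factor $(p_L/p_R)^{(\gamma-1)/\gamma}$. Once this mixed use of both one-sided conditions is set up cleanly, the rest of the calculation is bookkeeping.
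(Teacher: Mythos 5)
Your proposal is correct and follows essentially the same route as the paper's proof: the same one-sided pressure ordering $p_R < p_* \le p_L$, the same bound $u_*-u_R = f_s(p_*,\vec U_R) \le (\rho_R a_R)^{-1}|\defjump{p}|$ transferred to the left side via $u_*-u_L = (u_*-u_R)+\defjump{u}$, the same Rankine--Hugoniot estimate $\rho_{*R}-\rho_R \le a_R^{-2}(p_*-p_R)$, and the same mean-value argument on the isentrope for $\rho_L-\rho_{*L}$ with the mean-value point bounded below by $p_R$ using the shock-side condition. Your parametrization $\rho_{*L} = \rho_L(p_*/p_L)^{1/\gamma}$ is algebraically identical to the paper's $e^{-S_L/\gamma}p^{1/\gamma}$ form, so this is a cosmetic rather than substantive difference.
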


\begin{proof}
	Since the initial data $\vec U_L, \vec U_R$ generate left rarefaction waves and right shock waves,
	we have
	\begin{align}
	& p_* \leq p_L, \quad p_* > p_R, \quad
 		u_L - a_L  \leq  u_* - a_{*L}, \quad  u_* + a_{*R} > S_R > u_R + a_{R}
	\end{align}
	and
	\begin{equation}
	u_* = u_L - f_r(p_*,\vec U_L), \quad u_* = u_R + f_s(p_*,\vec U_R).
	\end{equation}
	This leads to
	\begin{equation}
	0 \leq p_L - p_* \leq -\defjump{p}, \quad 0 < p_* - p_R \leq -\defjump{p}
	\end{equation}
	and
	\begin{equation}
	u_* \geq u_L - f_r(p_L,\vec U_L) = u_L, \quad u_* > u_R + f_r(p_R,\vec U_R) = u_R.
	\end{equation}

	Consider the right shock wave.
	Realizing that
	\begin{equation*}
		\rho_{*R} = \rho_{R} \left[ \frac{  \frac{p_*}{p_R} + \frac{\gamma-1}{\gamma+1} }{ \frac{\gamma-1}{\gamma+1}\frac{p_*}{p_R} + 1 } \right],
	\end{equation*}
	we obtain
	\begin{align*}
		\rho_{*R} - \rho_{R}
		& =   \left[ \frac{  2\rho_{R} }{ (\gamma-1)p_* + (\gamma+1)p_R} \right]  (p_* - p_R) \\
		& < \left[ \frac{  2\rho_{R} }{ (\gamma-1)p_R + (\gamma+1)p_R} \right]  (p_* - p_R)  = a_R^{-2}  (p_* - p_R) \leq a_R^{-2} |\defjump{p}|,
	\end{align*}
	which also implies $\rho_{*R} - \rho_{R}   > 0$.
	Since $u_*$ satisfies $u_* = u_R + f_s(p_*,U_R)$, we have
	{\small
	\begin{align*}
		u_* - u_R
		& = \left[ \frac{  2/\rho_{R} }{ (\gamma+1)p_* + (\gamma-1)p_R} \right]^{1/2}  (p_* - p_R)
		 < \left[ \frac{  2/\rho_{R} }{ (\gamma+1)p_R + (\gamma-1)p_R} \right]^{1/2}  (p_* - p_R) \\
		& = (\gamma\rho_{R} p_R)^{-1/2}  (p_* - p_R) \leq (\rho_Ra_R)^{-1} |\defjump{p}|.
	\end{align*}}%

	Consider the left rarefaction wave.
	Clearly,
	\begin{align*}
		u_* - u_L & < |u_* - u_R| + |\defjump{u}| < (\rho_Ra_R)^{-1} |\defjump{p}| + |\defjump{u}|.
	\end{align*}
	On the other hand, with the help of the Riemann Invariant $S$ we have
	{\small
	\begin{align*}
		\rho_{L} - \rho_{*L}
		& = \exp^{-S_L/\gamma} (p^{1/\gamma}_L  - p^{1/\gamma}_{*} ) > 0, \\
		\rho_{L} - \rho_{*L}
		& = \exp^{-S_L/\gamma} \cdot (1/\gamma) \cdot p_1^{(1-\gamma)/\gamma} (p_L  - p_{*})
		 < \exp^{-S_L/\gamma} \cdot (1/\gamma) \cdot p_R^{(1-\gamma)/\gamma} (p_L  - p_{*})   \\
		 & < \gamma^{-1}\exp^{-S_L/\gamma}  \cdot p_R^{(1-\gamma)/\gamma} |\defjump{p}|
		 = \left(\frac{\rho_La_L^2}{\rho_Ra_R^2} \right)^{(\gamma-1)/\gamma} a_L^{-2} =  \left(\frac{p_L}{p_R} \right)^{(\gamma-1)/\gamma} a_L^{-2} |\defjump{p}|,
	\end{align*}}%
	where $p_1 \in (p_*,p_L) \subset (p_R, p_L)$.
	Further, we have
	\begin{align*}
		& | \rho_{*L} - \rho_{R} | < \left(p_L/p_R \right)^{(\gamma-1)/\gamma} a_L^{-2}  |\defjump{p}| + |\defjump{\rho}|,\\
		& | \rho_{*R} - \rho_{L} | <  a_R^{-2} |\defjump{p}|+ |\defjump{\rho}|,
	\end{align*}
	which concludes the proof.
\end{proof}

Analogously to Lemma~\ref{lemma:RPRS} the following result holds.
\begin{lemma}[Left shock, right rarefaction]\label{lemma:RPSR}
	Assume the initial data $\vec U_L, \vec U_R$ generate left shock waves and right rarefaction  waves.
	Then it holds
	\begin{align}
		& 0 \leq u_* - u_K < (\rho_La_L)^{-1} |\defjump{p}| + |\defjump{u}|,  \label{lemma:RPSR:jumpu}\\
		& |p_K - p_{*} | < |\defjump{p}|, \label{lemma:RPSR:jumpp}\\
		&  |\rho_{*K} - \rho_{M}| \leq \left(p_K/p_L \right)^{(\gamma-1)/\gamma} a_K^{-2}  |\defjump{p}| + |\defjump{\rho}|, \label{lemma:RPSR:jumprho}
	\end{align}
	where $K = L, R$ and $M = L, R$.
\end{lemma}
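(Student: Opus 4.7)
The plan is to mirror the proof of Lemma~\ref{lemma:RPRS}, swapping the roles of left and right: the shock analysis is now applied on the left and the rarefaction analysis on the right. First, I would extract the basic sign conditions from the wave configuration. Together with Proposition~\ref{proposition:fk} and the relations $u_* = u_L - f_s(p_*, \vec U_L)$ and $u_* = u_R + f_r(p_*, \vec U_R)$, the assumption of a left shock and right rarefaction forces $p_* > p_L$ and $p_* \leq p_R$, so that $|p_K - p_*| \leq |\defjump{p}|$ for $K = L, R$, which proves \eqref{lemma:RPSR:jumpp}. The monotonicity of $f_s, f_r$ then yields the sign information needed for the velocity bound.

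Next, I would handle the left shock exactly as the right shock is treated in Lemma~\ref{lemma:RPSS} and in the proof of Lemma~\ref{lemma:RPRS}. Using the Rankine--Hugoniot relations together with the upper bound $s_L < u_L - a_L (p_*/p_L)^{1/2}$ on the left shock speed, the same algebraic manipulations as before produce the one-sided estimates $|u_L - u_*| < (\rho_L a_L)^{-1}|\defjump{p}|$ and $0 < \rho_{*L} - \rho_L \leq a_L^{-2}|\defjump{p}|$. In parallel, I would handle the right rarefaction exactly as the left rarefaction in the proof of Lemma~\ref{lemma:RPRS}: invoking the Riemann invariant $S_R = \ln p - \gamma \ln \rho$ gives $\rho_R - \rho_{*R} = e^{-S_R/\gamma}(p_R^{1/\gamma} - p_*^{1/\gamma})$, and a mean-value step with intermediate pressure $p_1 \in (p_*, p_R) \subset (p_L, p_R)$ yields $0 \leq \rho_R - \rho_{*R} \leq (p_R/p_L)^{(\gamma-1)/\gamma} a_R^{-2} |\defjump{p}|$.

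Finally, assembling these four one-sided estimates via the triangle inequality together with $|\defjump{\rho}|$ and $|\defjump{u}|$ delivers the cross-density bounds \eqref{lemma:RPSR:jumprho} and the velocity bound \eqref{lemma:RPSR:jumpu}, for instance $|\rho_{*L} - \rho_R| \leq |\rho_{*L} - \rho_L| + |\defjump{\rho}|$ and $|u_* - u_R| \leq |u_* - u_L| + |\defjump{u}|$. The main obstacle is purely bookkeeping: one must verify that the factor $(p_K/p_L)^{(\gamma-1)/\gamma}$, with $p_L$ now in the denominator, is the correct reference, since the role of the lower pressure that dominates the mean-value step has shifted from $p_R$ in Lemma~\ref{lemma:RPRS} to $p_L$ here. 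No new analytic ingredient is needed beyond this symmetric reflection of the previous argument.
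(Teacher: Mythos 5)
Your proposal is correct and is precisely the paper's intended argument: the paper offers no separate proof of Lemma~\ref{lemma:RPSR}, stating only that it holds analogously to Lemma~\ref{lemma:RPRS}, and your left--right reflection (shock estimates now on the left as in Lemmas~\ref{lemma:RPSS}--\ref{lemma:RPRS}, rarefaction estimates on the right via the Riemann invariant $S$, with the smallest pressure, now $p_L$, dominating the mean-value step) is exactly that reflection, including the correct bookkeeping of the factor $\left(p_K/p_L\right)^{(\gamma-1)/\gamma}$. One small slip: the auxiliary shock-speed bound is mirrored with the wrong orientation --- the correct statement is $s_L > u_L - a_L\left(p_*/p_L\right)^{1/2}$, equivalently $u_L - s_L < a_L\left(p_*/p_L\right)^{1/2}$, not $s_L < u_L - a_L\left(p_*/p_L\right)^{1/2}$ --- but this is harmless, since, exactly as in the proof of Lemma~\ref{lemma:RPRS}, the velocity estimate needs no shock-speed information at all: $u_L - u_* = f_s(p_*,\vec U_L) < (\gamma\rho_L p_L)^{-1/2}(p_*-p_L) \leq (\rho_L a_L)^{-1}|\defjump{p}|$ follows directly from \eqref{eq:RPsolver-fshock} by replacing $p_*$ with $p_L$ in the denominator. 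Note also that in this configuration $u_* < u_L$ and $u_* \leq u_R$, so the one-sided form of \eqref{lemma:RPSR:jumpu} should read $0 \leq u_K - u_*$; your absolute-value bounds $|u_L - u_*|$, $|u_* - u_R|$ are the correct content of the mirrored estimate.
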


Combining Lemma~\ref{lemma:usefulestimation} and Lemma~\ref{lemma:RPRR} - \ref{lemma:RPSR}  we finally obtain the following bounds for the Riemann problem solution.
\begin{lemma} \label{lemma:usefulestimation2}
	Under Assumption~\ref{assumption}
	it holds
	\begin{equation}
		\|\vec U_L - \vec U^{\it RP}_{\sigma}\| \lesssim \|\defjump{\vec U_h}\|, ~ \|\vec U_R - \vec U^{\it RP}_{\sigma}\| \lesssim \|\defjump{\vec U_h}\|
	\end{equation}
	with $\sigma := L|R$.
\end{lemma}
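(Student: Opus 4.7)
The plan is to exhaust the possibilities for where the ray $x/t = 0$ sits inside the self-similar Riemann fan at the interface $\sigma = L|R$ and to bound $\|\vec U^{\it RP}_\sigma - \vec U_K\|$ in every case using Lemmas \ref{lemma:CRW}--\ref{lemma:RPSR} together with the uniform bounds of Lemma \ref{lemma:lemma-U-bounded}.

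First I would observe that $\vec U^{\it RP}_\sigma = \vec U(0;\vec U_L,\vec U_R)$ equals exactly one of the six possible states $\vec U_L, \vec U_{Lfan}, \vec U_{*L}, \vec U_{*R}, \vec U_{Rfan}, \vec U_R$, depending on which wave region contains the stationary ray. For the two rarefaction-fan values, Lemma \ref{lemma:CRW} (and the analogous statement for a right rarefaction) shows that each component of $\vec V_{\it fan}$ is trapped componentwise between the corresponding star value $\vec V_{*M}$ and the outer value $\vec V_M$. Hence $\|\vec V_{\it fan} - \vec V_K\| \le \|\vec V_{*M} - \vec V_K\| + \|\defjump{\vec V}\|$, so it suffices to bound $\|\vec V_{*M} - \vec V_K\|$ for $M, K \in \{L, R\}$ in the four wave configurations enumerated in Section \ref{section:difference}.

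Next I invoke Lemmas \ref{lemma:RPRR}, \ref{lemma:RPSS}, \ref{lemma:RPRS}, \ref{lemma:RPSR} case by case. In every case the bounds on $|u_* - u_K|$, $|p_* - p_K|$ and $|\rho_{*K} - \rho_M|$ have the schematic form (coefficient)$\cdot|\defjump{u}| + $ (coefficient)$\cdot|\defjump{p}| + $ (possibly) $|\defjump{\rho}|$, where the coefficients are polynomial expressions in $\rho_K$, $p_K$, $a_K = \sqrt{\gamma p_K/\rho_K}$ and, in the shock cases, also in $|\defjump{u}|$. By Lemma \ref{lemma:lemma-U-bounded}, the quantities $\rho_K$, $p_K$, $a_K$ are uniformly bounded from above and away from zero, and $|\defjump{u}| \le 2\overline{u}$ is uniformly bounded; therefore all coefficients are uniformly controlled and one obtains $\|\vec V_{*M} - \vec V_K\| \lesssim \|\defjump{\vec V}\|$.

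Combining the two reductions above gives $\|\vec V^{\it RP}_\sigma - \vec V_K\| \lesssim \|\defjump{\vec V_h}\|$ for $K \in \{L,R\}$. The conclusion then follows by applying Lemma \ref{lemma:usefulestimation}(1): the Lipschitz comparability $\|\vec U^{\it RP}_\sigma - \vec U_K\| \lesssim \|\vec V^{\it RP}_\sigma - \vec V_K\|$ (from smoothness of $\vec U \mapsto \vec V$ on the compact admissible set) and $\|\defjump{\vec V_h}\| \lesssim \|\defjump{\vec U_h}\|$ together yield the claimed bound. The main obstacle is merely the bookkeeping: one has to verify in each of the four wave patterns that every coefficient surfacing in Lemmas \ref{lemma:RPRR}--\ref{lemma:RPSR} remains uniformly controlled, but no additional analytic ingredient is needed beyond Lemma \ref{lemma:lemma-U-bounded}.
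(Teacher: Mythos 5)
Your proposal is correct and follows essentially the same route as the paper, which proves this lemma precisely by combining Lemma~\ref{lemma:usefulestimation} with the case-by-case star-state estimates of Lemmas~\ref{lemma:RPRR}--\ref{lemma:RPSR}, the fan-trapping argument of Lemma~\ref{lemma:CRW}, and the uniform bounds of Lemma~\ref{lemma:lemma-U-bounded} to control all coefficients (including the $|\defjump{u}|$-dependent ones in the shock cases). Your additional remarks --- enumerating the six possible locations of the stationary ray and passing from primitive to conservative variables via smoothness of $\vec V \mapsto \vec U$ on the compact admissible set --- simply make explicit the bookkeeping the paper leaves implicit.
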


\begin{remark}
Lemma~\ref{lemma:usefulestimation2} implies $\|\vec U^{\it RP}_{\sigma}\| \lesssim \|U_L\| + \|U_R\| \leq C$ with $C = C(\underline{\rho}, \overline{E})$.
\end{remark}

\subsection{Consistency}
\noindent
The aim of this section is to prove the consistency of the finite volume method \eqref{eq:semi-discrete-RPflux-phi}.
\begin{theorem}(Consistency Formulation) \label{theorem:consistency}
	\noindent Let $\vec U_h$ be the unique solution of the finite volume scheme \eqref{eq:semi-discrete-RPflux-phi} on the time interval $[0,T]$ with the initial data $\vec U_{0,h}$. Under the Assumption~\ref{assumption}
	we have the following results for all $\tau \in (0,T)$:
	\begin{itemize}
	\item for all $\phi \in C^1([0,T]\times \overline{\Omega})$
	\begin{equation}\label{eq:consistency-density}
		\left[ \int_{\Omega} \rho_h \phi ~d \vec x  \right]_{t = 0}^{t = \tau} = \int_{0}^{\tau} \int_{\Omega} \rho_h \partial_t \phi +   \vec m_h \cdot \nabla_{\vec x} \phi   ~d \vec x dt + \int_{0}^{\tau} e_{\rho,h}(t,\phi) ~dt;
	\end{equation}
	
	\item for all $\vec \phi \in C^1([0,T]\times \overline{\Omega}; \mathbb{R}^d)$
	\begin{align}\label{eq:consistency-momentum}
	\left[ \int_{\Omega} \vec m_h \vec \phi ~d \vec x  \right]_{t = 0}^{t = \tau} =
	&\int_{0}^{\tau} \int_{\Omega} \vec m_h \partial_t \vec \phi +   \frac{\vec m_h \otimes\vec m_h }{\rho_h} : \nabla_{\vec x} \vec \phi \nonumber\\
	& \hspace{1cm} + p_h  \rm{div} _{\vec x} \vec \phi   ~d \vec x dt + \int_{0}^{\tau} e_{\vec m,h}(t,\vec \phi) ~dt;
	\end{align}

	\item for all $\phi \in C^1([0,T]\times \overline{\Omega}),\, \phi \geq 0$
	\begin{equation}\label{eq:consistency-entropy}
	\left[ \int_{\Omega} \eta_h \phi ~d \vec x  \right]_{t = 0}^{t = \tau} \leq \int_{0}^{\tau} \int_{\Omega} \eta_h \partial_t \phi + \vec q_h \cdot \nabla_{\vec x} \phi ~d \vec x dt + \int_{0}^{\tau} e_{\eta,h}(t,\phi) ~dt;
	\end{equation}
	
	\item
	\begin{equation}\label{eq:consistency-energy}
		\int_{\Omega} E_h(\tau) ~d \vec x = \int_{\Omega} E_{0,h} ~d \vec x.
	\end{equation}
	\end{itemize}

	The errors $e_{j,h}, (j = \rho, \vec m, \eta)$ are bounded by
	\begin{equation}\label{eq:consistency-errors}
	\|e_{j,h}\|_{L^1(0,T)} \lesssim h^{1/2} \| \phi \|_{C^1([0,T]\times \overline{\Omega})} \left(\int_{0}^{\tau} \sum_{\sigma \in \inerface} \int_{\sigma} \|\defjump{U_h}_{\sigma}\|_2^2 ~ dS_{\vec x}  dt\right)^{1/2}.
	\end{equation}
\end{theorem}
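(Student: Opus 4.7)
My plan is to test the weak formulations \eqref{eq:semi-discrete-RPflux-phi} and \eqref{eq:semi-discrete-entropy-phi} with $\phi_h:=\Pi_h\phi$ for a smooth $\phi$, integrate in time on $[0,\tau]$, and then rewrite the resulting face sums as continuous volume integrals against $\nabla_{\vec x}\phi$ plus a consistency error that I control via Lemmas~\ref{lemma:usefulestimation} and~\ref{lemma:usefulestimation2}. First I use that $\vec U_h\in \mathcal{Q}_h$ and $\Pi_h$ is the $L^2$-orthogonal projection, so $\int_\Omega \vec U_h\,\Pi_h\phi\,d\vec x=\int_\Omega \vec U_h\,\phi\,d\vec x$ and $\partial_t\Pi_h\phi=\Pi_h\partial_t\phi$. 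Integrating the semi-discrete equation over $t\in[0,\tau]$ and performing one integration by parts in $t$ then immediately produces the $\partial_t\phi$ term and the boundary contributions at $t=0,\tau$ that sit on the left of \eqref{eq:consistency-density}--\eqref{eq:consistency-momentum}.

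The heart of the argument is a discrete integration by parts on the flux sum. For smooth $\phi$, cellwise integration by parts gives
\begin{equation*}
\int_\Omega \vec F(\vec U_h):\nabla_{\vec x}\phi\,d\vec x = -\sum_{\sigma\in\inerface}\int_\sigma\defjump{\vec F(\vec U_h)}\cdot\vec n\,\phi\,dS_{\vec x}.
\end{equation*}
Writing $\defjump{\Pi_h\phi}=((\Pi_h\phi)^{\tt out}-\phi)-((\Pi_h\phi)^{\tt in}-\phi)$ and splitting $\vec F(\vec U^{\it RP}_\sigma)=\vec F(\vec U_M)+[\vec F(\vec U^{\it RP}_\sigma)-\vec F(\vec U_M)]$ for $M\in\{K,L\}$ on each face, a direct algebraic rearrangement yields
\begin{align*}
&\sum_{\sigma}\int_\sigma \vec F(\vec U^{\it RP}_\sigma)\cdot\vec n\,\defjump{\Pi_h\phi}\,dS_{\vec x}-\int_\Omega \vec F(\vec U_h):\nabla_{\vec x}\phi\,d\vec x\\
&\quad=\sum_\sigma\int_\sigma\Bigl\{[\vec F(\vec U^{\it RP}_\sigma)-\vec F(\vec U_L)]\bigl((\Pi_h\phi)^{\tt out}-\phi\bigr)-[\vec F(\vec U^{\it RP}_\sigma)-\vec F(\vec U_K)]\bigl((\Pi_h\phi)^{\tt in}-\phi\bigr)\Bigr\}\cdot\vec n\,dS_{\vec x},
\end{align*}
since the leftover telescoping sum $\sum_\sigma[\vec F(\vec U_L)(\Pi_h\phi)^{\tt out}-\vec F(\vec U_K)(\Pi_h\phi)^{\tt in}]\cdot\vec n$ vanishes cellwise by the discrete divergence theorem.

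To close the estimate I invoke Lemma~\ref{lemma:usefulestimation2} to bound $\|\vec F(\vec U^{\it RP}_\sigma)-\vec F(\vec U_M)\|\lesssim\|\defjump{\vec U_h}_\sigma\|$, combine it with the standard projection estimate $|(\Pi_h\phi)^{\tt in/out}-\phi|_\sigma\lesssim h\|\phi\|_{C^1}$, and apply Cauchy-Schwarz face-by-face together with $\sum_{\sigma\in\inerface}|\sigma|\lesssim h^{-1}$. This produces the pointwise-in-time bound
\begin{equation*}
|e_{\rho,h}(t,\phi)|+|e_{\vec m,h}(t,\vec\phi)|\lesssim h^{1/2}\|\phi\|_{C^1}\biggl(\sum_{\sigma\in\inerface}\int_\sigma\|\defjump{\vec U_h}_\sigma\|_2^2\,dS_{\vec x}\biggr)^{1/2},
\end{equation*}
and a further Cauchy-Schwarz in $t$ reproduces \eqref{eq:consistency-errors} exactly. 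The entropy bound \eqref{eq:consistency-entropy} follows by repeating the argument on \eqref{eq:semi-discrete-entropy-phi} with $\vec q$ in place of $\vec F$; the inequality is preserved because $\phi\geq 0$ (hence $\Pi_h\phi\geq 0$) and the residual is moved to the right-hand side as $e_{\eta,h}$. Finally, \eqref{eq:consistency-energy} is immediate upon testing the energy component of \eqref{eq:semi-discrete-RPflux-phi} with $\phi\equiv 1$: $\defjump{\phi}=0$ annihilates the flux sum and no error is generated.

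The main obstacle I foresee is precisely the algebraic bookkeeping in the second step: the splittings of $\defjump{\Pi_h\phi}$ and of $\vec F(\vec U^{\it RP}_\sigma)$ must be arranged so that the "consistent" part telescopes to $-\defjump{\vec F(\vec U_h)}\cdot\vec n\,\phi$ and the residual carries the double smallness factor $\|\defjump{\vec U_h}\|\cdot h$; only then does the $h^{-1/2}$ loss coming from $\sum_\sigma|\sigma|\sim h^{-1}$ still leave the clean $h^{1/2}$ rate. The uniform $L^\infty$ bounds on $\vec U_h$ from Lemma~\ref{lemma:lemma-U-bounded}, combined with the Lipschitz control in Lemma~\ref{lemma:usefulestimation}(1), are what keep all these estimates uniform as $h\to 0$.
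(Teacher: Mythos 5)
Your proposal is correct and takes essentially the same route as the paper's proof: testing with $\Pi_h\phi$, discrete integration by parts on the flux sum, annihilating the telescoping term $\sum_{\sigma}\int_\sigma \defjump{\vec F_h\,\projection{\vec\phi}}\cdot\vec n\,dS_{\vec x}$ via the boundary conditions as in \eqref{eq:divergence-theorem}, and then bounding the residual through Lemma~\ref{lemma:usefulestimation2}, the $O(h)$ projection error, Cauchy--Schwarz and $\sum_{\sigma\in\inerface}|\sigma|\lesssim h^{-1}$. Your one-sided grouping of the residual into $[\vec F(\vec U^{\it RP}_\sigma)-\vec F(\vec U_M)]\bigl((\Pi_h\phi)^{\tt in/out}-\phi\bigr)$ is algebraically identical to the paper's split into $e_1+e_2$ written with jump/average operators, so this is a cosmetic reorganization rather than a different argument.
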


\begin{proof}
	Energy conservation for \eqref{eq:semi-discrete-RPflux-phi} follows directly by integrating the discrete energy equation in time and applying the boundary conditions.
	We proceed by proving  \eqref{eq:consistency-density} - \eqref{eq:consistency-entropy}.
	
	\textbf{Step 1}: We prove \eqref{eq:consistency-density} and \eqref{eq:consistency-momentum} by showing
	\begin{align}
	& \left[ \int_{\Omega} \vec U_h \vec \phi ~d \vec x  \right]_{t = 0}^{t = \tau} = \int_{0}^{\tau} \int_{\Omega} \vec U_h \partial_t \vec \phi +   \vec F: \nabla_{\vec x} \vec \phi   ~d \vec x dt + \int_{0}^{\tau} e_{h}(t,\phi) ~dt
	\end{align}
	for all $\vec \phi \in C^1([0,T]\times \overline{\Omega}; \mathbb{R}^{d+2})$ with
	\begin{align} \label{eq:theorem-errors-U}
		& \|e_h\|_{L^1(0,T)} \lesssim h^{1/2} \| \phi \|_{C^1([0,T]\times \overline{\Omega})} \left(\int_{0}^{T} \sum_{\sigma \in \inerface} \int_{\sigma} \|\defjump{U_h}_{\sigma}\|^2 ~ dS_{\vec x}  dt\right)^{1/2}.
	\end{align}

	Realizing that
	\begin{align}
	& \defjump{ab} = \defave{a} \defjump{b} + \defjump{a} \defave{b},
	\end{align}
	we obtain after some manipulations
	\begin{align*}
	& \int_{\Omega} \vec F_h:\nabla_{\vec x}  \vec \phi ~ d\vec x
	=  \sum_{K} \int_{K}  \vec F_h:\nabla_{\vec x}  \vec \phi ~ d\vec x\\
	=  & \sum_{K} \int_{\partial K}  \vec F_h \cdot \vec \phi  \cdot \vec n_K   ~ d\vec x
	=   - \sum_{\sigma \in \inerface} \int_{\sigma} \defjump{\vec F_h} \cdot \vec \phi \cdot \vec n~ dS_{\vec x}\\
	= &   - \sum_{\sigma \in \inerface} \int_{\sigma} \bigg(\defjump{\vec F_h} \cdot \left( \vec \phi - \defave{\projection{\vec \phi}} \right) + \defjump{\vec F_h}  \cdot  \defave{\projection{\vec \phi}} \bigg)  \cdot \vec n ~ dS_{\vec x} \\
	= &   - \sum_{\sigma \in \inerface} \int_{\sigma} \bigg(\defjump{\vec F_h} \cdot \left( \vec \phi - \defave{\projection{\vec \phi}} \right) - \defjump{\projection{\vec \phi}}  \cdot  \defave{\vec F_h}  + \defjump{\vec F_h \cdot \projection{\vec \phi}} \bigg)  \cdot \vec n ~ dS_{\vec x} \\
	= & \sum_{\sigma \in \inerface} \int_{\sigma}  \vec  F(\vec U^{\it RP}_{\sigma}) \cdot \vec n \cdot \defjump{\projection{\vec \phi}} ~ dS_{\vec x}   - \sum_{\sigma \in \inerface} \int_{\sigma} \defjump{\vec F_h}\cdot \vec n \cdot \left( \vec \phi - \defave{\projection{\vec \phi}} \right)~ dS_{\vec x} \\
	& + \sum_{\sigma \in \inerface} \int_{\sigma}  \big( \defave{\vec F_h}  -  \vec  F(\vec U^{\it RP}_{\sigma}) \big) \cdot \vec n \cdot  \defjump{\projection{\vec \phi}} ~ dS_{\vec x}.
	\end{align*}
	For the last equality, we have used the Gauss theorem and the no-flux or periodic boundary condition
	\begin{equation} \label{eq:divergence-theorem}
	\sum_{\sigma \in \inerface} \int_{\sigma} \defjump{\vec F_h \cdot \projection{\vec \phi}} \cdot \vec n ~dS_{\vec x} = \int_{\partial\Omega} \vec F_h\cdot  \projection{\vec \phi} \cdot \vec n ~dS_{\vec x}  = 0.
	\end{equation}
	
	Let us now consider the error terms
	\begin{align*}
	& e_1 = \sum_{\sigma \in \inerface} \int_{\sigma} \defjump{\vec F_h}\cdot \vec n \cdot \left( \vec \phi - \defave{\projection{\vec \phi}} \right)~ dS_{\vec x}, \\
	& e_2 = -\sum_{\sigma \in \inerface} \int_{\sigma}   \big( \defave{\vec F_h}  -  \vec  F(\vec U^{\it RP}_{\sigma}) \big) \cdot \vec n \cdot  \defjump{\projection{\vec \phi}} ~ dS_{\vec x}.
	\end{align*}
	Applying Lemma~\ref{lemma:usefulestimation}, i.e. $\| \defjump{\vec F_h} \cdot \vec n \|  \lesssim  |  \defjump{\vec U_h}|$ and the fact
	\begin{align*}
	& \| \vec \phi - \defave{ \projection{\vec \phi}} \| \lesssim h \normcp{1}{\vec \phi}
	\quad \mbox{for all } \vec x \in \sigma \in \inerface,
	\end{align*}
	we have the following estimate
	\begin{align*}
	| e_1 | & \lesssim h \normcp{1}{\vec \phi}\sum_{\sigma \in \inerface} \int_{\sigma} \| \defjump{\vec F_h}\cdot \vec n\| ~ dS_{\vec x} \lesssim h \normcp{1}{\vec \phi}\sum_{\sigma \in \inerface} \int_{\sigma}   \| \defjump{\vec U_h}\| ~ dS_{\vec x} \\
	& \lesssim h \normcp{1}{\vec \phi} \left(\sum_{\sigma \in \inerface} \int_{\sigma}  \| \defjump{\vec U_h}\|^2 ~ dS_{\vec x}\right)^{1/2}  \left(\sum_{\sigma \in \inerface}\int_{\sigma} 1 ~ dS_{\vec x} \right)^{1/2}  \\
	& \lesssim h^{1/2}\normcp{1}{\vec \phi}  \left(\sum_{\sigma \in \inerface} \int_{\sigma}   \| \defjump{\vec U_h}\|^2 ~ dS_{\vec x}\right)^{1/2}.
	\end{align*}
	Realizing that
	\begin{equation*}
		\| \defjump{ \projection{\phi}} \|  \lesssim h \normcp{1}{\vec \phi},
	\end{equation*}
	we derive
	\begin{align*}
	|e_2| &   \leq h \normcp{1}{\vec \phi} \sum_{\sigma \in \inerface} \int_{\sigma}    \big\| (\defave{\vec F_h}  -   \vec F(\vec U^{\it RP}_{\sigma}))\cdot \vec n \big\| ~ dS_{\vec x}\\
	&  \lesssim h\normcp{1}{\phi}  \sum_{\sigma:= L|R \in \inerface} \int_{\sigma}    \big\| (\vec F(\vec U_L)  -   \vec F(\vec U^{\it RP}_{\sigma}))\cdot \vec n \big\| + \big\| (\vec F(\vec U_R)  -   \vec F(\vec U^{\it RP}_{\sigma}))\cdot \vec n \big\| ~ dS_{\vec x}\\
	&  \lesssim h \normcp{1}{\vec \phi} \sum_{\sigma:= L|R \in \inerface} \int_{\sigma}  \| \vec U_L-\vec U^{\it RP}_{\sigma}\|  +   \| \vec U_R-\vec U^{\it RP}_{\sigma}\|   ~ dS_{\vec x} \\
	& \lesssim h \normcp{1}{\vec \phi} \sum_{\sigma \in \inerface} \int_{\sigma}  \| \defjump{\vec U_h}\|    ~ dS_{\vec x}
	  \lesssim h^{1/2}\normcp{1}{\vec \phi}  \sum_{\sigma \in \inerface} \int_{\sigma}   \| \defjump{\vec U_h}\|^2 ~ dS_{\vec x}.
	\end{align*}
		
	Hence, we can obtain
	\begin{align*}
	\left[ \int_{\Omega} \vec U_h \cdot \vec \phi ~d \vec x  \right]_{t = 0}^{t = \tau}
	=  & \int_{0}^{\tau} \int_{\Omega} \frac{d}{dt} (\vec U_h \cdot \vec \phi) ~d \vec x
	= \int_{0}^{\tau} \int_{\Omega} \vec U_h \cdot \partial_t \vec \phi + \vec \phi \cdot \frac{d}{dt} \vec U_h ~d {\vec x} dt  \\
	= & \int_{0}^{\tau} \int_{\Omega} \vec U_h \cdot \partial_t \vec \phi ~d {\vec x} dt  +   \int_{0}^{\tau}  \sum_{\sigma \in \inerface} \int_{\sigma} \vec F(\vec U^{\it RP}_{\sigma}) \cdot \vec n \defjump{\phi} ~dS_{\vec  x} \\
	= & \int_{0}^{\tau} \int_{\Omega} \vec U_h \cdot \partial_t \vec \phi + \vec F_h : \nabla_{\vec x}  \vec \phi ~d \vec x dt + \int_{0}^{\tau} e_h(t,\vec \phi) ~dt,
	\end{align*}
	where $e_h = e_1 + e_2$ satisfies \eqref{eq:theorem-errors-U} with the help of $\|\vec U\|^2 \lesssim \|\vec U\|_2^2$.
	
	\textbf{Step 2}:
	Indeed, using the same techniques as Step 1 to analyze \eqref{eq:semi-discrete-entropy-phi} we obtain \eqref{eq:consistency-entropy}, which concludes the proof.	
\end{proof}

\section{Convergence}
\noindent
In order to keep the paper self-contained we present the definition of a dissipative measure-valued solution for the Euler system \eqref{eq:multiD-Euler}, \eqref{eq:Euler-entropy-inequality}, cf. \cite{Feireisl-Lukacova-Mizerova-She:2021}.
\begin{definition} \label{def:DMV}
Let $\Omega \subset \mathbb{R}^{d}$ be a bounded domain.
A parametrized probability measure $\{\mathcal{V}_{t,\vec x}\}_{(t,\vec x)\in (0,T)\times\Omega}$,
\begin{equation*}
\mathcal{V}_{t,\vec x} \in L^{\infty}((0,T)\times\Omega, \mathcal{P}(\mathbb{R}^{d+2})), \quad
\mathbb{R}^{d+2} =\{ (\tilde{\rho}, \tilde{\vec m}, \tilde{\eta}):  \tilde{\rho} \in \mathbb{R},~ \tilde{\vec m} \in \mathbb{R}^d, ~ \tilde{\eta} \in \mathbb{R} \}
\end{equation*}
is called a dissipative measure-valued (DMV) solution of the Euler system \eqref{eq:multiD-Euler}, \eqref{eq:Euler-entropy-inequality} with the space-periodic or no-flux boundary condition and initial condition $(\rho_0, \vec m_0, \eta_0)$ if the following holds:
\begin{itemize}
	\item (\textbf{lower bound on density and entropy})
	\begin{equation}
	\mathcal{V}_{t,\vec x}\left[ \left\{ \tilde{\rho} \geq 0, ~ \tilde{\eta} \geq \underline{S} \tilde{\rho} \right\} \right] = 1 \quad \mbox{for a.a.}~ (t,\vec x) \in (0,T)\times\Omega;
	\end{equation}
	
	\item (\textbf{energy inequlity})  the integral inequality
	\begin{equation} \label{eq:DMV-energy}
		\int_{\Omega} \langle \mathcal{V}_{\tau,\vec x};\, E(\tilde{\rho}, \tilde{\vec m}, \tilde{\eta}) \rangle ~d \vec x
		+ \int_{\Omega} d \mathfrak{E}_{cd}(\tau)
		\leq \int_{\Omega} E(\rho_0, \vec m_0, \eta_0) ~d \vec x
	\end{equation}
	holds for a.a. $0\leq \tau \leq T$ with the energy concentration defect
	\begin{equation*}
		\mathfrak{E}_{cd}\in  L^{\infty}(0,T; \mathcal{M}^+(\overline{\Omega}));
		\footnote{ $\mathcal{M}^+(\overline{\Omega})$ denotes the set of positive Radon measures on $\overline{\Omega}$.}
	\end{equation*}
	
	\item (\textbf{equation of continuity})
	\begin{equation*}
		\langle \mathcal{V}_{t,\vec x};\, \widetilde{\rho} \rangle \in C_{weak}([0,T];L^{\gamma}(\Omega)), \quad \langle \mathcal{V}_{0,\vec x};\, \widetilde{\rho} \rangle = \rho_0 \quad \mbox{for a.a.}~ \vec x \in \Omega
	\end{equation*}
	and the integral equality
	\begin{equation}
	\left[ \int_{\Omega}  \langle \mathcal{V}_{t,\vec x};\, \widetilde{\rho} \rangle \phi ~d \vec x  \right]_{t = 0}^{t = \tau} = \int_{0}^{\tau} \int_{\Omega}  \langle \mathcal{V}_{t,\vec x};\, \widetilde{\rho} \rangle \partial_t \phi +    \langle \mathcal{V}_{t,\vec x};\, \widetilde{\vec m} \rangle \cdot \nabla_{\vec x} \phi   ~d \vec x dt
	\end{equation}	
	for any $0\leq \tau \leq T$ and any $\phi \in W^{1,\infty}((0,T)\times \Omega)$;
	
	\item (\textbf{momentum equation})
	\begin{equation*}
	\langle \mathcal{V}_{t,\vec x};\, \widetilde{\vec m} \rangle \in C_{weak}([0,T];L^{\frac{2\gamma}{\gamma+1}}(\Omega; \mathbb{R}^d)), \quad \langle \mathcal{V}_{0,\vec x};\, \widetilde{\vec m} \rangle= m_0 \quad \mbox{for a.a.}~ \vec x \in \Omega
	\end{equation*}
	and the integral equality
	{\small
	\begin{align} \label{eq:DMV-momentum}
		& \left[ \int_{\Omega}  \langle \mathcal{V}_{t,\vec x};\, \widetilde{\vec m} \rangle \vec \phi ~d \vec x  \right]_{t = 0}^{t = \tau}
		 =  \int_{0}^{\tau} \int_{\Omega}  \langle \mathcal{V}_{t,\vec x};\, \widetilde{\vec m} \rangle \partial_t \vec \phi +   \langle  \mathcal{V}_{t,\vec x};\, \frac{ \widetilde{\vec m} \otimes\widetilde{\vec m} }{\widetilde{ \rho}}   \rangle : \nabla_{\vec x} \vec \phi  ~d \vec x dt \nonumber\\
		 &\hspace{2cm} + \int_{0}^{\tau} \int_{\Omega}    \langle \mathcal{V}_{t,\vec x};\,  p(\tilde{\rho},  \tilde{\eta})   \rangle \rm{div}_{\vec x} \vec \phi ~d \vec x dt
		+    \int_{0}^{\tau} \int_{\overline{\Omega}} \nabla_{\vec x} \vec \phi   : d\mathfrak{R}_{cd}(t) dt
	\end{align}}%
	for any $0\leq \tau \leq T$ and any $\vec \phi \in C^1([0,T]\times \overline{\Omega}; \mathbb{R}^d)$, ($\vec \phi$ also satisfies $\vec \phi \cdot \vec n |_{\partial \Omega} = 0$ when no-flux boundary condition is used),
	where the Reynolds concentration defect
	 \begin{equation*}
	 \mathfrak{R}_{cd} \in  L^{\infty}(0,T; \mathcal{M}^+(\overline{\Omega}; \mathbb{R}^{d\times d}_{\mbox{sym}}))
	 \end{equation*}
	 satisfies
	 \begin{equation}
	 	\underline{d} \mathfrak{E}_{cd}\leq \mbox{tr}[\mathfrak{R}_{cd}] \leq \overline{d} \mathfrak{E}_{cd} \quad \mbox{for some constants} ~ 0 < \underline{d} \leq \overline{d};
	 \end{equation}
	
	\item (\textbf{entropy balance})
	{\small
	\begin{align*}
	&& \int_{\Omega} \langle \mathcal{V}_{\tau\pm,\vec x};\, \tilde{\eta} \rangle \phi ~d \vec x  \equiv \lim\limits_{t \rightarrow \tau\pm} \int_{\Omega} \langle \mathcal{V}_{t,\vec x};\, \tilde{\eta} \rangle \phi ~d \vec x \quad \mbox{exists for any}~ 0 \leq \tau < T, \\
	&& \int_{\Omega} \langle \mathcal{V}_{0+,\vec x};\, \tilde{\eta} \rangle \phi ~d \vec x  \equiv \int_{\Omega} S_0  \phi ~d \vec x \quad \mbox{for any}~ \phi \in C(\overline{\Omega}),
	\end{align*}}%
	and the integral inequality
	{\small
	\begin{equation}
	\left[ \int_{\Omega} \langle \mathcal{V}_{t,\vec x};\, \tilde{\eta} \rangle \phi ~d \vec x   \right]^{t = \tau_2+}_{t = \tau_1-} \leq \int_{\tau_1}^{\tau_2} \int_{\Omega} \langle \mathcal{V}_{t,\vec x};\, \tilde{\eta}  \rangle \partial_t \phi + \langle \mathcal{V}_{t,\vec x};\, \frac{\tilde{\vec m}}{\tilde{\rho}} \tilde{\eta} \rangle \cdot \nabla_{\vec x} \phi ~d \vec x dt
	\end{equation}}%
	for any $0\leq \tau \leq T$ and any $\phi \in W^{1,\infty}((0,T)\times \Omega),\, \phi \geq 0$.
\end{itemize}

\end{definition}

\begin{remark}
Consider a family $\{ \rho_h, \vec m_h, E_h \}_{h \downarrow 0}$ of numerical solutions generated by our finite volume method \eqref{eq:semi-discrete-RPflux-phi}.
We note that a sequence $\{ \rho_h, \vec m_h, E_h \}_{h \downarrow 0}$ can be mapped uniquely to a sequence $\{ \rho_h, \vec m_h, \eta_h \}_{h \downarrow 0}$.
Due to Theorem~\ref{theorem:consistency} $\{ \rho_h, \vec m_h, \eta_h \}_{h \downarrow 0}$ is a consistent approximation  of complete Euler system.
Consequently, up to a subsequence $\{ \rho_{h_n}, \vec m_{h_n}, \eta_{h_n} \}_{h_n \downarrow 0}$ generates the Young measure  $\{\mathcal{V}_{t,\vec x}\}_{(t,\vec x)\in (0,T)\times\Omega}$, which is a disspative measure-valued solution of the Euler system in the  sense of Definition \ref{def:DMV}.
Following \cite{Feireisl-Lukacova-Mizerova-She:2021} the concentration defects are
{\small 
\begin{align*}
& \mathfrak{E}_{cd}= \overline{E(\rho, \vec m, \eta)} - \langle \mathcal{V}_{\tau,\vec x};\, E(\tilde{\rho}, \tilde{\vec m}, \tilde{\eta}) \rangle, \\ 
&\mathfrak{R}_{cd} = \overline{ \frac{{\vec m} \otimes{\vec m} }{{ \rho}}  + p(\rho,\eta) \mathbb{I} }  -   \left\langle  \mathcal{V}_{t,\vec x};\, \frac{ \widetilde{\vec m} \otimes\widetilde{\vec m} }{\widetilde{ \rho}} + p(\tilde{\rho},\tilde{\eta}) \mathbb{I}    \right\rangle
\end{align*}}%
with
{\small
\begin{align*}
&& E(\rho_{h_n}, \vec m_{h_n}, \eta_{h_n}) \longrightarrow~  \overline{E(\rho, \vec m, \eta)}  \quad \mbox{weakly-}(*)~\mbox{in~}  \mathcal{M}(\overline{\Omega}),\\
&&  \frac{ {\vec m_{h_n}} \otimes {\vec m_{h_n}} }{{ \rho_{h_n}}}  + p(\rho_{h_n},\eta_{h_n}) \mathbb{I} \longrightarrow~
  \overline{ \frac{{\vec m} \otimes{\vec m} }{{ \rho}}  + p(\rho,\eta) \mathbb{I} }
\quad \mbox{weakly-}(*)~\mbox{in~}   \mathcal{M}(\overline{\Omega}; \mathbb{R}^{d\times d}_{\mbox{sym}}).
\end{align*}}%
\end{remark}

\begin{theorem}\rm\label{thm:weak-convergence}
(\textbf{Weak convergence})
	
\noindent Let $\{ \rho_h, \vec m_h, \eta_h \}_{h \downarrow 0}$ be the family of numerical solutions obtained by the finite volume method \eqref{eq:semi-discrete-RPflux-phi}.
Let Assumption~\ref{assumption} hold, i.e.
$0 < \underline{\rho} \leq \rho_h,
E_h \leq \overline{E}$ for some $\underline{\rho},\, \overline{E}$.
Then there exists a subsequence
$\{ \rho_{h_n}, \vec m_{h_n}, \eta_{h_n} \}_{{h_n} > 0}$,
such that
\begin{align*}
&& (\rho_{h_n}, \vec m_{h_n}, \eta_{h_n}) ~ \longrightarrow~ \langle \mathcal{V}_{t,\vec x};\, (\widetilde{\rho}, \widetilde{\vec m} , \widetilde{\eta}  )\rangle \quad \mbox{weakly-}(*)~\mbox{in~}  L^{\infty }((0,T)\times \Omega; \mathbb{R}^{d+2}),
\end{align*}
where $\{\mathcal{V}_{t,\vec x}\}_{(t,\vec x)\in (0,T)\times\Omega}$ is a DMV solution of the complete Euler system \eqref{eq:multiD-Euler}, \eqref{eq:Euler-entropy-inequality} with
\begin{equation}
\mathfrak{E}_{cd}\equiv 0, \quad \mathfrak{R}_{cd} \equiv 0.
\end{equation}
\end{theorem}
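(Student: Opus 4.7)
The plan is to combine the uniform $L^\infty$ bounds from Lemma~\ref{lemma:lemma-U-bounded} with the consistency formulation in Theorem~\ref{theorem:consistency} and the fundamental theorem of Young measures. Under Assumption~\ref{assumption}, Lemma~\ref{lemma:lemma-U-bounded} supplies uniform-in-$h$ bounds on $\rho_h,\vec m_h,p_h,E_h$, and hence on $\eta_h$, in $L^\infty((0,T)\times\Omega)$. Banach--Alaoglu then extracts a subsequence, still denoted $(\rho_{h_n},\vec m_{h_n},\eta_{h_n})$, that converges weakly-$(*)$ in $L^\infty((0,T)\times\Omega;\mathbb{R}^{d+2})$. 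The uniform bound is an a fortiori equi-integrability condition, so the fundamental theorem of Young measures furnishes a parametrized probability measure $\{\mathcal{V}_{t,\vec x}\}$ with the property that, for every continuous $g:\mathbb{R}^{d+2}\to\mathbb{R}$,
\begin{equation*}
g(\rho_{h_n},\vec m_{h_n},\eta_{h_n}) \longrightarrow \langle \mathcal{V}_{t,\vec x};\, g(\tilde\rho,\tilde{\vec m},\tilde\eta) \rangle \quad \mbox{weakly-}(*)\mbox{ in } L^\infty((0,T)\times\Omega).
\end{equation*}

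Next I would pass to the limit $h_n \downarrow 0$ in the consistency identities \eqref{eq:consistency-density}--\eqref{eq:consistency-entropy}. The weak BV estimate \eqref{eq:U-l2norm-bound} controls the jump integral appearing in \eqref{eq:consistency-errors}, hence $\|e_{j,h}\|_{L^1(0,T)} = O(h^{1/2}) \to 0$. Each nonlinear quantity entering the identities, namely $\vec m_h$, $\vec m_h\otimes \vec m_h/\rho_h$, $p_h$ and the entropy flux $\eta_h \vec m_h /\rho_h$, is a continuous function of $(\rho_h,\vec m_h,\eta_h)$ on the compact set that Lemma~\ref{lemma:lemma-U-bounded} keeps bounded away from $\{\rho=0\}$; therefore each admits a weak-$(*)$ $L^\infty$ limit equal to the corresponding Young-measure barycenter. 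In the limit, \eqref{eq:consistency-density} and \eqref{eq:consistency-momentum} become the continuity and momentum identities of Definition~\ref{def:DMV}, \eqref{eq:consistency-entropy} yields the entropy inequality, and \eqref{eq:consistency-energy} combined with $E_h\le\overline E$ produces the energy inequality \eqref{eq:DMV-energy}. The initial data are identified via $\Pi_h\vec U_0 \to \vec U_0$ in $L^p(\Omega)$.

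For the vanishing of the concentration defects $\mathfrak{E}_{cd}$ and $\mathfrak{R}_{cd}$ I would use the same principle. Since $E(\rho_h,\vec m_h,\eta_h)$ and $\vec m_h\otimes \vec m_h/\rho_h + p_h\mathbb{I}$ are uniformly bounded in $L^\infty(\overline\Omega)$, they are equi-integrable, so no singular part can form in their weak-$(*)$ limits in $\mathcal{M}(\overline\Omega)$. Hence these limits are absolutely continuous and coincide with $\langle\mathcal{V}_{t,\vec x};E(\tilde\rho,\tilde{\vec m},\tilde\eta)\rangle\,d\vec x$ and $\langle\mathcal{V}_{t,\vec x};\tilde{\vec m}\otimes\tilde{\vec m}/\tilde\rho + p(\tilde\rho,\tilde\eta)\mathbb{I}\rangle\,d\vec x$ respectively, forcing $\mathfrak{E}_{cd} \equiv 0$ and $\mathfrak{R}_{cd} \equiv 0$.

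The main obstacle I foresee is the weak-in-time continuity and the matching of the initial trace demanded by Definition~\ref{def:DMV}, since Young measures by themselves encode only $L^\infty$-in-time information. I would address it by extracting equicontinuity in time directly from the consistency identities: for each fixed smooth test $\phi$, the time derivative of $t\mapsto\int_\Omega \rho_h\phi\,d\vec x$, and likewise for $\vec m_h$ and $\eta_h$, is controlled by a surface integral of the numerical flux, which is uniformly bounded thanks to Lemma~\ref{lemma:usefulestimation2}. An Arzel\`a--Ascoli argument applied on a dense countable family of test functions then delivers $C_{\mathrm{weak}}([0,T];L^p(\Omega))$ convergence of the barycenters, allowing both the initial trace and the entropy jump conditions at $t=0\pm$ to be matched.
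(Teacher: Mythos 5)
Your proposal is correct and follows essentially the same route as the paper: uniform $L^\infty$ bounds from Lemma~\ref{lemma:lemma-U-bounded}, the fundamental theorem on Young measures to generate $\{\mathcal{V}_{t,\vec x}\}$ and identify the weak-$(*)$ limits of all nonlinear composed quantities (whence $\mathfrak{E}_{cd}\equiv 0$, $\mathfrak{R}_{cd}\equiv 0$ by equi-integrability), and passage to the limit in the consistency identities of Theorem~\ref{theorem:consistency} with the error terms killed by the weak BV estimate \eqref{eq:U-l2norm-bound}. Your final Arzel\`a--Ascoli argument for the weak-in-time continuity and initial trace is a detail the paper leaves implicit (deferring to the framework of \cite{Feireisl-Lukacova-Mizerova-She:2021}), so it is a welcome supplement rather than a divergence.
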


\begin{proof}
Under Assumption~\ref{assumption} Lemma~\ref{lemma:lemma-U-bounded} gives
\begin{align*}
& \rho_h \in L^{\infty}((0,T) \times \Omega), ~ \vec m_h \in L^{\infty}((0,T)\times \Omega), ~\eta_h \in L^{\infty}((0,T) \times \Omega),  ~ E_h \in L^{\infty}((0,T) \times \Omega), \\
& \frac{\vec m_h \otimes\vec m_h }{\rho_h} \in L^{\infty}((0,T) \times \Omega), ~ p_h \in L^{\infty}((0,T) \times \Omega), ~
\vec q_h \in L^{\infty}((0,T) \times \Omega).
\end{align*}
Applying the Fundamental Theorem on Young Measure \cite{Ball:1989} implies the existence of a convergent subsequence and a parameterized probability measure $\{\mathcal{V}_{t,\vec x}\}_{(t,\vec x)\in (0,T)\times\Omega}$ satisfying that $(\rho_{h_n}, \vec m_{h_n} \eta_{h_n})$ weakly-(*) converges to $(\langle \mathcal{V}_{t,\vec x};\, \widetilde{\rho} \rangle, \langle  \mathcal{V}_{t,\vec x};\, \widetilde{\vec m}  \rangle, \langle \mathcal{V}_{t,\vec x};\, \widetilde{\eta} \rangle)$ in $L^{\infty }((0,T)\times \Omega)$.
Moreover,
\begin{equation*}
\frac{\vec m_{h_n}  \otimes\vec m_{h_n}  }{\rho_{h_n} }, \quad
p_{h_n} :=p(\rho_{h_n} ,\eta_{h_n} ),\quad
E_{h_n}  := E(\rho_{h_n} ,\vec m_{h_n} ,\eta_{h_n} ),\quad
 \vec q_{h_n}  := \vec q(\rho_{h_n} ,\vec m_{h_n} ,\eta_{h_n} )
\end{equation*}
weakly-(*) converge to
\begin{align*}
&\langle  \mathcal{V}_{t,\vec x};\, \frac{ \widetilde{\vec m} \otimes\widetilde{\vec m} }{\widetilde{\vec \rho}}   \rangle,\quad
\langle \mathcal{V}_{t,\vec x};\, p(\widetilde{\rho}, \widetilde{\eta}) \rangle, \quad
\langle \mathcal{V}_{t,\vec x};\, E(\widetilde{\rho}, \tilde{\vec m},\widetilde{\eta}) \rangle, \quad
\langle \mathcal{V}_{t,\vec x};\, \vec q(\widetilde{\rho}, \tilde{\vec m},\widetilde{\eta}) \rangle
\end{align*}
in $L^{\infty }((0,T)\times \Omega)$, respectively.
Consequently, the concentration defects vanish, i.e. $\mathfrak{E}_{cd}\equiv 0, ~ \mathfrak{R}_{cd} \equiv 0$.

Hence, passing to the limit $h \rightarrow 0$, \eqref{eq:consistency-density} in Theorem~\ref{theorem:consistency} gives
\begin{equation}
\left[ \int_{\Omega}  \langle \mathcal{V}_{t,\vec x};\, \widetilde{\rho} \rangle \phi ~d \vec x  \right]_{t = 0}^{t = \tau} = \int_{0}^{\tau} \int_{\Omega}  \langle \mathcal{V}_{t,\vec x};\, \widetilde{\rho} \rangle \partial_t \phi +    \langle \mathcal{V}_{t,\vec x};\, \widetilde{\vec m} \rangle \cdot \nabla_{\vec x} \phi   ~d \vec x dt
\end{equation}	
for $\phi \in W^{1,\infty}((0,T) \times \Omega)$.
Analogously, \eqref{eq:consistency-momentum} and  \eqref{eq:consistency-entropy} in Theorem~\ref{theorem:consistency} yield
\begin{align*}
\left[ \int_{\Omega}  \langle \mathcal{V}_{t,\vec x};\, \widetilde{\vec m} \rangle \vec \phi ~d \vec x  \right]_{t = 0}^{t = \tau} \nonumber
= & \int_{0}^{\tau} \int_{\Omega}  \langle \mathcal{V}_{t,\vec x};\, \widetilde{\vec m} \rangle \partial_t \vec \phi +   \langle  \mathcal{V}_{t,\vec x};\, \frac{ \widetilde{\vec m} \otimes\widetilde{\vec m} }{\widetilde{ \rho}}   \rangle : \nabla_{\vec x} \vec \phi  ~d \vec x dt\nonumber\\
+ & \int_{0}^{\tau} \int_{\Omega}    \langle \mathcal{V}_{t,\vec x};\, p(\tilde{\rho},  \tilde{\eta})   \rangle \div_{\vec x} \vec \phi ~d \vec x dt
\end{align*}
for $\vec \phi \in C^{1}([0,T] \times \overline{\Omega};\mathbb{R}^d)$ and $\vec \phi \cdot \vec n = 0$ for no-flux boundary condition,
and
\begin{equation}
\left[ \int_{\Omega} \langle \mathcal{V}_{t,\vec x};\, \tilde{\eta} \rangle \phi ~d \vec x   \right]^{t = \tau_2+}_{t = \tau_1-} \leq \int_{\tau_1}^{\tau_2} \int_{\Omega} \langle \mathcal{V}_{t,\vec x};\, \tilde{\eta}  \rangle \partial_t \phi + \langle \mathcal{V}_{t,\vec x};\, \vec q(\tilde{\rho}, \tilde{\vec m}, \tilde{\eta})  \rangle \cdot \nabla_{\vec x} \phi ~d \vec x dt
\end{equation}	
for $\phi \in W^{1,\infty}((0,T) \times \Omega)$, respectively.
Finally, \eqref{eq:consistency-energy} in Theorem~\ref{theorem:consistency} implies
\begin{equation}
 \int_{\Omega} \langle \mathcal{V}_{\tau,\vec x};\, E(\tilde{\rho}, \tilde{\vec m}, \tilde{\eta})  \rangle ~d \vec x   = \int_{\Omega} E(\rho_0, \vec m_0, \eta_0)  ~d \vec x
\end{equation}	
and concludes that $\{\mathcal{V}_{t,\vec x}\}_{(t,\vec x)\in (0,T)\times\Omega}$ is a DMV solution of the complete Euler system.
\end{proof}

Having shown weak convergence to a DMV solution allows us to look for strong convergence to the observable quantities, such as the expected value and first variance.
To this end we apply a novel technique of $\mathcal{K}$-convergence as introduced in \cite{Feireisl-Lukacova-Mizerova:2020b,Feireisl-Lukacova-Mizerova-She:2021}.

\begin{theorem} \rm\label{thm:K-convergence}
(\textbf{$\mathcal{K}$-convergence})

\noindent Let $\{ \rho_h, \vec m_h, \eta_h \}_{h \downarrow 0}$ be the family of numerical solutions obtained by the finite volume method \eqref{eq:semi-discrete-RPflux-phi}.
Assumption~\ref{assumption} holds.
Then there exist a subsequence
$\{ \rho_{h_n}, \vec m_{h_n}, \eta_{h_n} \}_{{h_n} \downarrow 0}$ such that
\begin{itemize}
\item  \textbf{strong convergences of  Ces\`{a}ro average}
			{\small
			\begin{equation*}
			\frac{1}{N} \sum_{n=1}^{N} (\rho_{h_n}, \vec m_{h_n}, \eta_{h_n} ) \longrightarrow \langle \mathcal{V}_{t,\vec x};\, (\widetilde{\rho}, \widetilde{\vec m}, \widetilde{\eta} ) \rangle  \quad \mbox{in~}  L^{q}((0,T)\times \Omega; \mathbb{R}^{d+2})
			\end{equation*}}%
			for $N \to \infty$ and any $1\leq q < \infty$;
\item \textbf{$L^q$ convergence to Young measure}
			{\small
			\begin{equation*}
				d_{W_s} \left[ \frac{1}{N} \sum_{n=1}^{N} \delta_{(\rho_{h_n}, \vec m_{h_n}, \eta_{h_n})} ;  \mathcal{V}_{t,\vec x}\right] \longrightarrow 0 \quad \mbox{in~}  L^{q }((0,T)\times \Omega)
			\end{equation*}}%
			for $N \to \infty$ and any $1 \leq q < s < \infty$;
\item \textbf{$L^1$ convergence of the first variance}
			{\small
			\begin{equation*}
				 \frac{1}{N} \sum_{n=1}^{N} \bigg\|(\rho_{h_n}, \vec m_{h_n}, \eta_{h_n}) -   \frac{1}{N} \sum_{n=1}^{N} (\rho_{h_n}, \vec m_{h_n}, \eta_{h_n})\bigg\| \longrightarrow 0 \quad \mbox{in~}  L^{1 }((0,T)\times \Omega)
			\end{equation*}}%
for $N \to \infty.$ 
\end{itemize}
\end{theorem}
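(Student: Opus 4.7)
The plan is to invoke the $\mathcal{K}$-convergence machinery developed in \cite{Feireisl-Lukacova-Mizerova:2020b,Feireisl-Lukacova-Mizerova-She:2021}. By Lemma~\ref{lemma:lemma-U-bounded}, Assumption~\ref{assumption} yields uniform $L^{\infty}((0,T)\times\Omega;\mathbb{R}^{d+2})$ bounds on $\{(\rho_{h_n},\vec m_{h_n},\eta_{h_n})\}$, hence uniform bounds in every $L^p$ for $1\le p<\infty$. Theorem~\ref{thm:weak-convergence} already identifies the Young measure $\{\mathcal{V}_{t,\vec x}\}$ and the weak-$(*)$ limit of the sequence, which will serve as the target of all three strong convergence statements.

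For the strong convergence of the Ces\`{a}ro averages, I would apply a Koml\'{o}s/Banach--Saks-type theorem to the sequence, bounded in $L^q$ with $1<q<\infty$: a subsequence can be extracted whose Ces\`{a}ro averages converge almost everywhere. The a.e. limit must coincide with the weak-$(*)$ limit already identified in Theorem~\ref{thm:weak-convergence}, namely $\langle \mathcal{V}_{t,\vec x};\,(\tilde\rho,\tilde{\vec m},\tilde\eta)\rangle$. Combining a.e. convergence with the uniform $L^{\infty}$ bound and Lebesgue's dominated convergence theorem then upgrades this to strong convergence in every $L^q((0,T)\times\Omega)$, $1\le q<\infty$.

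For the Wasserstein convergence of the empirical measures, the $L^{\infty}$ bound confines all the Dirac masses $\delta_{(\rho_{h_n},\vec m_{h_n},\eta_{h_n})}$ and the Young measure $\mathcal{V}_{t,\vec x}$ to a common compact set of $\mathbb{R}^{d+2}$, on which narrow convergence of probability measures is metrized by $d_{W_s}$ for every $s\in[1,\infty)$. Applying the Koml\'{o}s extraction pointwise to a countable dense family $\{\varphi_k\}\subset C_0(\mathbb{R}^{d+2})$ and diagonalizing, a further subsequence yields
\[
\frac{1}{N}\sum_{n=1}^N \varphi_k(\rho_{h_n},\vec m_{h_n},\eta_{h_n}) \;\longrightarrow\; \langle\mathcal{V}_{t,\vec x};\,\varphi_k\rangle \quad \mbox{a.e. in } (0,T)\times\Omega
\]
for each $k$. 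This is equivalent to the pointwise vanishing of $d_{W_s}$; uniform boundedness and dominated convergence then give the $L^q$ statement for $1\le q<s<\infty$.

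For the first variance, I would use the triangle inequality
\[
\frac{1}{N}\sum_{n=1}^N \bigl\|U_{h_n}-\bar U_N\bigr\| \;\leq\; \frac{1}{N}\sum_{n=1}^N \bigl\|U_{h_n}-\langle\mathcal{V};\tilde U\rangle\bigr\| + \bigl\|\langle\mathcal{V};\tilde U\rangle-\bar U_N\bigr\|,
\]
where $U_{h_n}=(\rho_{h_n},\vec m_{h_n},\eta_{h_n})$ and $\bar U_N = \frac{1}{N}\sum_{n=1}^N U_{h_n}$. The second term tends to zero in $L^1$ by the Ces\`{a}ro convergence, and the first, passing through the Wasserstein convergence, tends in $L^1$ to the first absolute central moment $\langle\mathcal{V}_{t,\vec x};\,\|\tilde U-\langle\mathcal{V};\tilde U\rangle\|\rangle$. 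The main obstacle is to show this central moment vanishes a.e., equivalently that $\mathcal{V}_{t,\vec x}$ is a Dirac mass: this is precisely where the DMV--strong uniqueness principle intervenes, forcing the Young measure to collapse to the unique weak/strong/classical solution on its lifespan.
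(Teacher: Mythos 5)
You should first note what the paper actually does here: it supplies no in-text proof of Theorem~\ref{thm:K-convergence} at all. The theorem is imported from the abstract $\mathcal{K}$-convergence theory of \cite{Feireisl-Lukacova-Mizerova:2020b,Feireisl-Lukacova-Mizerova-She:2021}, applied to the family $U_{h_n}:=(\rho_{h_n},\vec m_{h_n},\eta_{h_n})$, which is uniformly bounded in $L^\infty$ by Lemma~\ref{lemma:lemma-U-bounded} and generates the Young measure $\mathcal{V}_{t,\vec x}$ of Theorem~\ref{thm:weak-convergence}. Your reconstruction of that machinery for the first two bullets is exactly the argument underlying the cited results: Kolm\'os extraction of a subsequence whose Ces\`{a}ro averages converge a.e., identification of the a.e.\ limit with the weak-$(*)$ limit (the barycenter), upgrade to $L^q$ via the uniform bound and dominated convergence, and, for the Wasserstein statement, metrization of narrow convergence by $d_{W_s}$ on the common compact support together with a diagonal argument over a countable dense family $\{\varphi_k\}$. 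The only refinement worth adding is that Koml\'os' theorem must be used in its strong form --- the extracted subsequence has the property that \emph{every further} subsequence Ces\`{a}ro-converges a.e.\ to the same limit --- since this is what legitimizes both the diagonalization over $k$ and the identification of the Koml\'os limit of $\frac1N\sum_n\varphi_k(U_{h_n})$ with $\langle\mathcal{V}_{t,\vec x};\varphi_k\rangle$.

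The genuine gap is in your treatment of the third bullet. The DMV--strong uniqueness principle is not at your disposal under the hypotheses of Theorem~\ref{thm:K-convergence}: only Assumption~\ref{assumption} is assumed, and no weak/strong/classical solution is posited to exist --- that additional hypothesis is precisely what distinguishes Theorem~\ref{thm:strong-convergence}, where the collapse $\mathcal{V}_{t,\vec x}=\delta_{(\rho,\vec m,\eta)}$ is established. Without it the Young measure need not be a Dirac mass; the paper's own Richtmyer--Meshkov experiment (Example~\ref{example:2D-RM}) is presented explicitly as a case where the limit is a genuine DMV solution with nonvanishing first variance, so your final step is not merely unjustified but false in general. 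What your triangle-inequality computation actually establishes --- and what the cited $\mathcal{K}$-convergence theory asserts --- is
\[
\frac{1}{N}\sum_{n=1}^{N}\bigl\| U_{h_n}-\overline{U}_N \bigr\| \;\longrightarrow\; \Bigl\langle \mathcal{V}_{t,\vec x};\, \bigl\| \widetilde{U} - \langle \mathcal{V}_{t,\vec x};\, \widetilde{U}\rangle \bigr\| \Bigr\rangle \quad \mbox{in } L^{1}((0,T)\times\Omega),
\]
where $\overline{U}_N=\frac1N\sum_{n=1}^N U_{h_n}$ and $\widetilde U=(\widetilde\rho,\widetilde{\vec m},\widetilde\eta)$: the first variance converges in $L^1$ to the first absolute central moment of the Young measure, and the ``$\longrightarrow 0$'' in the theorem's display is to be read as the difference of these two quantities tending to zero (the conclusions of the paper accordingly speak of strong convergence \emph{to} the first variance of a DMV solution). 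Read literally, a limit of $0$ would force $\mathcal{V}_{t,\vec x}$ to be a.e.\ a Dirac mass and hence strong $L^1$ compactness of the numerical solutions themselves --- exactly the claim $\mathcal{K}$-convergence is designed to avoid. So the fix is simple: delete the final DMV--strong-uniqueness step; your argument up to that point already proves the intended statement.
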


\noindent Applying techniques developed in \cite{Feireisl-Lukacova-Mizerova-She:2021} we directly obtain the following strong convergence results.
\begin{theorem} \rm\label{thm:strong-convergence}
(\textbf{Strong convergence})
	
\noindent 	 Let $\{ \rho_h, \vec m_h, \eta_h \}_{h \downarrow 0}$ be the family of numerical solutions obtained by the finite volume method \eqref{eq:semi-discrete-RPflux-phi} and with initial data $\rho_{0,h} = \Pi_h[\rho_0],\, \vec m_{0,h} = \Pi_h[\vec m_0], \, \eta_{0,h} = \Pi_h[\eta_0]$.
Let Assumption~\ref{assumption} hold.
Let the subsequence
\begin{equation*}
(\rho_{h_n}, \vec m_{h_n}, \eta_{h_n}) \longrightarrow (\rho, \vec m, \eta) \quad \mbox{as}~ h\longrightarrow 0
\end{equation*}
in the sense specified in Theorem~\ref{thm:weak-convergence}, where the barycenters $\rho := \langle \mathcal{V}_{t,\vec x};\, \widetilde{\rho} \rangle$, $\vec m := \langle \mathcal{V}_{t,\vec x};\, \widetilde{\vec m} \rangle$ and $\eta := \langle \mathcal{V}_{t,\vec x};\, \widetilde{\eta} \rangle$.
Then the following holds:
\begin{itemize}
	\item \textbf{weak solution}
	
	If $(\rho, \vec m, \eta)$ is a weak entropy solution of the Euler system with initial data $(\rho_0, \vec m_0, \eta_0)$, then
	\begin{equation*}
		\mathcal{V}_{t,\vec x} = \delta_{(\rho(t,\vec x), \vec m(t,\vec x), \eta(t,\vec x))} \quad \mbox{for a.a.}~ (t,\vec x)\in(0,T)\times\Omega
	\end{equation*}
	and the strong convergence holds, i.e.
	\begin{align*}
		&& (\rho_{h_n}, \vec m_{h_n}, \eta_{h_n} ) \longrightarrow~  (\rho,\vec m, \eta)  \hspace{1cm}\quad \mbox{in~}  L^{q }((0,T)\times \Omega; \mathbb{R}^{d+2}) \\
		&& E(\rho_{h_n}, \vec m_{h_n}, \eta_{h_n}) \longrightarrow ~ E(\rho, \vec m, \eta)  \hspace{1cm}\hspace{0.8cm} \mbox{in~}  L^{q }((0,T)\times \Omega)
	\end{align*}
	for any $1\leq q < \infty$.

	\item \textbf{classical solution}
	
	Let $\Omega \subset \mathbb{R}^d$ be a bounded Lipschitz domain and $ (\rho, \vec m, \eta) $  such that
	\begin{equation*}
	\rho, \eta \in C^{1}([0,T] \times \overline{\Omega}), ~ \vec m  \in C^{1}([0,T] \times \overline{\Omega}; \mathbb{R}^d), ~ \rho \geq \underline{\rho} > 0 ~ \mbox{in} ~ [0,T] \times \overline{\Omega}.
	\end{equation*}
	Then  $ (\rho, \vec m, \eta) $ is a classical solution to the Euler system and
	\begin{align*}
	&(\rho_{h_n}, \vec m_{h_n}, \eta_{h_n} ) \longrightarrow~  (\rho,\vec m, \eta)  \quad \hspace{1cm}\mbox{in~}  L^{q }((0,T)\times \Omega; \mathbb{R}^{d+2})
	\end{align*}
	for any $1\leq q < \infty$.
	
	\item \textbf{strong solution}
	
	Let periodic boundary conditions are applied.
	Suppose that the Euler system admits a strong solution $ (\rho, \vec m, \eta) $ in the class
	\begin{equation*}
		\rho, \eta \in W^{1,\infty}((0,T)\times \Omega), ~ \vec m  \in W^{1,\infty}((0,T)\times \Omega; \mathbb{R}^d), ~ \rho \geq \underline{\rho} > 0 ~ \mbox{in} ~ [0,T)\times\Omega
	\end{equation*}
	emanating from initial data $(\rho_0, \vec m_0, \eta_0)$. Then  it holds
		\begin{align*}
		&& (\rho_{h_n}, \vec m_{h_n}, \eta_{h_n} ) \longrightarrow~  (\rho,\vec m, \eta)  \hspace{1cm}\quad \mbox{in~}  L^{q }((0,T)\times \Omega; \mathbb{R}^{d+2})\\
	&& E(\rho_{h_n}, \vec m_{h_n}, \eta_{h_n}) \longrightarrow ~ E(\rho, \vec m, \eta)  \hspace{1cm}\hspace{0.8cm} \mbox{in~}  L^{q }((0,T)\times \Omega)
	\end{align*}
	for any $1\leq q < \infty$.

\end{itemize}
\end{theorem}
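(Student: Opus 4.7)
The proof combines three ingredients already in place: (i) Theorem~\ref{thm:weak-convergence}, which provides weak-(*) convergence of $(\rho_{h_n},\vec m_{h_n},\eta_{h_n})$ to the barycenter of a DMV solution with vanishing concentration defects $\mathfrak{E}_{cd}\equiv 0$, $\mathfrak{R}_{cd}\equiv 0$; (ii) Theorem~\ref{thm:K-convergence}, which in particular asserts that the first variance vanishes in $L^1$; and (iii) a DMV--weak/strong uniqueness principle in the spirit of \cite{Brezina-Feireisl:2018a}. The common strategy is to show, in each of the three cases, that the Young measure collapses to a Dirac, i.e.\ $\mathcal{V}_{t,\vec x}=\delta_{(\rho(t,\vec x),\vec m(t,\vec x),\eta(t,\vec x))}$ a.e.\ on $(0,T)\times\Omega$, and then to deduce strong $L^q$-convergence from the first-variance decay combined with the uniform $L^\infty$-bound of Lemma~\ref{lemma:lemma-U-bounded}.

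For the weak-solution case I would collapse the Young measure by a convexity/Jensen argument. Under Assumption~\ref{assumption} the mathematical entropy $\eta$ is strictly convex in $(\rho,\vec m,E)$ on the non-degenerate region, and the same holds for the energy $E$ viewed as a function of $(\rho,\vec m,\eta)$. Consequently the energy balance \eqref{eq:DMV-energy} with $\mathfrak{E}_{cd}\equiv 0$, combined with the energy equality already satisfied by a weak entropy solution $(\rho,\vec m,\eta)$, forces
\begin{equation*}
\langle\mathcal{V}_{t,\vec x};\,E(\widetilde{\rho},\widetilde{\vec m},\widetilde{\eta})\rangle=E(\rho,\vec m,\eta)\quad\mbox{for a.a.}~(t,\vec x),
\end{equation*}
and equality in Jensen's inequality for a strictly convex functional yields the Dirac conclusion.

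For the classical- and strong-solution cases I would invoke the DMV--strong uniqueness principle through a relative-energy functional of the form $\int_{\Omega}\langle\mathcal{V}_{t,\vec x};\,E(\widetilde{U})-E(U)-\nabla_{\vec U}E(U)\cdot(\widetilde{U}-U)\rangle\,d\vec x$, where $U=(\rho,\vec m,\eta)$ denotes the given smooth solution. Testing the DMV continuity, momentum \eqref{eq:DMV-momentum} and entropy identities against test functions built from $U$ and exploiting the Lipschitz bounds on $U$ together with $\rho\geq\underline{\rho}>0$, one arrives at a Gronwall inequality for the relative energy with vanishing initial datum, since $(\rho_{0,h},\vec m_{0,h},\eta_{0,h})=\Pi_h[(\rho_0,\vec m_0,\eta_0)]$. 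The relative energy must therefore vanish identically, forcing $\mathcal{V}_{t,\vec x}=\delta_{U}$. In the classical case the extra regularity $U\in C^1$ lets one integrate by parts in the DMV identities to recover the Euler system pointwise.

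Once the Dirac concentration is established, strong convergence of $(\rho_{h_n},\vec m_{h_n},\eta_{h_n})$ in $L^q$, $1\leq q<\infty$, follows from the vanishing of the first variance in Theorem~\ref{thm:K-convergence} together with the $L^\infty$ bounds of Lemma~\ref{lemma:lemma-U-bounded}; the corresponding convergence of $E(\rho_{h_n},\vec m_{h_n},\eta_{h_n})$ then follows from the continuity of $E$ on the compact set fixed by those bounds and dominated convergence along an a.e.--converging subsequence. The main technical obstacle is the DMV--strong uniqueness step: closing the Gronwall estimate for the relative energy requires delicate handling of the pressure and entropy contributions, but this is exactly the machinery developed in \cite{Feireisl-Lukacova-Mizerova-She:2021}, which transfers verbatim to the present scheme thanks to Assumption~\ref{assumption} and the vanishing concentration defects supplied by Theorem~\ref{thm:weak-convergence}.
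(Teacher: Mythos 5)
Your overall strategy matches the paper's: the paper gives no self-contained proof of Theorem~\ref{thm:strong-convergence}, stating only that the results follow ``directly'' from the techniques of \cite{Feireisl-Lukacova-Mizerova-She:2021} and the DMV--strong uniqueness principle of \cite{Brezina-Feireisl:2018a}, and your reconstruction --- Jensen's inequality with strict convexity of $E(\tilde\rho,\tilde{\vec m},\tilde\eta)$ on the non-degenerate region to collapse the Young measure in the weak-solution case (using $\mathfrak{E}_{cd}\equiv 0$ and the energy equality \eqref{eq:DMV-energy}), and a relative-energy/Gronwall argument with vanishing initial relative energy (since the initial data are projections $\Pi_h$ of fixed functions) in the classical and strong cases --- is precisely that machinery. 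However, your \emph{final} step contains a genuine gap. Deducing strong convergence from the first-variance decay of Theorem~\ref{thm:K-convergence} cannot work as stated: that theorem gives $\frac1N\sum_{n=1}^N\|U_{h_n}-\frac1N\sum_{n=1}^N U_{h_n}\|\to 0$ in $L^1$, which, combined with the $L^q$ convergence of the Ces\`aro averages, yields only $\frac1N\sum_{n=1}^N\|U_{h_n}-U\|_{L^1}\to 0$; for nonnegative $a_n$, Ces\`aro convergence $\frac1N\sum a_n\to 0$ implies $a_n\to 0$ only along a \emph{further} subsequence, whereas Theorem~\ref{thm:strong-convergence} asserts convergence of the very subsequence $\{h_n\}$ fixed in Theorem~\ref{thm:weak-convergence} (and Theorem~\ref{thm:K-convergence} itself is stated only for \emph{some} subsequence, possibly a different one). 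The correct mechanism, used in \cite{Feireisl-Lukacova-Mizerova:2020a,Feireisl-Lukacova-Mizerova-She:2021}, bypasses $\mathcal{K}$-convergence entirely: by the fundamental theorem on Young measures \cite{Ball:1989}, $\mathcal{V}_{t,\vec x}=\delta_{(\rho,\vec m,\eta)(t,\vec x)}$ a.e.\ holds if and only if the generating sequence converges to $(\rho,\vec m,\eta)$ \emph{in measure} on $(0,T)\times\Omega$; together with the uniform $L^\infty$ bounds of Lemma~\ref{lemma:lemma-U-bounded} this upgrades to convergence in $L^q$ for every $q<\infty$, and since $E$ is continuous on the fixed compact range of the numerical solutions, $E(\rho_{h_n},\vec m_{h_n},\eta_{h_n})\to E(\rho,\vec m,\eta)$ in $L^q$ follows for the whole subsequence, not merely a refinement of it.

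A smaller point concerns the classical-solution case: you run the relative-energy Gronwall estimate with reference state $U=(\rho,\vec m,\eta)$ before knowing that $U$ solves the Euler system, yet you recover the system pointwise only after the Dirac structure is established --- read literally, this is circular, because the residual $\partial_t U+\mathrm{div}_{\vec x}\vec F(U)$ arising in the relative-energy computation does not vanish for a generic $C^1$ function. The resolution in the cited works is that $U$ is not arbitrary but the \emph{barycenter} of the same DMV solution, so its equations hold with defect terms (e.g.\ the difference $\langle\mathcal{V}_{t,\vec x};\,\widetilde{\vec m}\otimes\widetilde{\vec m}/\widetilde\rho+p(\tilde\rho,\tilde\eta)\mathbb{I}\rangle-\vec m\otimes\vec m/\rho-p\,\mathbb{I}$) that are themselves dominated by the relative energy; with this the Gronwall estimate closes, $\mathcal{V}_{t,\vec x}=\delta_U$ follows, and only then may one read off that $U$ is a classical solution. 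Your sketch is right in spirit and correctly identifies the relevant machinery, but these two steps need to be repaired and reordered for the argument to be complete.
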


\section{Numerical results}
\noindent In this section we simulate a spiral problem, i.e. two-dimensional Riemann problem, 
Kelvin-Helmholtz problem and Richtmyer-Meshkov problem \cite{Fjordholm-Mishra-Tadmor:2016,Fjordholm-Kappeli-Mishra-Tadmor:2017} to illustrate the weak, strong and $\mathcal{K}$-convergence of the finite volume method \eqref{eq:semi-discrete-RPflux-phi}.

In our computations the computational domain is $[0,1]\times[0,1]$ and divided into $n\times n$ uniform cells.
Denote the Ces\`{a}ro  average of the numerical solutions and their first variance
{
\begin{equation*}
\tilde{U}_{h_n} = \frac1n\sum_{j = 1}^{n} U_{h_j}, \quad \tilde{U}^\dagger_{h_n} = \frac1n\sum_{j = 1}^{n} |U_{h_j}-\tilde{U}_{h_n}|,
\end{equation*}}%
respectively.
Let $U_{h_N}$ be the reference solution computed on the finest mesh with $N \times N$ cells.
Analogously to \cite{Feireisl-Lukacova-She-Wang:2019} we compute four errors 
{
\begin{equation}
	E_1 = \| U_{h_n} - U_{h_N} \|,
	E_2 = \| \tilde{U}_{h_n} - \tilde{U}_{h_N} \|,
 	E_3 = \| U^\dagger_{h_n} - U^\dagger_{h_N} \|,
 	E_4 = \| W_1(\overline{\mathcal{V}}_{t,x}^n, \overline{\mathcal{V}}_{t,x}^N) \|,
\end{equation}}%
where
$\overline{\mathcal{V}}_{t,x}^n$ is the Ces\`{a}ro  average of the Dirac measures concentrated on numerical solution $U_{h_n}$.
In addition, we apply the outflow boundary condition to the spiral problem and periodic boundary condition to the other two problems. 
Moreover, the CFL number is set to $0.9$ and the adiabatic index $\gamma$ is taken as $1.4$.

\begin{example}[\textbf{Spiral problem}]\label{example:2D-Spiral}\rm
	We consider one of the classical 2D Riemann problem with the initial data
	{\small
	\begin{equation*}
		(\rho ,  \vec u , p)(x,0)
		=  \begin{cases}
			(0.5 ,\, 0.5 ,\, -0.5 ,\, 5 ) , ~\mbox{if}~ x > 0.5,\,y>0.5;  \hspace{0.45cm}
			(1 ,\,  0.5,\, 0.5 , \, 5) , ~\mbox{if}~ x<0.5,\,y>0.5; \\
			(1.5 ,\,  -0.5,\, -0.5 , \, 5) ,  ~\mbox{if}~ x>0.5,\,y<0.5;  \hspace{0.15cm}
			(2 ,\,  -0.5,\, 0.5 , \, 5) , ~\mbox{if}~ x<0.5,\,y<0.5.
		\end{cases}
	\end{equation*}}%
	This problem describes the interaction of four contact discontinuities (vortex sheets) with the negative sign.
	As time increases the four initial vortex sheets interact each other to form a spiral with the low density around the center of the domain.
	This is a typical cavitation phenomenon well-known in gas dynamics.
	We compute the solution up to the finite time $T = 2$.
	
	Figure \ref{figure:2D-Spiral-error} shows the errors $E_1, E_2, E_3, E_4$ of $\rho, m_1, m_2, E, S$ obtained on different meshes and the reference solution on a mesh with $2048\times 2048$ cells.
	The errors of $\rho, S$ are specifically listed in Tables \ref{tabel:2D-Spiral-density},  \ref{tabel:2D-Spiral-entropy},  respectively.
	Moreover, Figures \ref{figure:2D-Spiral-densitycontour},  \ref{figure:2D-Spiral-entropycontour} show the contour of $\rho$ and $S$ obtained on different meshes, respectively.
	
	The numerical results show that four errors are all decreasing with the refinement of mesh.
	This together with the pictures of the first variance indicate that $\mathcal{V}_{t,x} = \delta_{(\rho,\vec m, S)(x,t)}$ and the numerical solutions converge to the weak solution.
	This is in accordance with our theoretical results.
	We point out that the convergence rate is 1.
\end{example}

\begin{figure}[htbp]
	\setlength{\abovecaptionskip}{0.cm}
	\setlength{\belowcaptionskip}{-0.cm}
	\centering
	\begin{subfigure}{0.243\textwidth}
		\includegraphics[width=\textwidth]{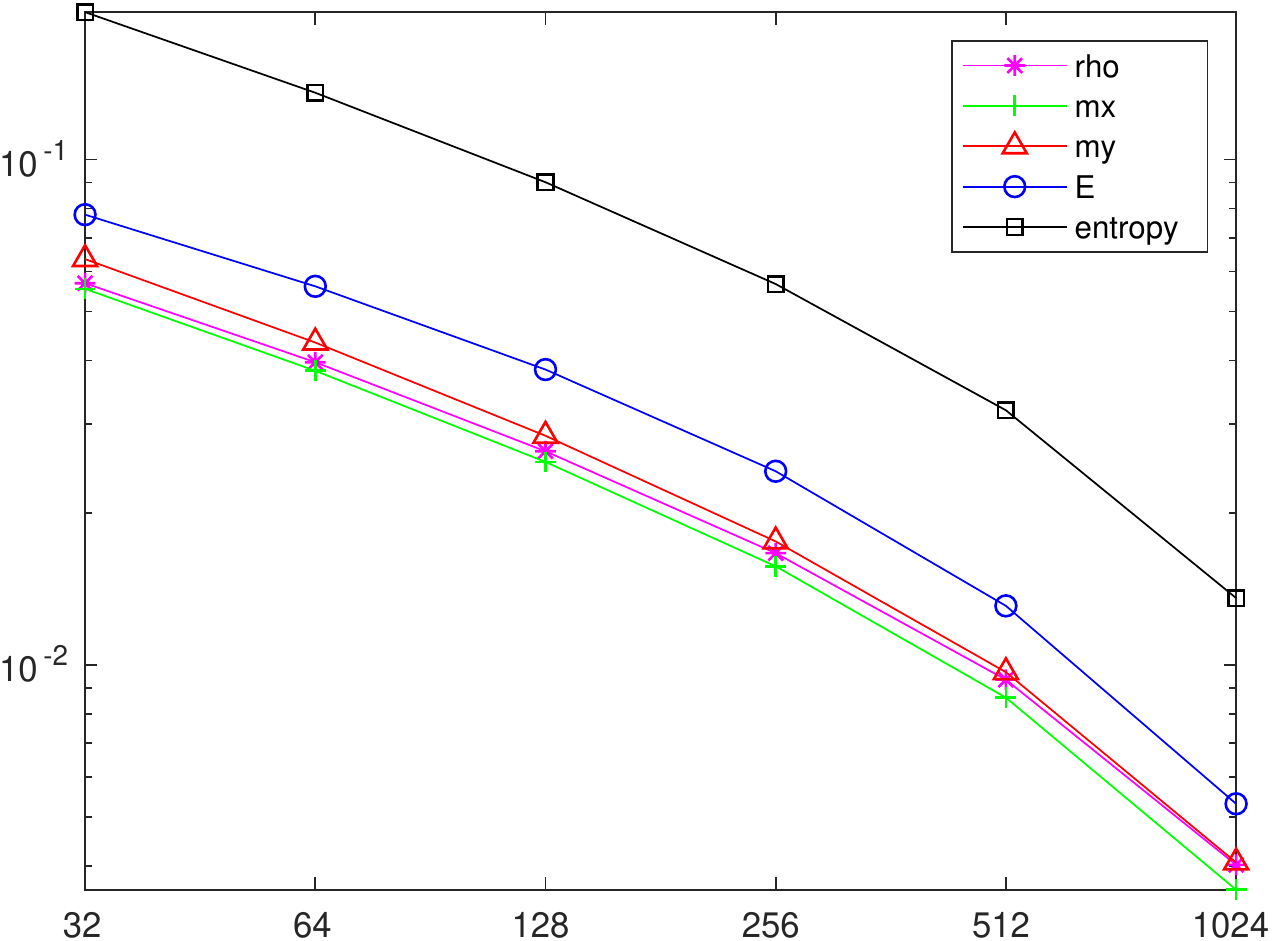}
		\caption{$E_1$}
	\end{subfigure}	
	\begin{subfigure}{0.243\textwidth}
		\includegraphics[width=\textwidth]{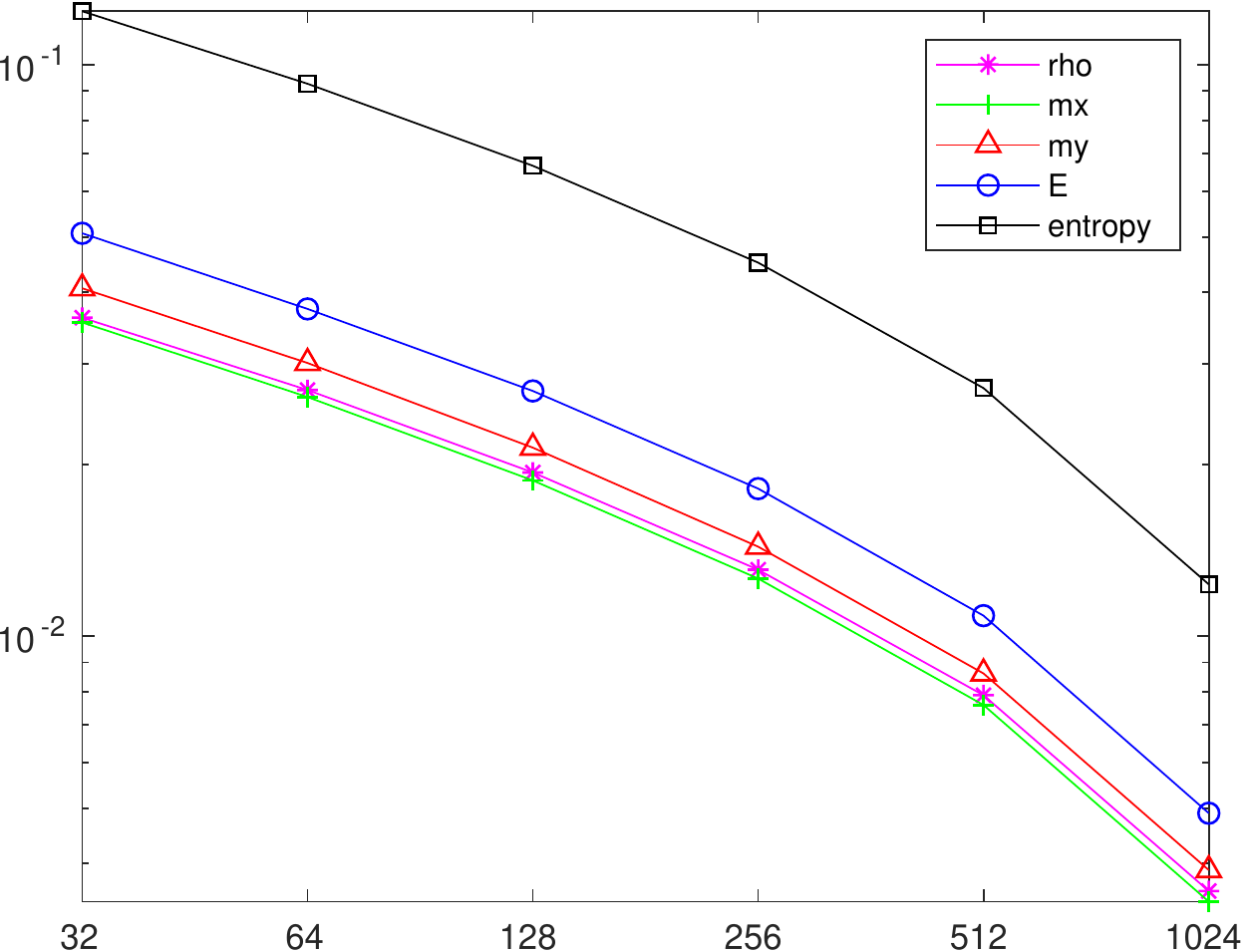}
		\caption{$E_2$}
	\end{subfigure}	
	\begin{subfigure}{0.243\textwidth}
		\includegraphics[width=\textwidth]{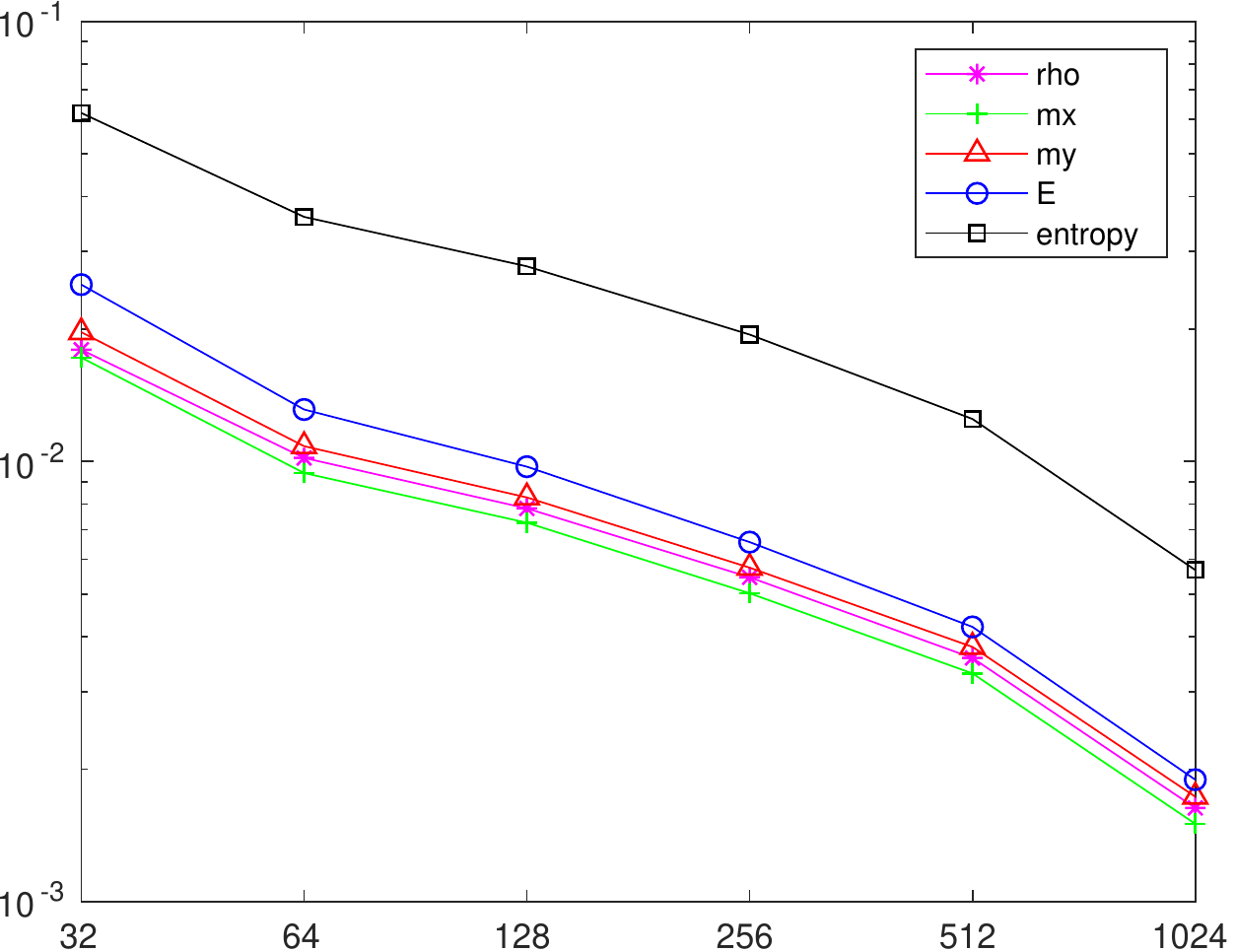}
		\caption{$E_3$}
	\end{subfigure}	
	\begin{subfigure}{0.243\textwidth}
		\includegraphics[width=\textwidth]{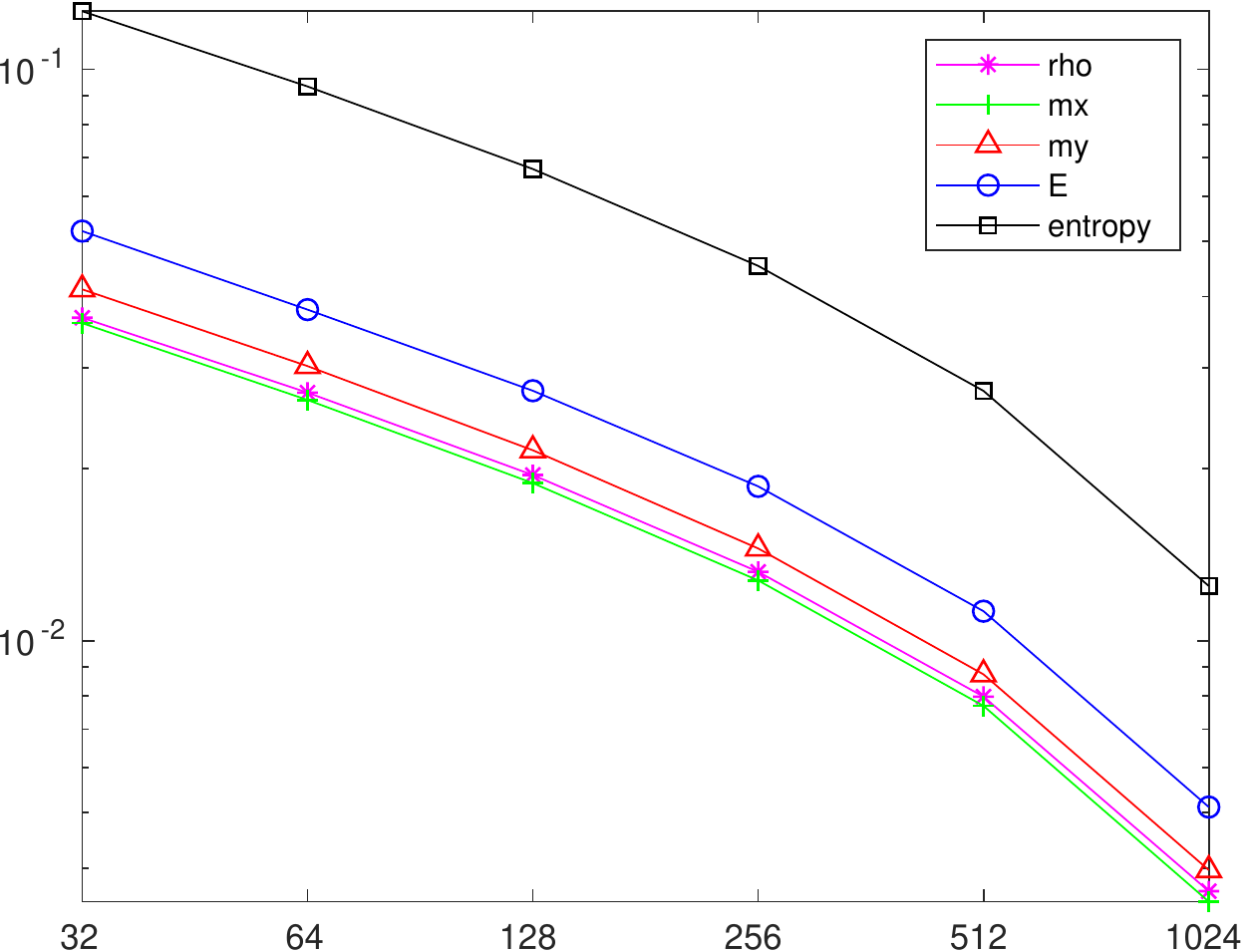}
		\caption{$E_4$}
	\end{subfigure}	
	\caption{\small{Example \ref{example:2D-Spiral}: the errors obtained on different meshes.}}\label{figure:2D-Spiral-error}
\end{figure}

\begin{table}[htbp]
	\centering
	\caption{Example \ref{example:2D-Spiral}: the errors and convergence rates for density.  } \label{tabel:2D-Spiral-density}
	\begin{tabular}{|c|cc|cc|cc|cc|}
		\hline
		\multirow{2}{*}{$n$} & \multicolumn{2}{c|}{$E_1$ } & \multicolumn{2}{c|}{$E_2$ } & \multicolumn{2}{c|}{$E_3$ }  & \multicolumn{2}{c|}{$E_4$ } \\
		\cline{2-9}
		& error & order &  error & order &  error & order  &  error & order \\
		\hline
		\hline
		32 & 0.0569 & - & 0.0361 & - & 0.0179 & - & 0.0367 & - \\

		64 & 0.0397 & 0.5189 & 0.0270 & 0.4206 & 0.0102 & 0.8165 & 0.0272 & 0.4354\\

		128 & 0.0265 & 0.5854 & 0.0193 & 0.4795 & 0.0078 & 0.3810 & 0.0195 & 0.4790\\

		256 & 0.0166 & 0.6698 & 0.0131 & 0.5655 & 0.0055 & 0.5199 & 0.0132 & 0.5626\\

		512 & 0.0093 & 0.8316 & 0.0079 & 0.7299 & 0.0036 & 0.6089 & 0.0080 & 0.7258\\

		1024 & 0.0040 & 1.2172 & 0.0036 & 1.1388 & 0.0016 & 1.1319 & 0.0036 & 1.1311\\
		\hline
	\end{tabular}
\end{table}
\begin{table}[htbp]
	\centering
	\caption{Example \ref{example:2D-Spiral}: the errors and convergence rates for entropy.  } \label{tabel:2D-Spiral-entropy}
	\begin{tabular}{|c|cc|cc|cc|cc|}
		\hline
		\multirow{2}{*}{$n$} & \multicolumn{2}{c|}{$E_1$ } & \multicolumn{2}{c|}{$E_2$ } & \multicolumn{2}{c|}{$E_3$ }  & \multicolumn{2}{c|}{$E_4$ } \\
		\cline{2-9}
		& error & order &  error & order &  error & order  &  error & order \\
		\hline
		\hline
		32 & 0.1956 & - & 0.1242 & - & 0.0619 & - & 0.1263 & - \\

		64 & 0.1355 & 0.5297 & 0.0927 & 0.4216 & 0.0360 & 0.7841 & 0.0933 & 0.4372\\

		128 & 0.0901 & 0.5882 & 0.0666 & 0.4771 & 0.0278 & 0.3734 & 0.0669 & 0.4800\\

		256 & 0.0567 & 0.6692 & 0.0451 & 0.5634 & 0.0194 & 0.5150 & 0.0453 & 0.5632\\

		512 & 0.0319 & 0.8297 & 0.0272 & 0.7294 & 0.0125 & 0.6370 & 0.0274 & 0.7267\\

		1024 & 0.0136 & 1.2341 & 0.0123 & 1.1417 & 0.0057 & 1.1396 & 0.0125 & 1.1346\\
		\hline
	\end{tabular}
\end{table}

\begin{figure}[htbp]
		\setlength{\abovecaptionskip}{-0.1cm}
	\setlength{\belowcaptionskip}{-0.2cm}
	\centering
	\includegraphics[width=\textwidth]{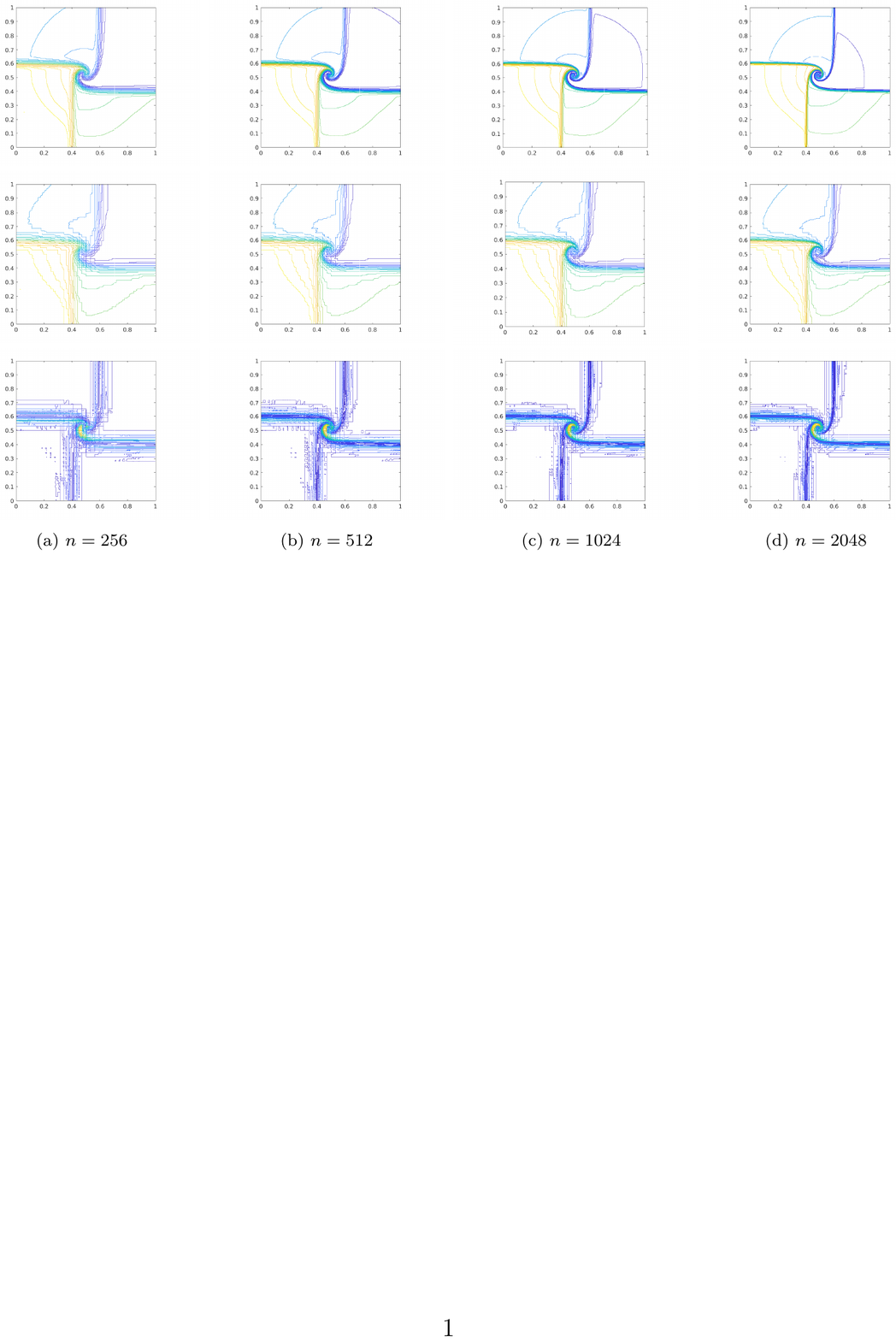}
	\caption{\small{Example \ref{example:2D-Spiral}: the density contours obtained on meshes with $n\times n$ cells. From top to bottom: density (top); Ces\'{a}ro averages of density (middle); the first variance of density (bottom). }}\label{figure:2D-Spiral-densitycontour}
\end{figure}


\begin{figure}[t]
		\setlength{\abovecaptionskip}{-0.1cm}
	\setlength{\belowcaptionskip}{-0.2cm}
	\centering
	\includegraphics[width=\textwidth]{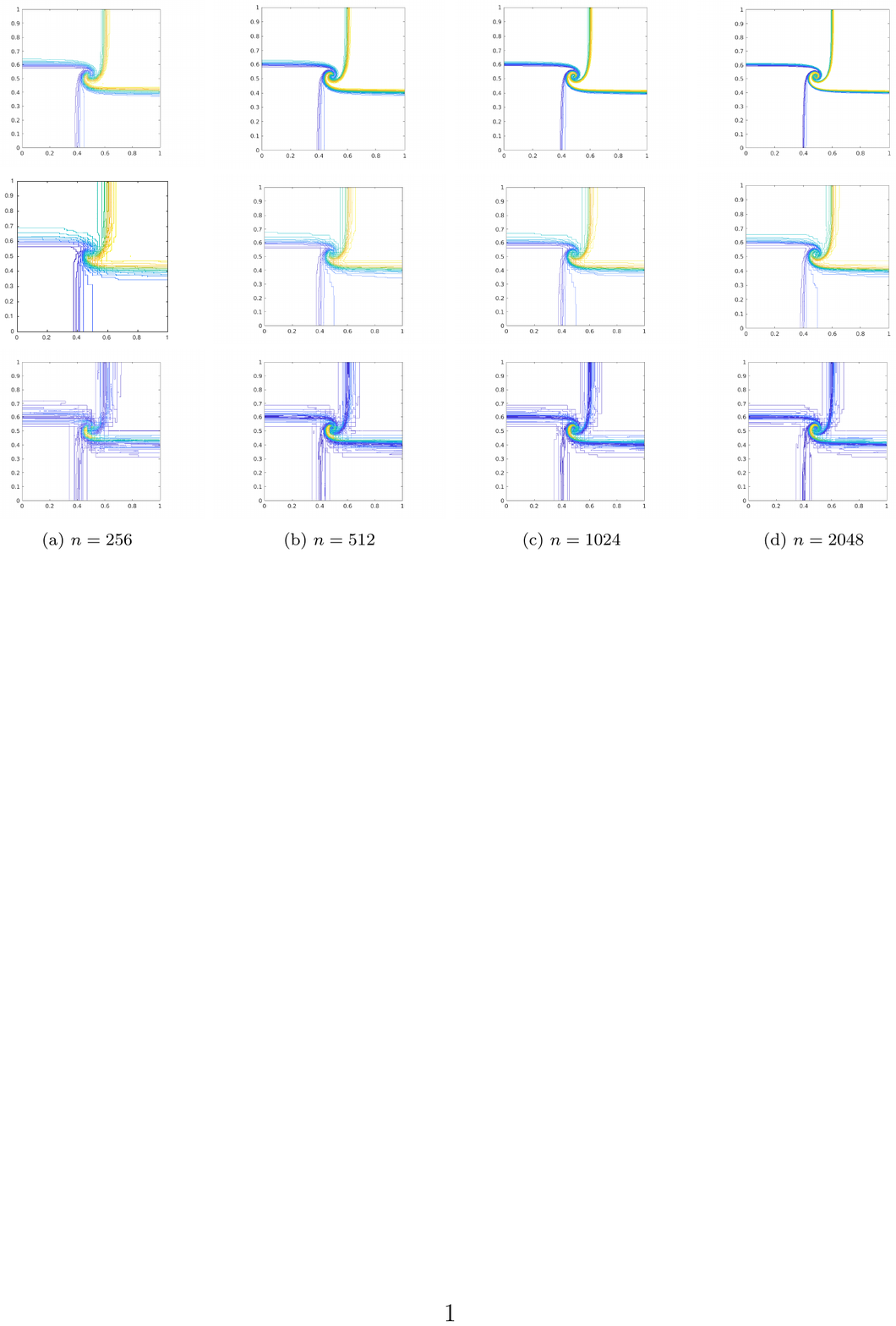}
	\caption{\small{Example \ref{example:2D-Spiral}: the entropy contours obtained on meshes with $n\times n$ cells. From top to bottom: entropy (top); Ces\'{a}ro averages of entropy (middle); the first variance of entropy (bottom). }}\label{figure:2D-Spiral-entropycontour}
\end{figure}

\begin{example}[\textbf{Kelvin-Helmholtz problem}]\label{example:2D-KH}\rm
	We consider a shear flow of three fluid layers with different densities.
	The initial data are given by
	\begin{equation*}
	(\rho ,  u , v, p)(x,0)
	=  \begin{cases}
	(2 ,\, -0.5 ,\, 0 ,\, 2.5 ) , &  \mbox{if}~ I_1 < x_2 < I_2, \\
	(1 ,\,  0.5,\, 0 , \, 2.5) , & \mbox{otherwise},
	\end{cases}
	\end{equation*}
	where the interface profiles
	\begin{equation*}
	I_j = I_j(\vec x) := J_j + \epsilon Y_j(\vec x), \quad j = 1,2
	\end{equation*}
	are chosen to be small perturbations around the lower $J_1 = 0.25$ and the upper $J_2 = 0.75$ interfaces, respectively.
	Moreover,
	\begin{equation*}
	Y_j = \sum_{m = 1}^{M} a_j^m \cos( b_j^m+ 2n\pi x_1 ), \quad j = 1,2,
	\end{equation*}
	where $a_j^m \in [0,1]$ and $b_j^m \in [-\pi, \pi], j = 1,2, m = 1, \cdots, M$ are arbitrary, but fixed numbers.
	The coefficients $a_j^m$ have been normalized such that $\sum_{m = 1}^{M} a_j^m = 1$ to guarantee that $|I_j - J_j | < \epsilon$ for $j = 1,2$.
	In the simulation we have $M = 10, \epsilon = 0.01$, $T = 2$ and $N  = 2048$.
	
	Figure \ref{figure:2D-KH-error} shows the errors $E_1, E_2, E_3, E_4$ of $\rho, m_1, m_2, E, S$ obtained on different meshes.
	Tables \ref{tabel:2D-KH-density}, \ref{tabel:2D-KH-entropy} present the errors of $\rho, S$,  respectively.
	Moreover, Figures \ref{figure:2D-KH-densitycontour}, \ref{figure:2D-KH-entropycontour} show the contours of $\rho$ and $S$ obtained on different meshes, respectively.
	
	The numerical results show that the numerical solutions obtained by the finite volume method \eqref{eq:semi-discrete-RPflux-phi} seem to converge in the sense of $L^1$. Though, the concergence rates of $E_2,\,E_3,\,E_4$ are much higher than that of $E_1$.
\end{example}

\begin{figure}[htbp]
	\setlength{\abovecaptionskip}{0.cm}
	\setlength{\belowcaptionskip}{-0.cm}
	\centering
	\begin{subfigure}{0.243\textwidth}
		\includegraphics[width=\textwidth]{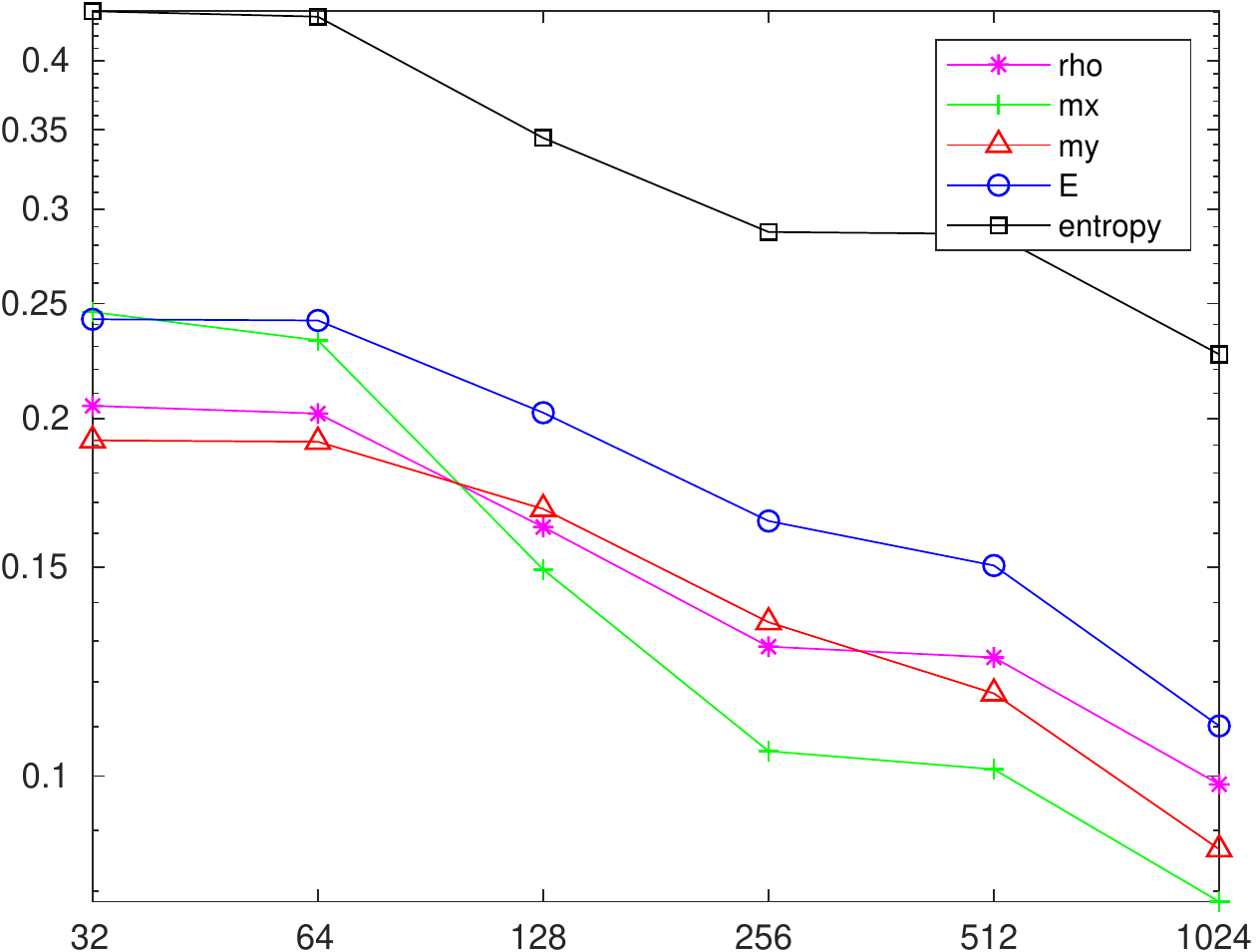}
		\caption{$E_1$}
	\end{subfigure}	
	\begin{subfigure}{0.243\textwidth}
		\includegraphics[width=\textwidth]{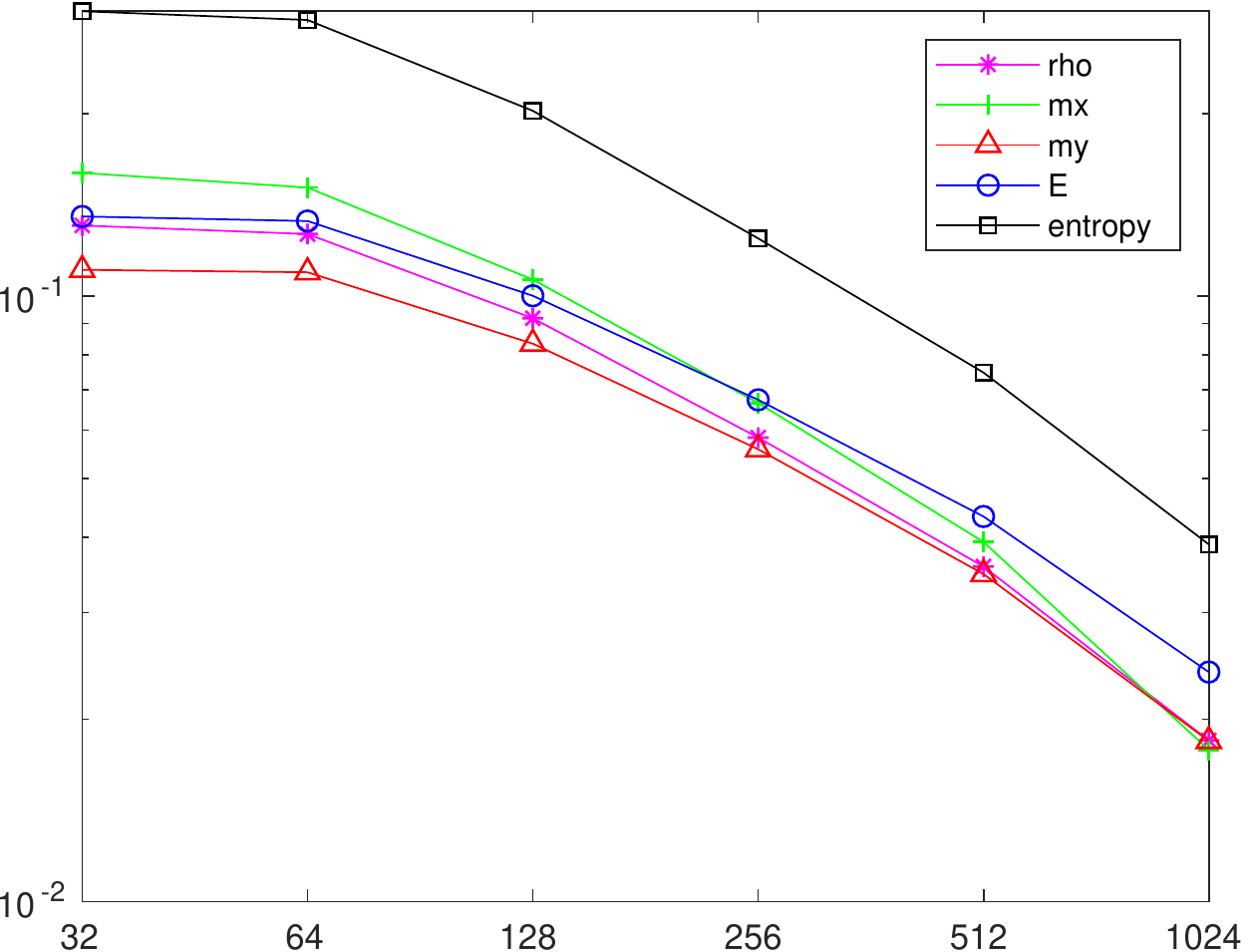}
		\caption{$E_2$}
	\end{subfigure}	
	\begin{subfigure}{0.243\textwidth}
		\includegraphics[width=\textwidth]{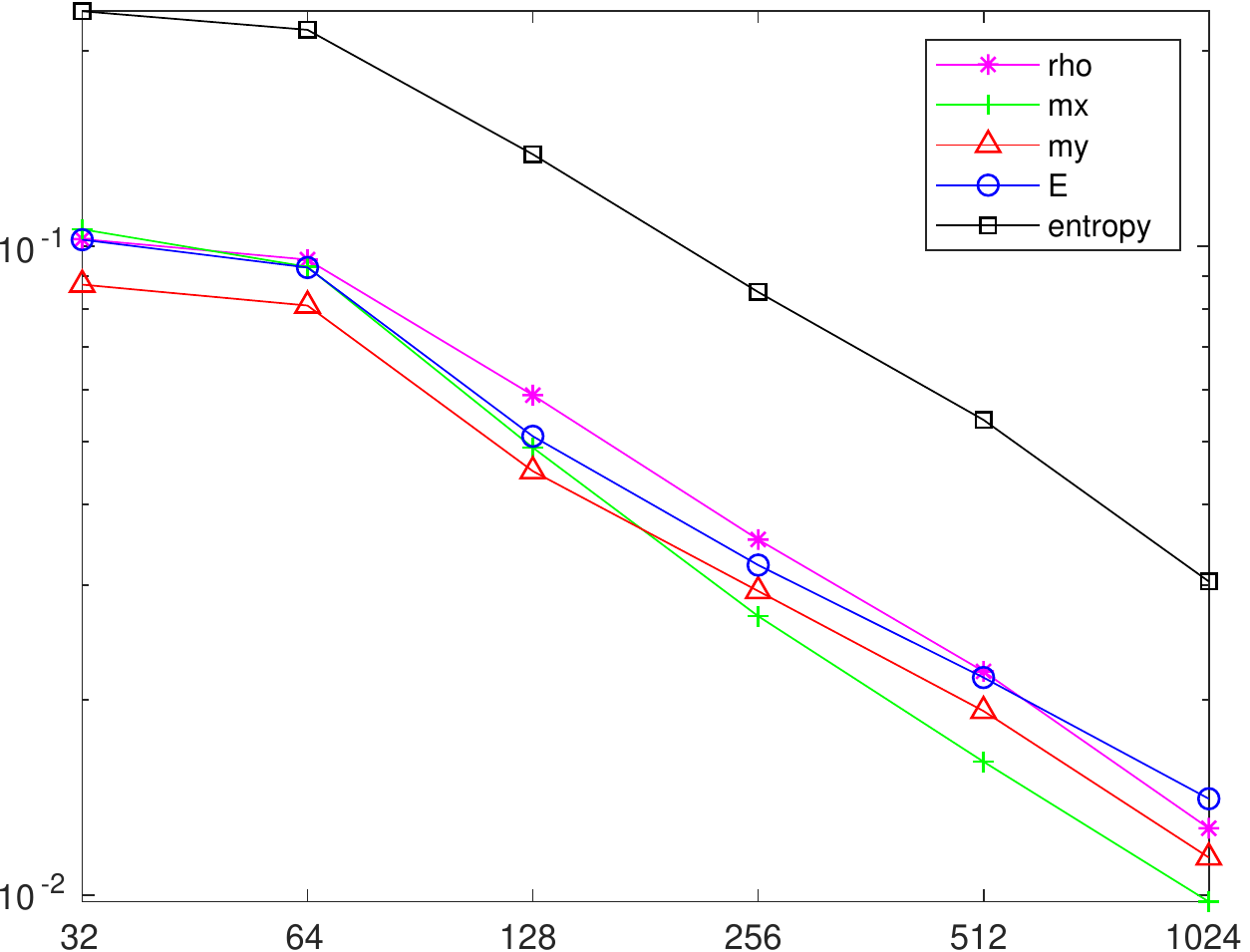}
		\caption{$E_3$}
	\end{subfigure}	
	\begin{subfigure}{0.243\textwidth}
		\includegraphics[width=\textwidth]{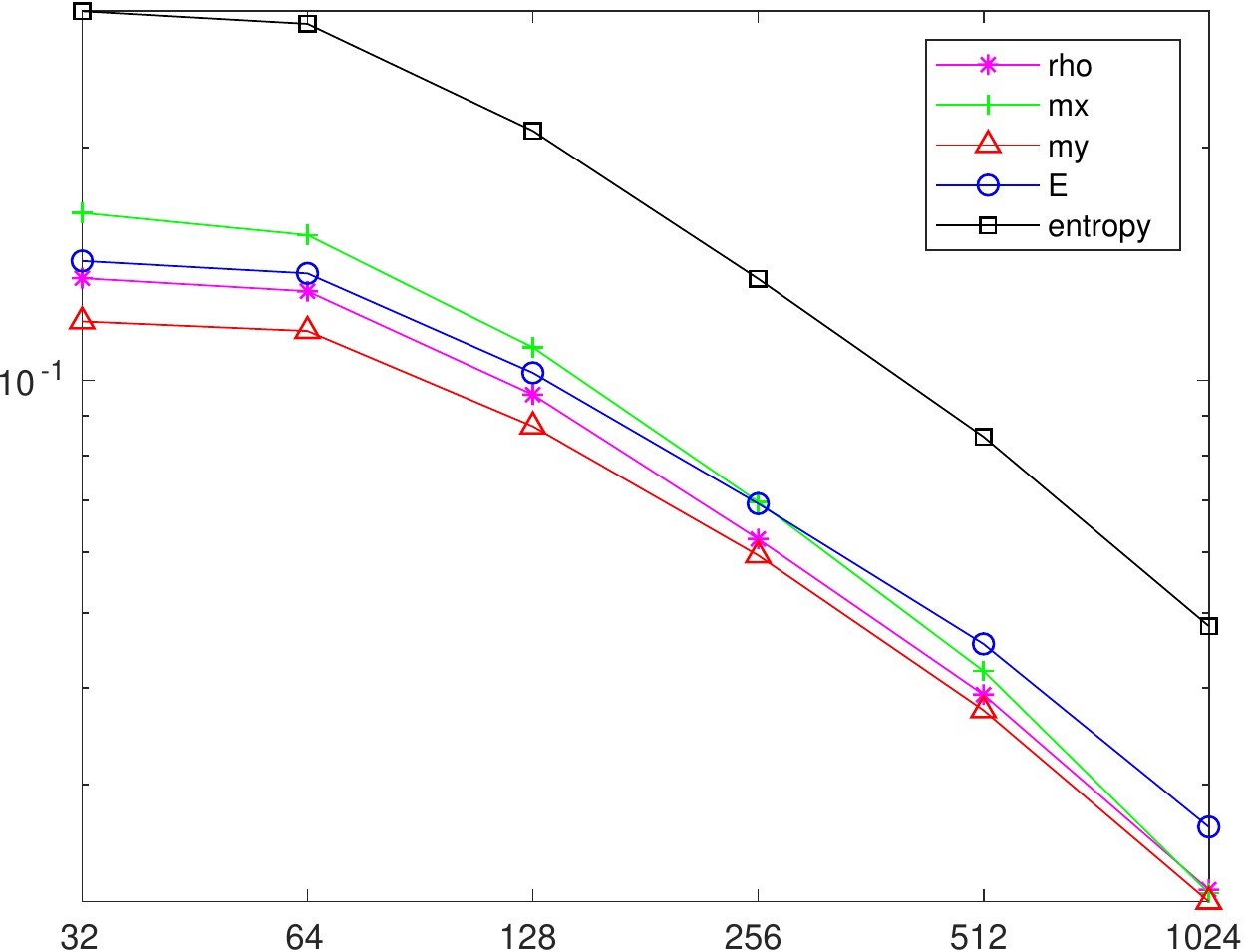}
		\caption{$E_4$}
	\end{subfigure}	
	\caption{\small{Example \ref{example:2D-KH}: the errors obtained on different meshes.}}\label{figure:2D-KH-error}
\end{figure}

\begin{table}[htbp]
	\centering
	\caption{Example \ref{example:2D-KH}: the errors and convergence rates for density.  } \label{tabel:2D-KH-density}
	\begin{tabular}{|c|cc|cc|cc|cc|}
		\hline
		\multirow{2}{*}{$n$} & \multicolumn{2}{c|}{$E_1$ } & \multicolumn{2}{c|}{$E_2$ } & \multicolumn{2}{c|}{$E_3$ }  & \multicolumn{2}{c|}{$E_4$ } \\
		\cline{2-9}
		& error & order &  error & order &  error & order  &  error & order \\
		\hline
		\hline
		32 & 0.2050 & - & 0.1307 & - & 0.1026 & - & 0.1355 & - \\
		
		64 & 0.2020 & 0.0215 & 0.1265 & 0.0468 & 0.0953 & 0.1052 & 0.1303 & 0.0561\\
		
		128 & 0.1621 & 0.3175 & 0.0918 & 0.4620 & 0.0589 & 0.6939 & 0.0958 & 0.4437\\
		
		256 & 0.1285 & 0.3345 & 0.0584 & 0.6542 & 0.0353 & 0.7393 & 0.0623 & 0.6202\\
		
		512 & 0.1259 & 0.0303 & 0.0357 & 0.7072 & 0.0221 & 0.6750 & 0.0392 & 0.6682\\
		
		1024 & 0.0984 & 0.3549 & 0.0185 & 0.9533 & 0.0127 & 0.8033 & 0.0219 & 0.8381\\
		\hline
	\end{tabular}
\end{table}

\begin{table}[htbp]
	\centering
	\caption{Example \ref{example:2D-KH}: the errors and convergence rates for entropy.  } \label{tabel:2D-KH-entropy}
	\begin{tabular}{|c|cc|cc|cc|cc|}
		\hline
		\multirow{2}{*}{$n$} & \multicolumn{2}{c|}{$E_1$ } & \multicolumn{2}{c|}{$E_2$ } & \multicolumn{2}{c|}{$E_3$ }  & \multicolumn{2}{c|}{$E_4$ } \\
		\cline{2-9}
		& error & order &  error & order &  error & order  &  error & order \\
		\hline
		\hline
		32 & 0.4405 & - & 0.2950 & - & 0.2302 & - & 0.3000 & - \\
		
		64 & 0.4359 & 0.0152 & 0.2854 & 0.0479 & 0.2155 & 0.0952 & 0.2889 & 0.0543\\
		
		128 & 0.3446 & 0.3392 & 0.2021 & 0.4980 & 0.1385 & 0.6375 & 0.2102 & 0.4593\\
		
		256 & 0.2871 & 0.2631 & 0.1244 & 0.6998 & 0.0850 & 0.7052 & 0.1353 & 0.6355\\
		
		512 & 0.2860 & 0.0055 & 0.0747 & 0.7356 & 0.0540 & 0.6544 & 0.0846 & 0.6779\\
		
		1024 & 0.2265 & 0.3366 & 0.0389 & 0.9423 & 0.0304 & 0.8274 & 0.0481 & 0.8135\\
		\hline
	\end{tabular}
\end{table}

\newgeometry{left=2.5cm,right=2.5cm,top=3cm,bottom=3cm}
\begin{figure}[t]
		\setlength{\abovecaptionskip}{-0.1cm}
	\setlength{\belowcaptionskip}{-0.2cm}
	\centering
	\includegraphics[width=\textwidth]{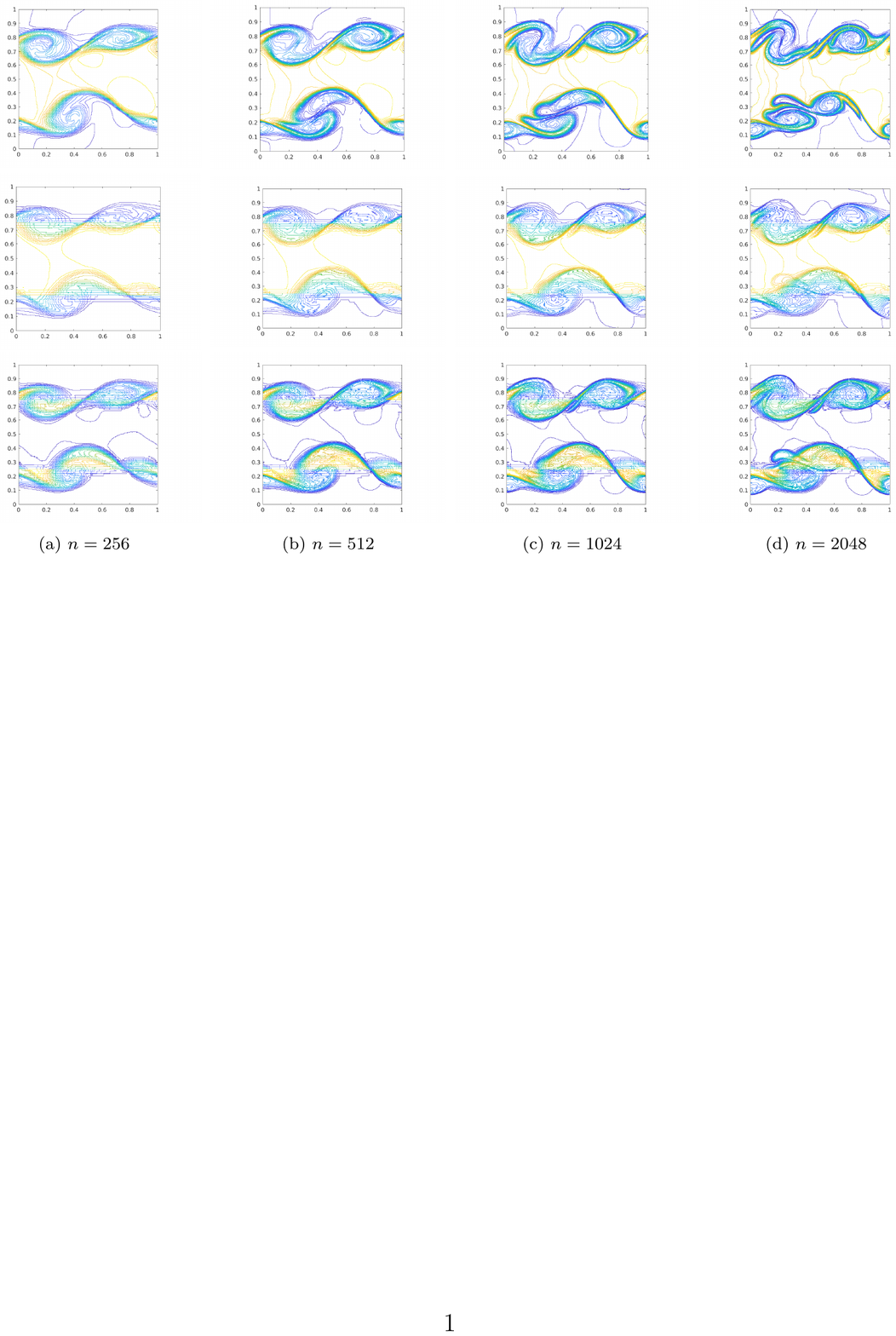}
	\caption{\small{Example \ref{example:2D-KH}: the density contours obtained on meshes with $n\times n$ cells. From top to bottom: density (top); Ces\'{a}ro averages of density (middle); the first variance of density (bottom). }}\label{figure:2D-KH-densitycontour}
\end{figure}

\begin{figure}[b]
		\setlength{\abovecaptionskip}{-0.1cm}
	\setlength{\belowcaptionskip}{-0.2cm}
	\centering
	\includegraphics[width=\textwidth]{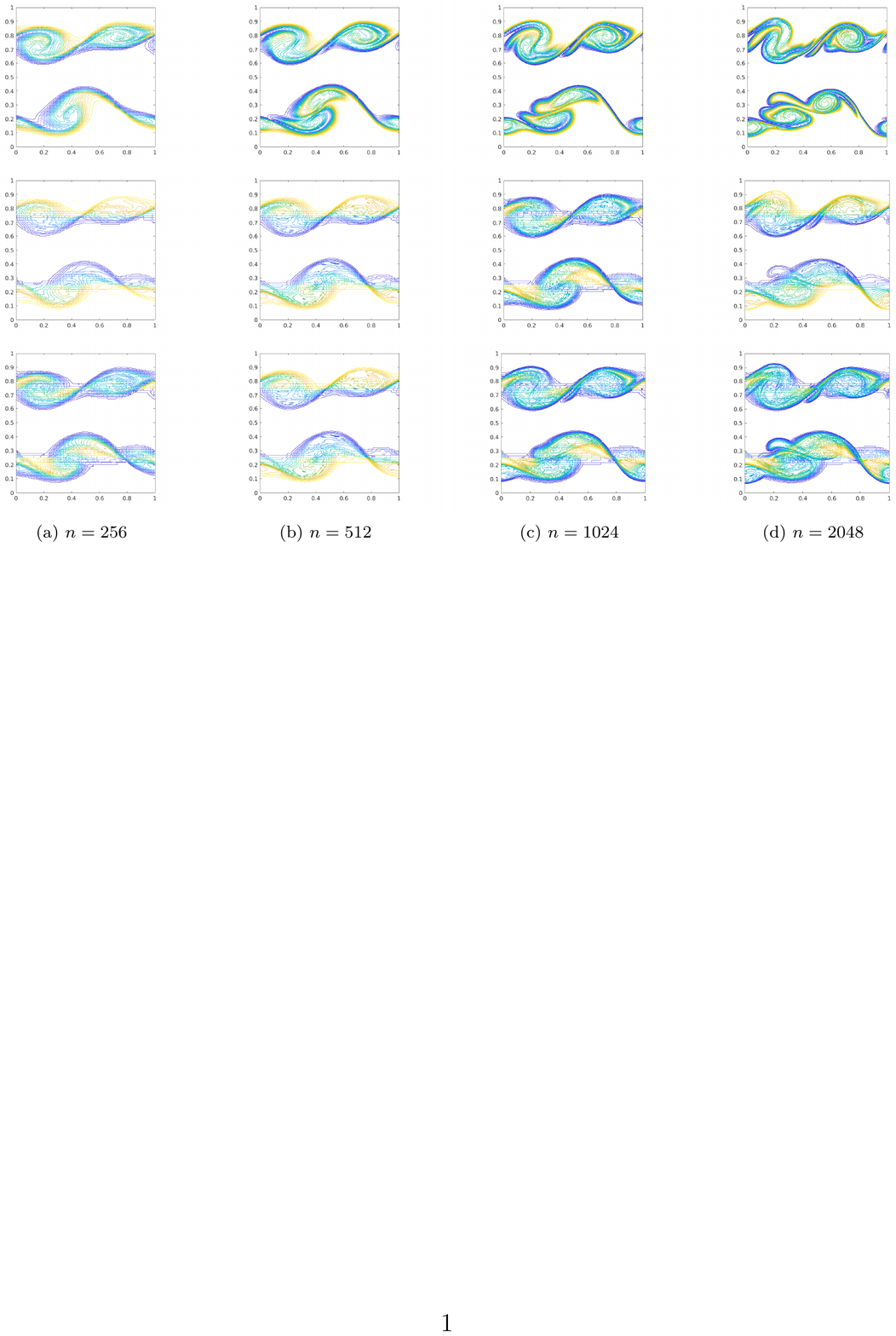}
	\caption{\small{Example \ref{example:2D-KH}: the entropy contours obtained on meshes with $n\times n$ cells. From top to bottom: entropy (top); Ces\'{a}ro averages of entropy (middle); the first variance of entropy (bottom). }}\label{figure:2D-KH-entropycontour}
\end{figure}
\restoregeometry

\begin{example}[\textbf{Richtmyer-Meshkov problem}]\label{example:2D-RM}\rm
	This example describes complex interactions of strong shocks with unstable interfaces.
	The initial data are given by
	\begin{equation*}
	u_1 = 0, \quad u_2 = 0, \quad
	p(x,0)
	=  \begin{cases}
	20, &  \mbox{if}~ r < 0.1, \\
	1, & \mbox{otherwise},
	\end{cases} \quad
	\rho(x,0)
	=  \begin{cases}
	2, &  \mbox{if}~ r < I(\vec x), \\
	1, & \mbox{otherwise},
	\end{cases}
	\end{equation*}
	where $r = \sqrt{(x_1 - 0.5)^2 + (x_2 - 0.5)^2}$ and the radial interface $I(\vec x) = 0.25 + \epsilon Y(\vec x)$ is perturbed by
	\begin{equation*}
	Y(\vec x) =  \sum_{m = 1}^{M} a^m \cos(\phi +  b^m ),
	\end{equation*}
	with $\phi = \arccos( (x_2 - 0.5)/ r)$.
	The parameters $a^m, b^m$ are arbitrary, but fixed numbers chosen such that
	 $a^m \in [0,1],\,\sum_{m = 1}^{M} a^m = 1$ and $b^m \in [-\pi, \pi],\,m = 1, \cdots, M$.
	In the simulation we set $\epsilon = 0.01, T = 4$ and $N = 2048$.
	
	Figure \ref{figure:2D-RM-error} shows the errors $E_1, E_2, E_3, E_4$ of $\rho, m_1, m_2, E, S$ obtained using different meshes $n = 32, \cdots, 1024$, see also Tables \ref{tabel:2D-RM-density}, \ref{tabel:2D-RM-entropy}.
	Moreover,  the contours of $\rho$ and $S$ are shown in Figures \ref{figure:2D-RM-densitycontour}, \ref{figure:2D-RM-entropycontour}, respectively.
	
	The figures and tables clearly indicate only a weak converge of single simulations obtained by the finite volume method \eqref{eq:semi-discrete-RPflux-phi}.
	Indeed, the numerical solutions do not converge strongly in $L^1$-norm.
	On the other hand, the Ces\`{a}ro  average and the first variance of the numerical solutions, as well as the Wasserstein distance of the corresponding Dirac distributions converge strongly in $L^1$-norm.
	These results again confirm our theoretical analysis on the convergence of the finite volume method \eqref{eq:semi-discrete-RPflux-phi}.
	In the Richtmyer-Meshkov case the limiting solution is not a weak solution but a dissipative measure-valued solution.
\end{example}

\begin{figure}[htbp]
	\setlength{\abovecaptionskip}{0.cm}
	\setlength{\belowcaptionskip}{-0.cm}
	\centering
	\begin{subfigure}{0.243\textwidth}
		\includegraphics[width=\textwidth]{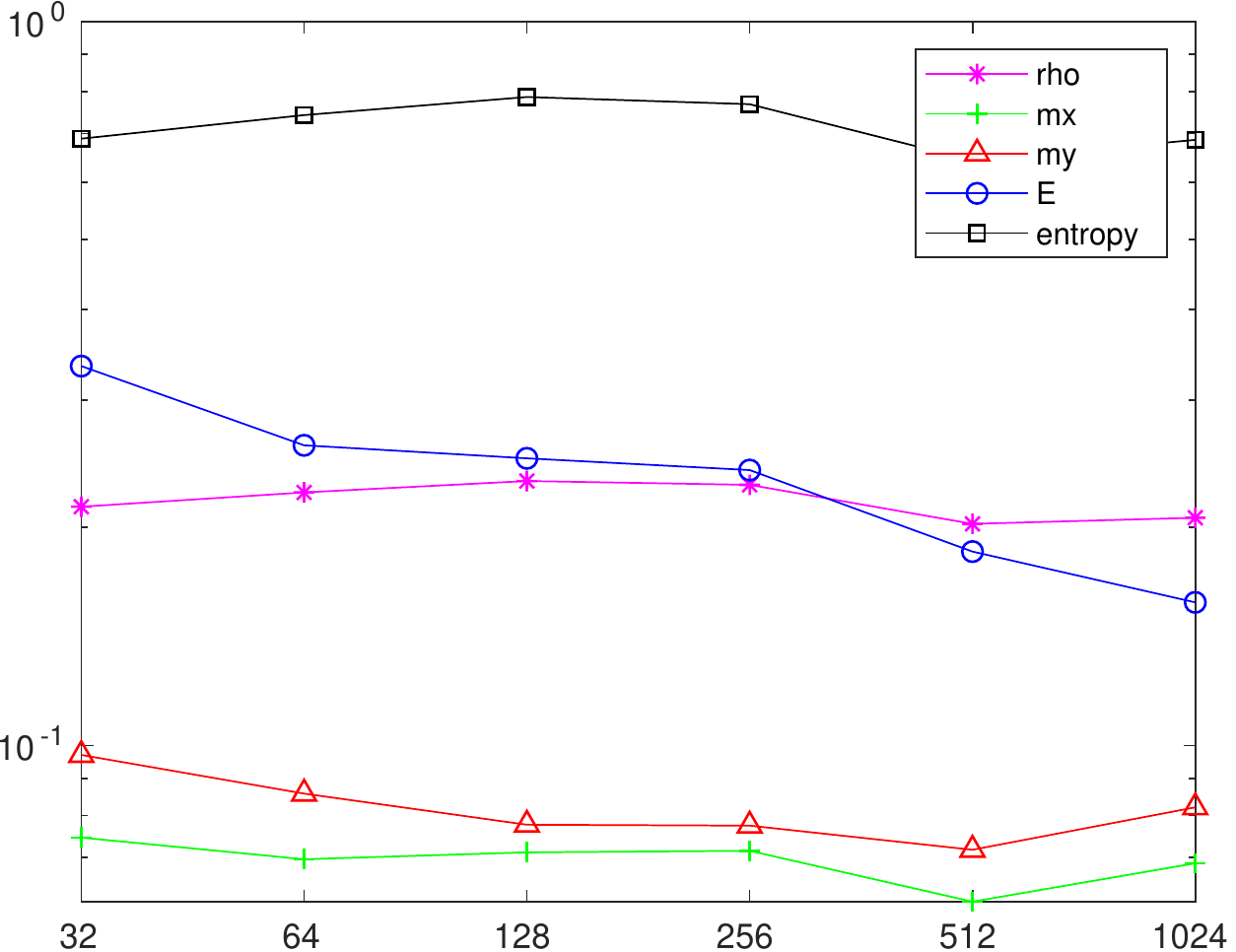}
		\caption{$E_1$}
	\end{subfigure}	
	\begin{subfigure}{0.243\textwidth}
		\includegraphics[width=\textwidth]{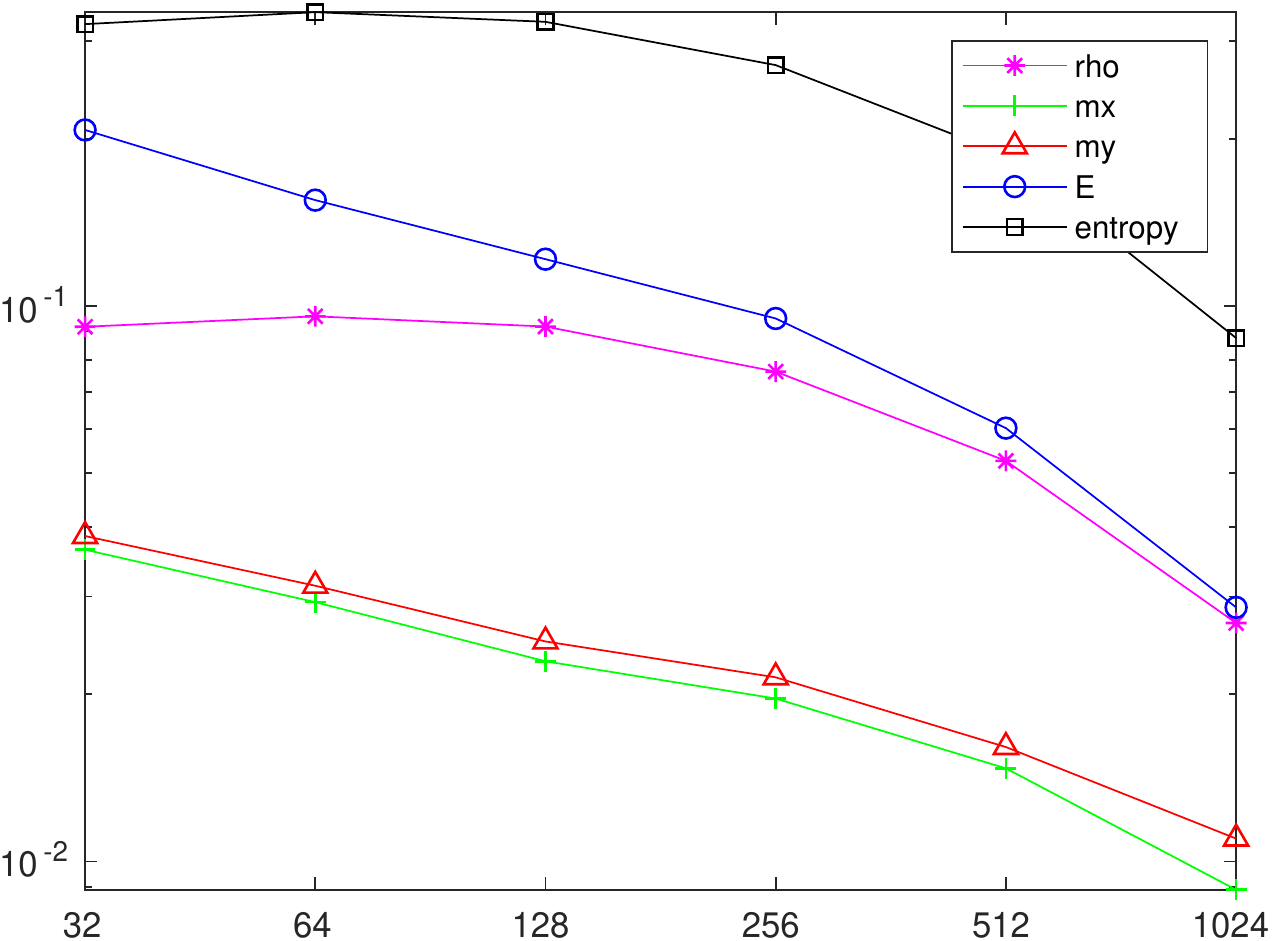}
		\caption{$E_2$}
	\end{subfigure}	
	\begin{subfigure}{0.243\textwidth}
		\includegraphics[width=\textwidth]{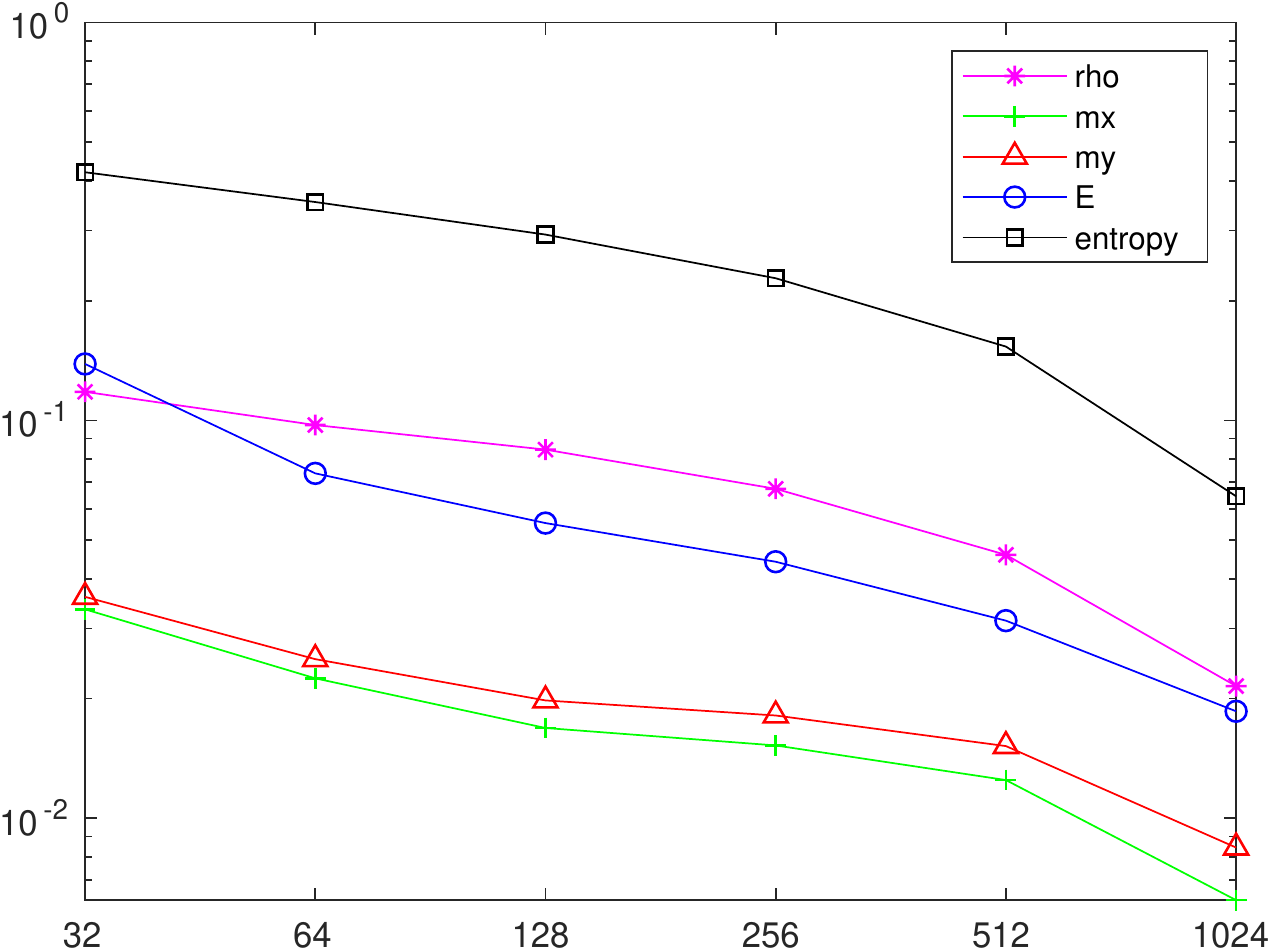}
		\caption{$E_3$}
	\end{subfigure}	
	\begin{subfigure}{0.243\textwidth}
		\includegraphics[width=\textwidth]{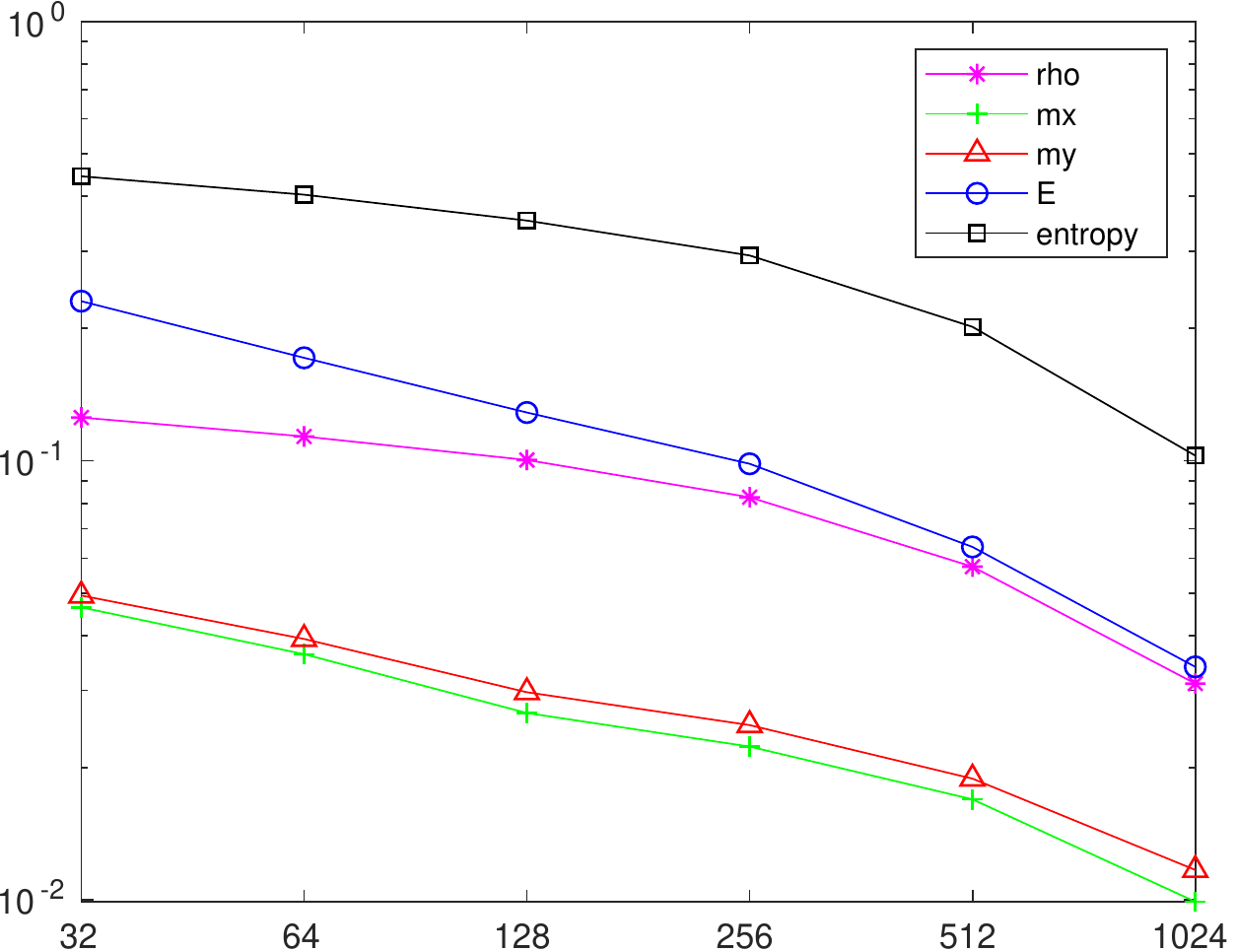}
		\caption{$E_4$}
	\end{subfigure}	
	\caption{\small{Example \ref{example:2D-RM}: the errors obtained  on different meshes.}}\label{figure:2D-RM-error}
\end{figure}

\newgeometry{left=2.5cm,right=2.5cm,top=3cm,bottom=3cm}
\begin{table}[t]
	\centering
	\caption{Example \ref{example:2D-RM}: the errors and convergence rates for density.  } \label{tabel:2D-RM-density}
	\begin{tabular}{|c|cc|cc|cc|cc|}
		\hline
		\multirow{2}{*}{$n$} & \multicolumn{2}{c|}{$E_1$ } & \multicolumn{2}{c|}{$E_2$ } & \multicolumn{2}{c|}{$E_3$ }  & \multicolumn{2}{c|}{$E_4$ } \\
		\cline{2-9}
		& error & order &  error & order &  error & order  &  error & order \\
		\hline
		\hline
		32 & 0.2136 & - & 0.0917 & - & 0.1180 & - & 0.1254 & - \\
		
		64 & 0.2236 & -0.0656 & 0.0958 & -0.0626 & 0.0973 & 0.2774 & 0.1136 & 0.1433\\
		
		128 & 0.2318 & -0.0523 & 0.0918 & 0.0613 & 0.0844 & 0.2051 & 0.1004 & 0.1782\\
		
		256 & 0.2290 & 0.0175 & 0.0762 & 0.2692 & 0.0673 & 0.3274 & 0.0826 & 0.2816\\
		
		512 & 0.2023 & 0.1790 & 0.0526 & 0.5337 & 0.0459 & 0.5512 & 0.0574 & 0.5249\\
		
		1024 & 0.2063 & -0.0280 & 0.0269 & 0.9694 & 0.0215 & 1.0959 & 0.0311 & 0.8835\\
		\hline
	\end{tabular}
\end{table}

\begin{table}[t]
	\centering
	\caption{Example \ref{example:2D-RM}: the errors and convergence rates for entropy.  } \label{tabel:2D-RM-entropy}
	\begin{tabular}{|c|cc|cc|cc|cc|}
		\hline
		\multirow{2}{*}{$n$} & \multicolumn{2}{c|}{$E_1$ } & \multicolumn{2}{c|}{$E_2$ } & \multicolumn{2}{c|}{$E_3$ }  & \multicolumn{2}{c|}{$E_4$ } \\
		\cline{2-9}
		& error & order &  error & order &  error & order  &  error & order \\
		\hline
		\hline
		32 & 0.6887 & - & 0.3215 & - & 0.4205 & - & 0.4440 & - \\
		
		64 & 0.7423 & -0.1082 & 0.3378 & -0.0716 & 0.3539 & 0.2487 & 0.4032 & 0.1391\\
		
		128 & 0.7862 & -0.0830 & 0.3246 & 0.0578 & 0.2931 & 0.2718 & 0.3521 & 0.1957\\
		
		256 & 0.7684 & 0.0330 & 0.2713 & 0.2588 & 0.2277 & 0.3646 & 0.2932 & 0.2639\\
		
		512 & 0.6425 & 0.2582 & 0.1869 & 0.5373 & 0.1533 & 0.5704 & 0.2017 & 0.5396\\
		
		1024 & 0.6860 & -0.0945 & 0.0876 & 1.0943 & 0.0645 & 1.2502 & 0.1027 & 0.9737\\
		\hline
	\end{tabular}
\end{table}


\begin{figure}[b]
		\setlength{\abovecaptionskip}{-0.1cm}
	\setlength{\belowcaptionskip}{-0.2cm}
	\centering
	\includegraphics[width=\textwidth]{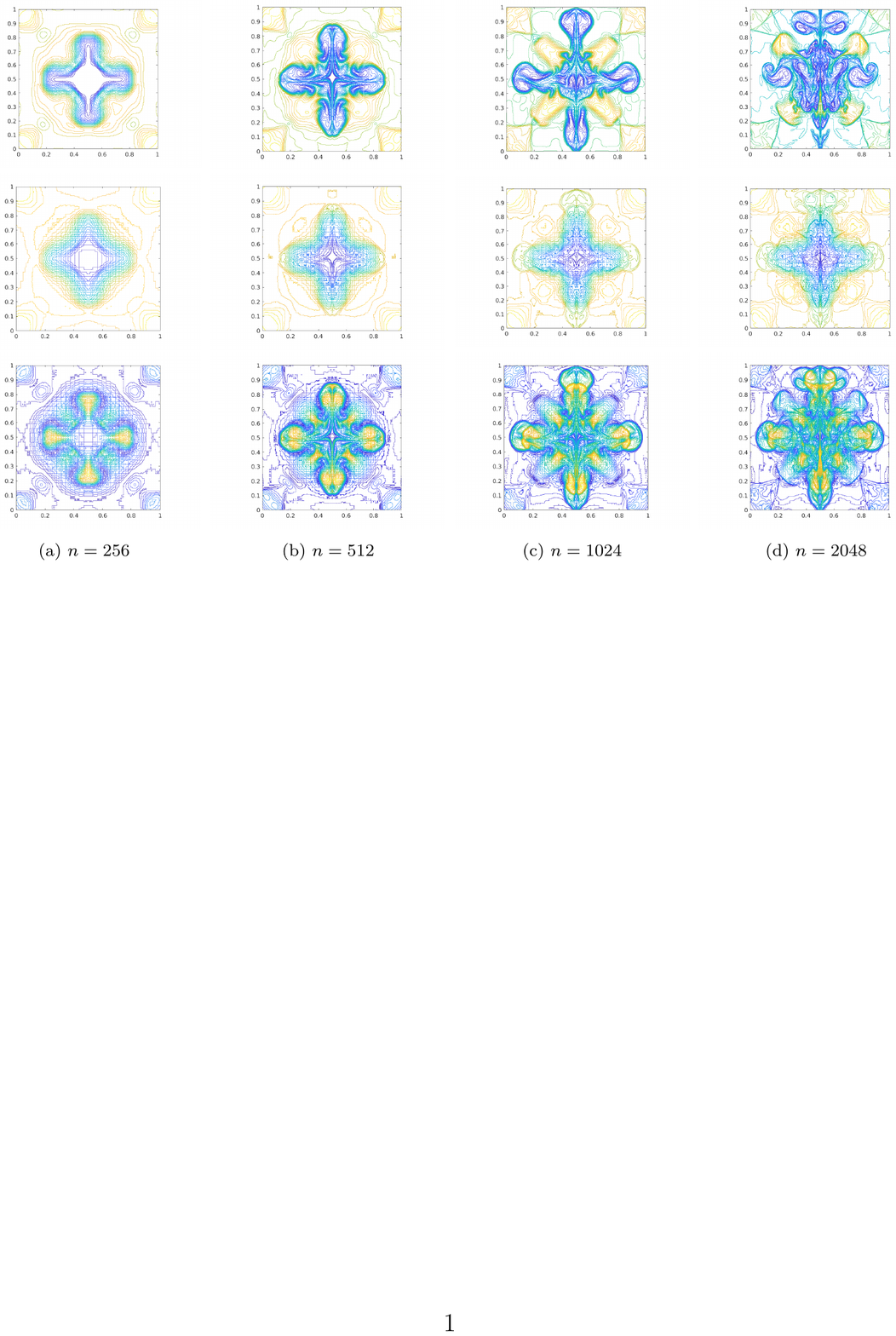}
	\caption{\small{Example \ref{example:2D-RM}: the density contours obtained on meshes with $n \times n$ cells. From top to bottom: density (top); Ces\'{a}ro averages of density (middle); the first variance of density (bottom). }}\label{figure:2D-RM-densitycontour}
\end{figure}
\restoregeometry

\begin{figure}[htpb]
		\setlength{\abovecaptionskip}{-0.1cm}
	\setlength{\belowcaptionskip}{-0.2cm}
	\centering
	\includegraphics[width=\textwidth]{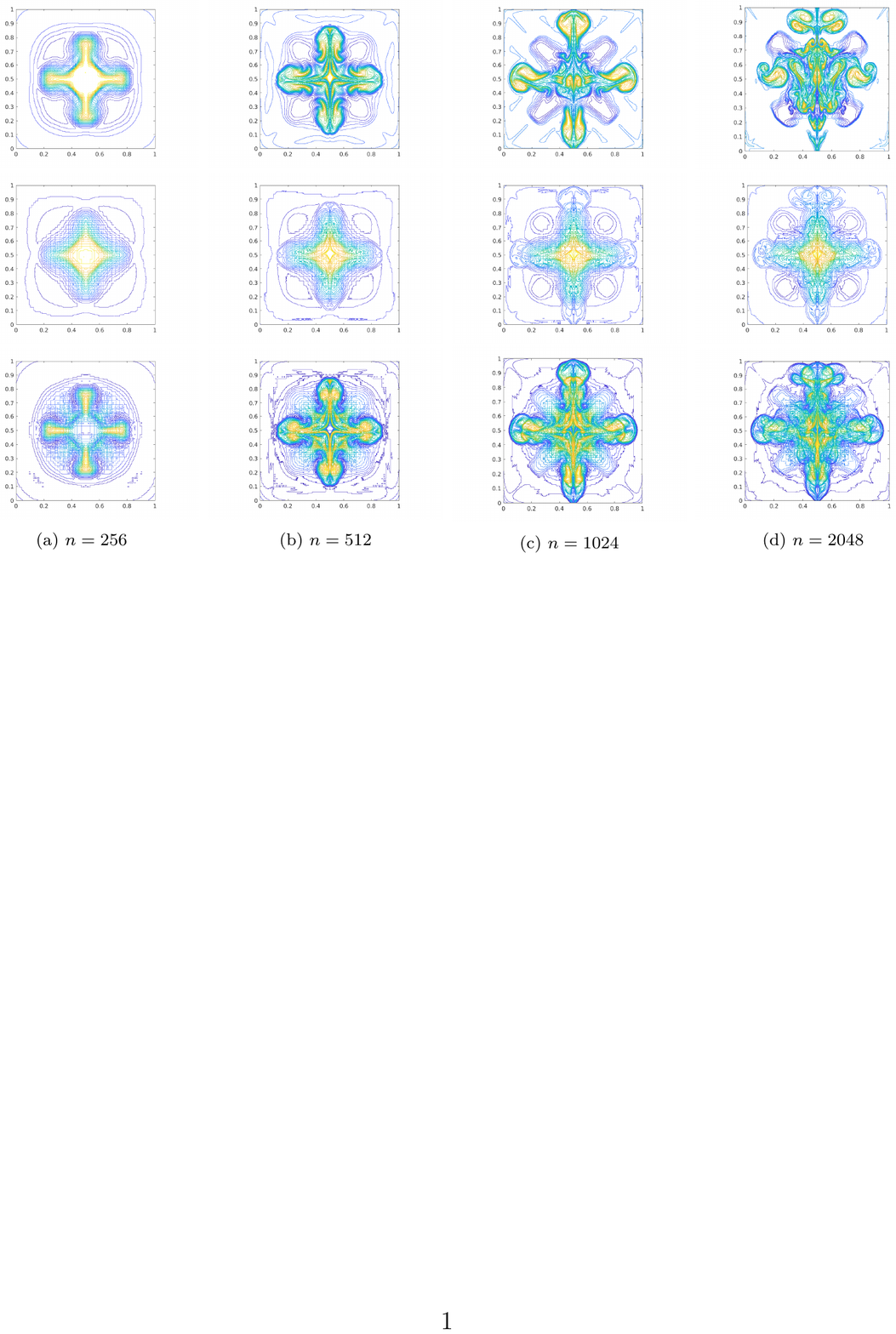}
	\caption{\small{Example \ref{example:2D-RM}: the entropy contours obtained on meshes with $n \times n$ cells. From top to bottom: entropy (top); Ces\'{a}ro averages of entropy (middle); the first variance of entropy (bottom). }}\label{figure:2D-RM-entropycontour}
\end{figure}


\section{Conclusions}
\noindent We have shown the convergence of the Godunov finite volume method \eqref{eq:semi-discrete-RPflux-phi}, which is based on the exact solution of a local Riemann problem for the complete compressible Euler system.
Hereby we have only assumed that our numerical solutions belong to a physically non-degenerate region, i.e.~we have a uniform lower bound on density and an upper bound on energy. The latter is equivalent to the existence of a uniform lower bound on entropy Hessian matrix, see  Lemma~\ref{lemma:equivalent-statement}.
Using the fact that the finite volume method \eqref{eq:semi-discrete-RPflux-phi} is entropy stable, the entropy inequality together with the explicit lower bound of the entropy Hessian matrix yield the weak BV estimate \eqref{eq:U-l2norm-bound}.
Further, we have shown that the difference between the exact solution of the local Riemann problem and its initial data, i.e.~numerical solution at time $t_n$, can be controlled by the jump of numerical solution itself.
Consistency of the method is proved in Theorem~\ref{theorem:consistency}.
Weak convergence to a generalized solution, the dissipative measure-valued (DMV) solution, is showed in Theorem~\ref{thm:weak-convergence}.
Applying the novel tool of $\mathcal{K}$-convergence we obtained strong convergence to the expected value and first variance of a dissipative measure-valued solution, see Theorem~\ref{thm:K-convergence}.
Theorem~\ref{thm:strong-convergence} summarizes the cases of strong convergence of numerical solutions of \eqref{eq:semi-discrete-RPflux-phi}.
The latter happens if the limit is a weak or $C^1$ solution to the Euler system \eqref{eq:multiD-Euler}.
In addition, applying  the DMV-strong uniqueness principle \cite{Brezina-Feireisl:2018a} we also have strong convergence of our numerical solutions on the lifespan of the strong solution.

Numerical results for the spiral problem, the Kelvin-Helmholtz problem and the Richtmyer-Meshkov problem confirm results of theoretical analysis.
In particular, we observe that for highly oscillatory limit, as is the case of the Richtmyer-Meshkov test, single numerical solutions do not  converge strongly.
On the other hand, we obtain the strong convergence of observable quantities, i.e. the expected values and first variances.

Although the Godunov finite volume method is one of the most classical schemes for hyperbolic conservation laws, its numerical analysis for multidimensional systems still remains open in general.
The main goal of this paper was to fill this gap and illustrate the application of the recently developed concepts of generalized DMV solution and $\mathcal{K}$-convergence on an iconic example, the multidimensional Euler system.
In future it will be interesting to extend the obtained results to general multidimensional hyperbolic conservation laws.

\section*{Acknowledgments}
\noindent M.L. has been funded by the German Science Foundation (DFG) under the collaborative research projects TRR SFB 165 (Project A2) and TRR SFB 146 (Project C5).
Y.Y. has been funded by Sino-German (CSC-DAAD) Postdoc Scholarship Program in 2020 - Project number 57531629.

\appendix
\renewcommand{\appendixname}{}

\phantomsection
\addcontentsline{toc}{section}{Reference}

\section{Lipschitz continuity of $F$} \label{section:Lipschitzcontinuity}
\noindent The aim of this section is to give the estimates of $\|\defjump{(\vec F_1)_h}\|$, also denoted by $\|\defjump{F_h}\|$.

\begin{lemma} \label{lemma:useful-001}
	It holds
	\begin{align}
		& \defave{|\vec u|^2} \leq (\|\defave{\vec u} \| +  \|\defjump{\vec u}\|)^2, \quad
		 \defave{a^2} \leq 2 \defave{a}^2, \quad
		 \defave{p} \defave{\rho}^{-1}  \leq  \frac{4}{\gamma} \defave{a}^2,\\
		& \defave{\rho |\vec u|^2} \defave{\rho}^{-1} \leq  2(\|\defave{\vec u} \| +  \|\defjump{\vec u}\|)^2, \quad
		\| \defave{\vec m} \| \defave{\rho}^{-1} \leq \|\defave{\vec u} \| + \frac12 \|\defjump{\vec u}\|.
	\end{align}
\end{lemma}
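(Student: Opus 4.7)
The overall strategy is to substitute the inn/out decomposition
\[
\defin{a} = \defave{a} - \tfrac{1}{2}\defjump{a}, \qquad \defout{a} = \defave{a} + \tfrac{1}{2}\defjump{a},
\]
and, whenever a quotient of the form (weighted average)/(total weight) appears with positive weights, to bound it by the maximum of the weighted quantities. The only other ingredients are the triangle inequality and the positivity of $\rho$ and of the sound speed $a = \sqrt{\gamma p/\rho}$, both of which are guaranteed under Assumption~\ref{assumption}.

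For the first inequality I would expand $\defave{|\vec u|^2} = \tfrac{1}{2}(|\defin{\vec u}|^2 + |\defout{\vec u}|^2)$ and estimate each summand by $(\|\defave{\vec u}\| + \tfrac{1}{2}\|\defjump{\vec u}\|)^2 \leq (\|\defave{\vec u}\| + \|\defjump{\vec u}\|)^2$ via the triangle inequality, then average. For the second estimate I would use the positivity $\defin{a},\defout{a} > 0$ to write $\defin{a}^2 + \defout{a}^2 \leq (\defin{a} + \defout{a})^2$, whence $\defave{a^2} \leq 2\defave{a}^2$ at once.

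The third and fourth inequalities share the same structure. For (iii) I would substitute $p = \rho a^2/\gamma$ so that
\[
\frac{\defave{p}}{\defave{\rho}} = \frac{1}{\gamma}\cdot\frac{\defin{\rho}\,\defin{a}^2 + \defout{\rho}\,\defout{a}^2}{\defin{\rho} + \defout{\rho}} \leq \frac{1}{\gamma}\max(\defin{a}^2,\defout{a}^2) \leq \frac{4}{\gamma}\defave{a}^2,
\]
where the last step uses once more $\defin{a},\defout{a}\geq 0$, so $\max(\defin{a},\defout{a}) \leq \defin{a}+\defout{a} = 2\defave{a}$. The same weighted-average-by-maximum argument applied to $\defave{\rho|\vec u|^2}/\defave{\rho}$, followed by the triangle-inequality bound $|\defin{\vec u}|,|\defout{\vec u}| \leq \|\defave{\vec u}\| + \tfrac{1}{2}\|\defjump{\vec u}\|$, delivers (iv) with room to spare (the stated factor of $2$ on the right is loose).

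Finally, for the momentum inequality I would apply the triangle inequality to $\defave{\vec m} = \tfrac{1}{2}(\defin{\rho}\defin{\vec u} + \defout{\rho}\defout{\vec u})$, divide by $\defave{\rho} = \tfrac{1}{2}(\defin{\rho}+\defout{\rho})$, and bound the resulting $\rho$-weighted average of $\|\defin{\vec u}\|,\|\defout{\vec u}\|$ by the maximum of the two, which is in turn at most $\|\defave{\vec u}\| + \tfrac{1}{2}\|\defjump{\vec u}\|$. There is no real obstacle in this lemma; the only conceptual step is the observation that for nonnegative weights a convex combination is dominated by the maximum, and this is exactly what lets the density factors cancel cleanly in (iii), (iv) and (v).
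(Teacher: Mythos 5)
Your proof is correct, and all five estimates check out: in (v) you recover exactly the stated constant, and in (i), (iii), (iv) your bounds are at least as sharp as those claimed. However, you take a genuinely different elementary route than the paper. The paper never unpacks the trace values: it works entirely in the average/jump calculus, using the exact product identity $\defave{fg} = \defave{f}\,\defave{g} + \frac14\defjump{f}\,\defjump{g}$ together with the observation that $|\defjump{f}| \leq 2\defave{f}$ whenever $f \geq 0$ (the paper's printed intermediate step $|\defjump{f}| \leq \min(f_L,f_R)$ is evidently a typo for $\max$, or simply for $f_L + f_R$; only the final bound $|\defjump{f}| \leq 2\defave{f}$ is used). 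Applying this with $(f,g) = \left(\frac{p}{\rho},\rho\right)$, $\left(\rho,|\vec u|^2\right)$ and $(\rho,\vec u)$ yields the third, fourth and fifth inequalities, while the first two follow from the exact identities $\defave{a^2} = \defave{a}^2 + \frac14\defjump{a}^2$ and $\defave{|\vec u|^2} = \|\defave{\vec u}\|^2 + \frac14\|\defjump{\vec u}\|^2$. You instead decompose into $\defin{(\cdot)}$ and $\defout{(\cdot)}$ and exploit that a convex combination with positive weights $\defin{\rho}, \defout{\rho}$ is dominated by its maximum, converting back via $\max(\defin{a},\defout{a}) \leq \defin{a} + \defout{a} = 2\defave{a}$ and $\|\defin{\vec u}\|, \|\defout{\vec u}\| \leq \|\defave{\vec u}\| + \frac12\|\defjump{\vec u}\|$. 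What the paper's route buys is uniformity: one two-line identity does all the work, and the computation stays in the jump/average notation used throughout Appendix~\ref{section:Lipschitzcontinuity}. What your route buys is that the density weights cancel structurally in the quotients $\defave{p}\defave{\rho}^{-1}$, $\defave{\rho|\vec u|^2}\defave{\rho}^{-1}$ and $\|\defave{\vec m}\|\defave{\rho}^{-1}$ without any product-rule bookkeeping, and the convexity observation makes it transparent why these quotients are controlled by trace quantities alone; both arguments rest on the same positivity of $\rho$ and $a$ guaranteed under Assumption~\ref{assumption}.
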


\begin{proof}
	If $f \geq 0$ then $|\defjump{f}| \leq \min(f_L,f_R) \leq 2\defave{f}$.
	After some calculations we obtain
	\begin{align*}
		& \defave{a^2} = \defave{a}^2 + \frac14\defjump{a}^2 \leq 2 \defave{a}^2,\quad
		 \defave{|\vec u|^2} = \defave{\vec u}\cdot \defave{\vec u} + \frac14 \defjump{\vec u}\cdot  \defjump{\vec u} \leq (\|\defave{\vec u} \| +  \|\defjump{\vec u}\|)^2.
	\end{align*}
	Moreover, we obtain
	\begin{align*}
		& \defave{p} = \defave{\frac{p}{\rho} \cdot \rho} = \defave{\frac{p}{\rho} } \defave{\rho} + \frac14 \defjump{\frac{p}{\rho} } \defjump{\rho} \leq 2\defave{\frac{p}{\rho} } \defave{\rho} = \frac{2}{\gamma}\defave{a^2} \defave{\rho} , \\
		& \defave{\rho |\vec u|^2} = \defave{\rho} \defave{|\vec u|^2} + \frac14 \defjump{\rho} \defjump{|\vec u|^2} \leq 2\defave{\rho} \defave{|\vec u|^2} , \\
		& \| \defave{\vec m} \| = \|\defave{\rho} \defave{\vec u} + \frac14 \defjump{\rho} \defjump{\vec u}\|  \leq \defave{\rho} \|\defave{\vec u}\| + \frac12 \defave{\rho} \|\defjump{\vec u}\|,
	\end{align*}
	which concludes the proof.
\end{proof}

\begin{lemma} \label{lemma:jump-U-V}
It holds
\begin{align}
	& \|\defjump{\vec u}\|
	\leq   \defave{\rho}^{-1} \| \defave{\vec u}\| \cdot |\defjump{\rho}| + \defave{\rho}^{-1} \|\defjump{\vec m}\|,  \label{eq:jump-u-U}\\
	&  |\defjump{p}|  \lesssim   ( \|\defave{\vec u} \| + \|\defjump{\vec u}\|)^2 \cdot |\defjump{\rho} | + ( \|\defave{\vec u} \| + \|\defjump{\vec u}\|) \cdot \|\defjump{\vec m}\| + |\defjump{E}|. \label{eq:jump-p-U}
\end{align}
\end{lemma}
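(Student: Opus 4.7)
The plan is to derive both inequalities from the algebraic jump-product rule $\defjump{ab} = \defave{a}\defjump{b} + \defjump{a}\defave{b}$ applied to the defining relations $\vec m = \rho \vec u$ and $E = \tfrac{1}{2}\rho|\vec u|^2 + p/(\gamma-1)$, combined with the auxiliary averaging bounds of Lemma~\ref{lemma:useful-001} and the uniform lower bound $\defave{\rho} \geq \underline{\rho} > 0$ from Lemma~\ref{lemma:lemma-U-bounded}. Both estimates are essentially algebraic, so the real task is to apply the product rule in the right order and at the right moment to express everything in the conservative variables.

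For \eqref{eq:jump-u-U}, I would apply the product rule componentwise to $\vec m = \rho \vec u$ (with $\rho$ the scalar factor) to get $\defjump{\vec m} = \defave{\rho}\defjump{\vec u} + \defjump{\rho}\defave{\vec u}$, solve for $\defave{\rho}\defjump{\vec u}$, divide by the strictly positive scalar $\defave{\rho}$, and take norms with the triangle inequality. This delivers the first bound in one step.

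For \eqref{eq:jump-p-U}, I would invert the equation of state as $p = (\gamma-1)\bigl(E - \tfrac{1}{2}\rho|\vec u|^2\bigr)$, so that $\defjump{p} = (\gamma-1)\defjump{E} - \tfrac{\gamma-1}{2}\defjump{\rho|\vec u|^2}$. The jump $\defjump{\rho|\vec u|^2}$ is then expanded twice by the product rule: first $\defjump{\rho|\vec u|^2} = \defave{\rho}\defjump{|\vec u|^2} + \defjump{\rho}\defave{|\vec u|^2}$, and then $\defjump{|\vec u|^2} = 2\,\defave{\vec u}\cdot \defjump{\vec u}$. The resulting term $2\defave{\rho}\,\defave{\vec u}\cdot\defjump{\vec u}$ is the key one: I would estimate it by $2\|\defave{\vec u}\|\cdot\bigl(\defave{\rho}\|\defjump{\vec u}\|\bigr)$ and then insert \eqref{eq:jump-u-U} just proved, replacing $\defave{\rho}\|\defjump{\vec u}\|$ by $\|\defave{\vec u}\|\,|\defjump{\rho}| + \|\defjump{\vec m}\|$. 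The remaining term $\defjump{\rho}\,\defave{|\vec u|^2}$ is controlled by the bound $\defave{|\vec u|^2}\leq (\|\defave{\vec u}\|+\|\defjump{\vec u}\|)^2$ from Lemma~\ref{lemma:useful-001}. Collecting the pieces and absorbing all $\gamma$-dependent constants into the symbol $\lesssim$ yields \eqref{eq:jump-p-U}.

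There is no genuine obstacle here; the calculation is mechanical. The only tactical point deserving attention is the moment to plug in \eqref{eq:jump-u-U}: it must happen exactly when the product $\defave{\rho}\|\defjump{\vec u}\|$ appears, so that the final bound is expressed in the conservative variables $(\rho,\vec m,E)$ rather than in the primitive velocity $\vec u$, as the statement demands. Mild care is needed with the scalar-product inequality $|\defave{\vec u}\cdot\defjump{\vec u}|\leq \|\defave{\vec u}\|\,\|\defjump{\vec u}\|$ (which is compatible with the $L^1$-vector-norm convention of the paper) and with collecting multiplicative $\gamma$-dependent constants.
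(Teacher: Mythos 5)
Your proof is correct. For \eqref{eq:jump-u-U} it coincides with the paper's argument verbatim: expand $\defjump{\vec m} = \defave{\rho}\defjump{\vec u} + \defjump{\rho}\defave{\vec u}$, solve for $\defjump{\vec u}$, divide by $\defave{\rho}>0$ and take norms. For \eqref{eq:jump-p-U} you take a mildly but genuinely different route. The paper stays in conservative variables: it expands $\defjump{|\vec m|^2}$ via the product rule applied to $\frac{|\vec m|^2}{\rho}\cdot\rho$, solves for $\defjump{\frac{|\vec m|^2}{\rho}}$, and converts $\defave{\rho}^{-1}\defave{\frac{|\vec m|^2}{\rho}}$ and $\defave{\rho}^{-1}\|\defave{\vec m}\|$ into velocity expressions using the last two bounds of Lemma~\ref{lemma:useful-001}; it never invokes \eqref{eq:jump-u-U}. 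You instead expand the kinetic energy in primitive variables, $\defjump{\rho|\vec u|^2} = 2\defave{\rho}\,\defave{\vec u}\cdot\defjump{\vec u} + \defjump{\rho}\defave{|\vec u|^2}$, and re-insert \eqref{eq:jump-u-U} in the form $\defave{\rho}\|\defjump{\vec u}\| \leq \|\defave{\vec u}\|\,|\defjump{\rho}| + \|\defjump{\vec m}\|$, needing only the single bound $\defave{|\vec u|^2}\leq(\|\defave{\vec u}\|+\|\defjump{\vec u}\|)^2$ from Lemma~\ref{lemma:useful-001}. The two computations are of equal length and land on the same right-hand side after absorbing $\|\defave{\vec u}\|^2\leq(\|\defave{\vec u}\|+\|\defjump{\vec u}\|)^2$ and the $\gamma$-dependent constants into $\lesssim$; your version buys a sequential structure in which the pressure estimate is built on the velocity estimate and uses less of Lemma~\ref{lemma:useful-001}, while the paper's buys never introducing velocity jumps until the final conversion step. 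Your side remarks are apposite: $|\defave{\vec u}\cdot\defjump{\vec u}|\leq\|\defave{\vec u}\|\,\|\defjump{\vec u}\|$ does hold for the paper's $L^1$-type vector norm (via H\"older with an $\ell^\infty$ factor), so no step fails.
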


\begin{proof}
Some manipulations give
\begin{align*}
&\defjump{\vec m} = \defave{\rho} \defjump{\vec u} + \defjump{\rho} \defave{\vec u},
\end{align*}
which implies 
\begin{align*}
	& \|\defjump{\vec u}\|
	= \left\|\frac{\defjump{\vec m}- \defjump{\rho} \defave{\vec u}}{\defave{\rho}} \right\|
	\leq   \defave{\rho}^{-1} \| \defave{\vec u}\| \cdot |\defjump{\rho}| + \defave{\rho}^{-1} \|\defjump{\vec m}\| .
\end{align*}

On the other hand, we have
\begin{align*}
\defjump{\frac{|\vec m|^2}{\rho} \cdot \rho}
& = \defjump{|\vec m|^2} = \defave{\vec m} \cdot  \defjump{\vec m}
	= \defave{\frac{|\vec m|^2}{\rho}} \cdot \defjump{\rho} +  \defjump{\frac{|\vec m|^2}{\rho}} \cdot \defave{\rho},
\end{align*}
which gives
\begin{equation*}
\left|\defjump{ \frac{|\vec m|^2}{\rho}} \right| \leq \defave{\rho}^{-1}  \defave{\frac{|\vec m|^2}{\rho}} \cdot |\defjump{\rho} | +  \defave{\rho}^{-1} \|\defave{\vec m}\| \cdot \|\defjump{\vec m}\|.
\end{equation*}
Consequently, we obtain
\begin{align*}
 |\defjump{p}|
 & =  (\gamma-1) |\defjump{E - \frac{|\vec m|^2}{2\rho}}| \leq  (\gamma-1) |\defjump{E}| +\frac{\gamma-1}{2} |\defjump{ \frac{|\vec m|^2}{\rho}}| \nonumber \\
  & \leq   \frac{\gamma-1}{2} \defave{\rho}^{-1}   \defave{\frac{|\vec m|^2}{\rho}} \cdot |\defjump{\rho} | + \frac{\gamma-1}{2} \defave{\rho}^{-1} \|\defave{\vec m}\| \cdot \|\defjump{\vec m}\| +(\gamma-1) |\defjump{E}|,
\end{align*}
which concludes the proof.
\end{proof}

\begin{lemma}
It holds
{\small
\begin{align*}
\left|\defjump{F^{(1)}}\right| & \leq \|\defjump{\vec m}\|, \\
\left|\defjump{F^{(2)}}\right| & \lesssim  (\|\defave{\vec u} \| +  \|\defjump{\vec u}\|)^2 \cdot |\defjump{\rho} |
									+ (\|\defave{\vec u} \| +  \|\defjump{\vec u}\|) \cdot \|\defjump{\vec m}\|
									+  |\defjump{E}|,\\
\left|\defjump{F^{(3)}}\right| & \lesssim  (\|\defave{\vec u} \| +  \|\defjump{\vec u}\|)^2 \cdot |\defjump{\rho} |
									+ (\|\defave{\vec u} \| +  \|\defjump{\vec u}\|) \cdot \|\defjump{\vec m}\|,\\
\left|\defjump{F^{(4)}}\right| & \lesssim  (\|\defave{\vec u} \| +  \|\defjump{\vec u}\|)^2 \cdot |\defjump{\rho} |
									+ (\|\defave{\vec u} \| +  \|\defjump{\vec u}\|) \cdot \|\defjump{\vec m}\|,\\
\left|\defjump{F^{(5)}}\right| & \lesssim  \left( (\|\defave{\vec u} \| +  \|\defjump{\vec u}\|)^2 + \defave{a}^2 \right)  \| \defave{\vec u}\|\cdot  |\defjump{\rho}|
									 +  (\|\defave{\vec u} \| +  \|\defjump{\vec u}\|)^2\cdot  \|\defjump{\vec m}\|
									+ \|\defave{\vec u}\| \cdot \defjump{E}.	
\end{align*}
}%
\end{lemma}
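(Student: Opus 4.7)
The plan is to verify each of the five bounds componentwise using the product-jump identity $\defjump{ab} = \defave{a}\defjump{b} + \defjump{a}\defave{b}$, and then to rewrite every appearance of $\defjump{\vec u}$ or $\defjump{p}$ on the resulting right-hand side in terms of the conservative jumps $|\defjump{\rho}|$, $\|\defjump{\vec m}\|$, $|\defjump{E}|$ via Lemma~\ref{lemma:jump-U-V}, while controlling every averaged quantity by Lemma~\ref{lemma:useful-001} and the uniform bounds from Lemma~\ref{lemma:lemma-U-bounded}.

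The bound for $F^{(1)} = m_1$ is immediate since $|\defjump{m_1}| \leq \|\defjump{\vec m}\|$. For $F^{(2)} = u_1 m_1 + p$, I would split $\defjump{F^{(2)}} = \defave{u_1}\defjump{m_1} + \defjump{u_1}\defave{m_1} + \defjump{p}$. The first summand is bounded by $\|\defave{\vec u}\|\,\|\defjump{\vec m}\|$; the pressure jump is controlled directly by \eqref{eq:jump-p-U}; and for the middle summand I use the estimate $\|\defave{\vec m}\| \leq \defave{\rho}(\|\defave{\vec u}\| + \tfrac12\|\defjump{\vec u}\|)$ from Lemma~\ref{lemma:useful-001} combined with the rearranged form $\defave{\rho}\,\|\defjump{\vec u}\| \leq \|\defave{\vec u}\|\,|\defjump{\rho}| + \|\defjump{\vec m}\|$ of \eqref{eq:jump-u-U}. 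This produces the admissible combinations $\|\defave{\vec u}\|^2|\defjump{\rho}|$, $\|\defave{\vec u}\|\,\|\defjump{\vec u}\|\,|\defjump{\rho}|$, and $(\|\defave{\vec u}\| + \|\defjump{\vec u}\|)\,\|\defjump{\vec m}\|$, which collapse into the stated form after applying the elementary inequality $\|\defave{\vec u}\|^2 + \|\defave{\vec u}\|\,\|\defjump{\vec u}\| + \|\defjump{\vec u}\|^2 \leq (\|\defave{\vec u}\| + \|\defjump{\vec u}\|)^2$. The bounds for $F^{(3)} = u_1 m_2$ and $F^{(4)} = u_1 m_3$ follow by the same mechanism, simply without the pressure contribution.

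For the energy flux $F^{(5)} = u_1(E + p)$, I decompose
\[
\defjump{F^{(5)}} = \defave{u_1}\defjump{E} + \defjump{u_1}\defave{E} + \defave{u_1}\defjump{p} + \defjump{u_1}\defave{p}.
\]
The first term yields the $\|\defave{\vec u}\|\cdot|\defjump{E}|$ contribution directly. The third term, $\defave{u_1}\defjump{p}$, is controlled by multiplying \eqref{eq:jump-p-U} by $\|\defave{\vec u}\|$, which produces the $(\|\defave{\vec u}\|+\|\defjump{\vec u}\|)^2\|\defave{\vec u}\|\,|\defjump{\rho}|$ cluster together with subordinate $\|\defave{\vec u}\|\,\|\defjump{\vec m}\|$ and $\|\defave{\vec u}\|\,|\defjump{E}|$ pieces. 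The term $\defjump{u_1}\defave{p}$ is bounded by $\defave{a}^2\defave{\rho}\,\|\defjump{\vec u}\|$ via $\defave{p}\defave{\rho}^{-1} \leq \tfrac{4}{\gamma}\defave{a}^2$ from Lemma~\ref{lemma:useful-001}, then reduced through \eqref{eq:jump-u-U}, generating the $\defave{a}^2\|\defave{\vec u}\|\,|\defjump{\rho}|$ contribution. Finally $\defjump{u_1}\defave{E}$ is treated analogously by pulling out the uniform upper bound on $\defave{E}$ supplied by Lemma~\ref{lemma:lemma-U-bounded} and invoking \eqref{eq:jump-u-U} once more.

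The main obstacle is purely algebraic bookkeeping: every intermediate term generated by repeated application of the product-jump identity must be sorted into exactly one of the three clusters on the right-hand side (the $(\|\defave{\vec u}\|+\|\defjump{\vec u}\|)^2$-cluster, the $\defave{a}^2$-cluster, or the $|\defjump{E}|$-cluster), and one has to be careful when absorbing mixed terms such as $\|\defjump{\vec u}\|^2\defave{\rho}$ using \eqref{eq:jump-u-U} iteratively. No analytic ingredient beyond Lemmas~\ref{lemma:lemma-U-bounded}, \ref{lemma:useful-001} and~\ref{lemma:jump-U-V} is required.
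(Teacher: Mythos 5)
Your route coincides with the paper's own proof in all essentials: the paper likewise verifies the five bounds componentwise via $\defjump{ab}=\defave{a}\defjump{b}+\defjump{a}\defave{b}$ and then closes by ``combining Lemma~\ref{lemma:useful-001} and Lemma~\ref{lemma:jump-U-V}''. (The paper decomposes $F^{(2)},F^{(3)},F^{(4)}$ as $\rho\cdot u u_j$, i.e.\ $\defave{u^2}\defjump{\rho}+\defave{\rho}\defjump{u^2}$, where you use $u_1\cdot m_j$; this is immaterial, and your $F^{(1)}$--$F^{(4)}$ and your treatment of $\defave{u_1}\defjump{E}$, $\defave{u_1}\defjump{p}$ in $F^{(5)}$ all close correctly.)

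There is, however, one genuine gap, localized in your handling of $\defjump{u_1}\defave{E}$. The paper never invokes Lemma~\ref{lemma:lemma-U-bounded} here; it keeps $\defave{E+p}$ intact and controls it \emph{algebraically}, writing $E+p=\tfrac12\rho|\vec u|^2+\tfrac{\gamma}{\gamma-1}p$ and using the two estimates $\defave{\rho|\vec u|^2}\defave{\rho}^{-1}\leq 2(\|\defave{\vec u}\|+\|\defjump{\vec u}\|)^2$ and $\defave{p}\defave{\rho}^{-1}\leq\tfrac{4}{\gamma}\defave{a}^2$ --- which is exactly why those two bounds appear in Lemma~\ref{lemma:useful-001}. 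This yields $\defave{E+p}\,\|\defjump{\vec u}\|\lesssim\big((\|\defave{\vec u}\|+\|\defjump{\vec u}\|)^2+\defave{a}^2\big)\defave{\rho}\,\|\defjump{\vec u}\|$, after which \eqref{eq:jump-u-U} applies, and the lemma remains an unconditional algebraic fact. Your substitute --- bounding $\defave{E}$ by $\overline{E}$ and then invoking \eqref{eq:jump-u-U} --- first makes the appendix lemma conditional on Assumption~\ref{assumption}, and second is too lossy: it produces a term $\|\defjump{\vec m}\|$ with a coefficient independent of $\vec u$ and $a$, which cannot be sorted into the stated cluster $(\|\defave{\vec u}\|+\|\defjump{\vec u}\|)^2\,\|\defjump{\vec m}\|$. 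Concretely, take $\defjump{\rho}=\defjump{p}=0$ and $|\vec u|=O(\varepsilon)$: the stated right-hand side is $O(\varepsilon^3)$, while your bound for this single term is $O(\varepsilon)$. Separately, note that your reduction of $\defjump{u_1}\defave{p}$ via \eqref{eq:jump-u-U} inevitably also produces the cluster $\defave{a}^2\,\|\defjump{\vec m}\|$, which you discard silently when claiming everything ``collapses into the stated form''; in fact the paper's own chain produces it too, and the example above (where $|\defjump{F^{(5)}}|=\Theta(\varepsilon)$ while the printed right-hand side is $\Theta(\varepsilon^3)$) shows the printed $F^{(5)}$ bound cannot hold without it --- the middle term should read $\big((\|\defave{\vec u}\|+\|\defjump{\vec u}\|)^2+\defave{a}^2\big)\|\defjump{\vec m}\|$. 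That omission is a typo in the lemma as stated, harmless for its only application, Lemma~\ref{lemma:usefulestimation}(1), but a correct writeup must carry the term rather than drop it.
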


\begin{proof}
	Some calculations give
	{\small
		\begin{align*}
		\left|\defjump{F^{(1)}}\right| & = |\defjump{\rho u}| \leq \| \defjump{ \vec m} \|, \\
		\left|\defjump{F^{(2)}}\right| & = |\defjump{\rho u^2+p}| \leq \defave{u^2} \cdot  |\defjump{\rho}| + 2  \defave{\rho} \cdot |\defave{u}| \cdot |\defjump{u}| + |\defjump{p}|\\
		& \leq \defave{|\vec u|^2} \cdot  |\defjump{\rho}| + 2  \defave{\rho} \cdot \|\defave{\vec u}\| \cdot \|\defjump{\vec u}\| + |\defjump{p}|,\\
		\left|\defjump{F^{(3)}}\right| & = |\defjump{\rho uv}| \leq |\defave{uv}| \cdot |\defjump{\rho}| + \defave{\rho} \cdot |\defave{u}| \cdot |\defjump{v}|  + \defave{\rho} \cdot |\defave{v}| \cdot |\defjump{u}|   \\
		& \leq  \defave{|\vec u|^2} \cdot |\defjump{\rho}| + 2\defave{\rho} \cdot \|\defave{\vec u}\| \cdot \|\defjump{\vec u} \|,   \\
		\left|\defjump{F^{(4)}}\right| &  = |\defjump{\rho uw}| \leq |\defave{uw}| \cdot |\defjump{\rho}| + \defave{\rho} \cdot |\defave{u}| \cdot |\defjump{w}|  + \defave{\rho} \cdot |\defave{w}| \cdot |\defjump{u}|   \\
		& \leq  \defave{|\vec u|^2} \cdot |\defjump{\rho}| + 2\defave{\rho} \cdot \|\defave{\vec u}\| \cdot \|\defjump{\vec u} \|
		\end{align*}
	}%
	and
	{\small
		\begin{align*}
		\left|\defjump{F^{(5)}}\right| & = \left|\defjump{ u(E+p) } \right| \leq \defave{E+p} \cdot |\defjump{u}| + |\defave{u}| \cdot |\defjump{E}| + |\defave{u}| \cdot |\defjump{p}|	\\
		& \leq \defave{E+p} \cdot \|\defjump{\vec u}\| + \|\defave{\vec u}\| \cdot |\defjump{E}| + \|\defave{\vec u}\| \cdot |\defjump{p}|.
		\end{align*}
	}%
	Combining Lemma~\ref{lemma:useful-001} and Lemma~\ref{lemma:jump-U-V} concludes the proof.
\end{proof}

\section{Lower bound of the entropy Hessian matrix} \label{section:lowerbound}
\noindent In this section we give the lower bound of entropy Hessian matrix $\frac{d^2 \eta}{d \vec U^2}$ for $d = 3$.
Harten \cite{Harten:1983a} has already given the explicit expression of $\frac{d^2 \eta}{d \vec U^2}$ in two-dimensional case.
Analogously, we can obtain its three-dimensional explicit expression with $\eta = -\rho \chi(S), \chi \in C^2(\mathbb{R})$
\begin{align*}
	&	\frac{d^2 \eta}{d \vec U^2} =  \frac{d \vec \nu}{d \vec U}  =  \left(\frac{\gamma-1}{p}\right)^2  \rho \chi'(S)\cdot \vec D,
\end{align*}
where 
{
\begin{align*}
& \vec D^{(1)} :=
	\begin{pmatrix}
		\frac{|\vec u|^4 }4 + \frac{a_*^4}{\gamma} - R(\frac{|\vec u|^2}2   - a_*^2)^2 		\\
		-u \left(\frac{|\vec u|^2}{2}(1-R) + Ra_*^2 \right)	\\
		-v \left( \frac{|\vec u|^2}{2}(1-R) + Ra_*^2 \right) \\
		-w \left(\frac{|\vec u|^2}{2}(1-R) + Ra_*^2 \right)		\\
		\frac{|\vec u|^2}{2} (1-R) -  a_*^2 \left( \frac1{\gamma}  - R\right)
	\end{pmatrix},  \quad a_*^2 = \frac{\gamma}{\gamma-1} \cdot \frac{p}{\rho}, \quad R = \frac{\chi{''}(S)}{\chi'(S)},\\
& \vec D^{(2)} :=
	\begin{pmatrix}
		-u \left(\frac{|\vec u|^2}{2}(1-R) + Ra_*^2 \right) 	\\
		u^2(1-R) + a_*^2 /\gamma \\
		uv (1-R) \\
		uw (1-R) \\
		-u(1-R)
	\end{pmatrix}, \quad
	\vec D^{(3)} :=
	\begin{pmatrix}
		-v \left(\frac{|\vec u|^2}{2}(1-R) + Ra_*^2 \right) 	\\
		uv (1-R) \\
		v^2(1-R) + a_*^2 /\gamma \\	
		vw (1-R) \\
		-v(1-R)
	\end{pmatrix},\\
&	\vec D^{(4)} :=
	\begin{pmatrix}
		-w \left(\frac{|\vec u|^2}{2}(1-R) + Ra_*^2 \right) 	\\
		uw (1-R) \\
		vw (1-R) \\
		w^2(1-R) + a_*^2 /\gamma \\
		-w(1-R)
	\end{pmatrix},\quad
	\vec D^{(5)} :=
	\begin{pmatrix}
		\frac{|\vec u|^2}{2} (1-R) -  a_*^2 \left( \frac1{\gamma}  - R\right) 	\\
		-u(1-R) \\
		-v(1-R) \\
		-w(1-R)\\
		1-R
	\end{pmatrix}.
\end{align*}}%
Moreover, the symmetric matrix $\vec D$ is positive definite if and only if $R < 1/\gamma$.

Consider the special case $\eta = -\rho S/(\gamma-1)$, i.e., $\chi'(S) = 1/(\gamma-1)$ and $\chi''(S) = 0$.
After some calculations we obtain the characteristic function $f(\lambda)$ of the matrix $\vec D$
\begin{align*}
	f(\lambda) & = \left(\lambda - \frac{a^2}{\gamma} \right)^2 \cdot g(\lambda) ,
\end{align*}
where
\begin{align*}
	g(\lambda)
	& = \lambda^3 - \frac{1}{4\gamma}  \bigg( 4 a^4 + 4 a^2 + \gamma  (|\vec u|^2 +2)^2\bigg) \lambda^2   \\
	& \quad +\frac{a^2}{4 \gamma^2} \bigg( 4 a^4 - 4 a^2 + \gamma \big[4( |\vec u|^2 + 1) a^2 + (|\vec u|^2 +2)^2\big]\bigg) \lambda -\frac{a^{6} (\gamma-1)}{\gamma^3}.
\end{align*}
Taking derivation of $g(\lambda)$ gives
\begin{align*}
	g'(\lambda)
	& = 3\lambda^2 - \frac{1}{2\gamma}  \bigg( 4 a^4 + 4 a^2 + \gamma  (|\vec u|^2 +2)^2\bigg) \lambda   \\
	& \quad +\frac{a^2}{4 \gamma^2} \bigg( 4 a^4 - 4 a^2 + \gamma \big[4( |\vec u|^2 + 1) a^2 + (|\vec u|^2 +2)^2\big]\bigg) .
\end{align*}

Denote the roots of $g(\lambda) = 0$ by $\lambda_1, \lambda_2, \lambda_3$ with $\lambda_1 < \lambda_2 < \lambda_3$, the roots of $g'(\lambda) = 0$ by $\lambda_1^*$ and $\lambda_2^*$ with 
 \begin{equation} \label{eq:eigen-df}
 \lambda_1^* \leq  \frac{1}{12 \gamma} \left(4 a^4 + 4 a^2 + \gamma  (|\vec u|^2 +2)^2 \right) \leq \lambda_2^*.
 \end{equation}
As known, $\frac{d^2 \eta}{d \vec U^2} $ is positive definite \cite{Harten:1983a},
which implies
\begin{equation}
0 < \lambda_1 \leq \lambda_1^* \leq \lambda_2 \leq \lambda_2^* \leq \lambda_3.
\end{equation}

\begin{lemma} \label{lemma:eigen-upbound}
	It holds
	\begin{equation}
		\lambda_1 \leq \frac{a^2}{\gamma} (\gamma-1)^{1/3} < \lambda_2.
	\end{equation}
\end{lemma}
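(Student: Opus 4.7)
The plan is to evaluate $g$ directly at $\lambda^{*}:=\frac{a^{2}}{\gamma}(\gamma-1)^{1/3}$, exploiting a deliberate cancellation, and then apply AM--GM. Writing $g(\lambda)=\lambda^{3}-b\lambda^{2}+c\lambda-d$ with
$b=\tfrac{1}{4\gamma}(4a^{4}+4a^{2}+\gamma(|\vec u|^{2}+2)^{2})$,
$c=\tfrac{a^{2}}{4\gamma^{2}}(4a^{4}-4a^{2}+\gamma[4(|\vec u|^{2}+1)a^{2}+(|\vec u|^{2}+2)^{2}])$,
$d=\tfrac{a^{6}(\gamma-1)}{\gamma^{3}}$,
I observe the arithmetic identity $(\lambda^{*})^{3}=d$, so the cubic and constant contributions cancel and
\[
g(\lambda^{*})=\lambda^{*}\bigl(c-b\lambda^{*}\bigr).
\]
Thus $g(\lambda^{*})\geq 0$ is equivalent to $c\geq b\lambda^{*}$.

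The core analytic step is to prove this inequality. Setting $s:=(\gamma-1)^{1/3}\in(0,1]$ (so $\gamma=1+s^{3}$) and clearing denominators reduces $c-b\lambda^{*}\ge 0$ to the statement that
\[
H(a^{2}):=4(1-s)(a^{2})^{2}+4\bigl[\gamma(|\vec u|^{2}+1)-1-s\bigr]a^{2}+\gamma(1-s)(|\vec u|^{2}+2)^{2}\geq 0.
\]
This is a quadratic in $a^{2}$ with non-negative leading and constant coefficients. I then split into two cases. In Case A, when $\gamma(|\vec u|^{2}+1)\geq 1+s$, the linear coefficient is also non-negative and $H(a^{2})\geq H(0)\geq 0$ on $a^{2}\ge 0$. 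In Case B, the vertex of the parabola lies at a positive $a^{2}$, so I verify non-negativity at the vertex via discriminant analysis. Using $\gamma=1+s^{3}$, this boils down to the one-variable inequality $s(1+s)\leq 2\sqrt{1+s^{3}}$ for $s\in(0,1]$, which after squaring becomes the elementary polynomial inequality $s^{3}-3s^{2}+4s-4\leq 0$ on $[0,1]$; this is immediate from $p(0)=-4$, $p(1)=-2$ and $p(2)=0$. This finishes Step~1.

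For the second part, I exploit the fact that $\lambda^{*}$ is literally the geometric mean of the three (positive) roots, since $(\lambda^{*})^{3}=d=\lambda_{1}\lambda_{2}\lambda_{3}$. AM--GM then gives the sandwich $\lambda_{1}\leq\lambda^{*}=(\lambda_{1}\lambda_{2}\lambda_{3})^{1/3}\leq\tfrac{\lambda_{1}+\lambda_{2}+\lambda_{3}}{3}\leq\lambda_{3}$. Combining $\lambda^{*}\leq\lambda_{3}$ with $g(\lambda^{*})\geq 0$, which (by the sign pattern of the monic cubic $g$ with three positive roots) places $\lambda^{*}\in[\lambda_{1},\lambda_{2}]\cup[\lambda_{3},\infty)$, excludes the upper interval and yields $\lambda_{1}\leq\lambda^{*}\leq\lambda_{2}$. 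The strict inequality $\lambda^{*}<\lambda_{2}$ then follows from strictness in the second case of Step~1 except in the degenerate configuration where all three roots coalesce, which is ruled out by the explicit coefficient structure of $g$ under Assumption~\ref{assumption}.

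The principal obstacle is Case B of Step~1: the linear coefficient of $H$ can become negative, so the bound is not termwise and one has to use the precise relation $\gamma=1+s^{3}$ together with the trivial-looking but delicate scalar inequality $s^{3}-3s^{2}+4s-4\leq 0$ on $[0,1]$ to close the argument. Everything else is routine substitution and AM--GM.
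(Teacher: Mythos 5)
Your two main devices are sound and in part genuinely nicer than the paper's. The paper also evaluates $g$ at $\lambda^{*}=\frac{a^{2}}{\gamma}(\gamma-1)^{1/3}$, but it proves $g(\lambda^{*})\geq 0$ by regrouping the value directly and bounding it below by $\frac{a^{4}\gamma_{*}(1-\gamma_{*})}{\gamma^{3}}\left(a^{4}-\gamma_{*}(1+\gamma_{*})a^{2}+\gamma\right)$, closed by the discriminant check $\gamma_{*}^{2}(1+\gamma_{*})^{2}-4\gamma<0$. Your cancellation $(\lambda^{*})^{3}=d$, giving $g(\lambda^{*})=\lambda^{*}(c-b\lambda^{*})$, and the reduction $c-b\lambda^{*}=\frac{a^{2}}{4\gamma^{2}}H(a^{2})$ are algebraically correct; I also checked that the Case~B discriminant condition is monotone in $|\vec u|^{2}$ with worst case $|\vec u|=0$, which is exactly your inequality $s(1+s)\leq 2\sqrt{1+s^{3}}$, and that in Case~B one automatically has $s<1$, so the leading coefficient of $H$ is indeed positive there. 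Where you truly diverge is the localization of $\lambda^{*}$: the paper proves $\lambda^{*}<\frac{1}{12\gamma}\left(4a^{4}+4a^{2}+\gamma(|\vec u|^{2}+2)^{2}\right)\leq\lambda_{2}^{*}\leq\lambda_{3}$ (the midpoint of the critical points of $g$) via a second discriminant computation $(1-3\gamma_{*})^{2}-4\gamma\leq 0$, whereas your observation that $(\lambda^{*})^{3}=d=\lambda_{1}\lambda_{2}\lambda_{3}$ makes $\lambda^{*}$ the geometric mean of the roots, yielding $\lambda_{1}\leq\lambda^{*}\leq\lambda_{3}$ at a stroke and rendering that whole computation unnecessary; it even gives $\lambda_{1}\leq\lambda^{*}$ for free. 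One cosmetic repair: sampling $p(s)=s^{3}-3s^{2}+4s-4$ at $s=0,1,2$ proves nothing about a cubic on $[0,1]$; instead use $p(s)=(s-2)(s^{2}-s+2)$ with $s^{2}-s+2>0$, or $p'(s)=3(s-1)^{2}+1>0$ together with $p(1)=-2$.

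The genuine gap is your treatment of the strict inequality $\lambda^{*}<\lambda_{2}$. Equality in Step~1 is \emph{not} confined to a triple root: take $\gamma=2$ (so $s=1$) and $\vec u=0$. Then $H(a^{2})=8|\vec u|^{2}a^{2}=0$, hence $g(\lambda^{*})=0$, and explicitly
\begin{equation*}
g(\lambda)=\left(\lambda-\frac{a^{2}}{2}\right)\left(\lambda^{2}-\frac{a^{4}+2}{2}\,\lambda+\frac{a^{4}}{4}\right),
\end{equation*}
whose quadratic factor has the two distinct roots $\frac{1}{4}\left(a^{4}+2\pm\sqrt{a^{8}+4}\right)$, and these straddle $\frac{a^{2}}{2}$ because $(a^{4}-2a^{2}+2)^{2}=a^{8}+4-4a^{2}(a^{4}-2a^{2}+2)<a^{8}+4$. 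So all three roots are simple and $\lambda^{*}=\lambda_{2}$ exactly: no ``coefficient structure under Assumption~\ref{assumption}'' rules this out, and the step of your argument asserting that equality forces coalescing roots would fail. In fairness, the paper's own proof has the same lacuna --- its two displayed facts only place $\lambda^{*}$ in $[\lambda_{1},\lambda_{2}]$, and at $\gamma=2$, $\vec u=0$ the strict form of the lemma is false as stated --- and since only $\lambda_{1}\leq\lambda^{*}$ is used downstream (Lemma~\ref{lem:lowerbound}, Lemma~\ref{lemma:equivalent-statement}), the damage is nil. You should either weaken your conclusion to $\lambda_{1}\leq\lambda^{*}\leq\lambda_{2}$, or record that strictness holds precisely when $\gamma<2$ or $\vec u\neq 0$, which is what your case analysis actually delivers ($H>0$ in Case~B by the strict scalar inequality $s(1+s)<2\sqrt{1+s^{3}}$ for $s<1$, and in Case~A by the positive leading or linear term).
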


\begin{remark}
With the help of \eqref{eq:eigen-df} and the fact
\begin{equation*}
\begin{cases}
g(\lambda)  < 0,  ~\mbox{if}~ \lambda \in (0,\lambda_1); \quad &
g(\lambda)  > 0,  ~\mbox{if}~ \lambda \in (\lambda_1,\lambda_2);\\
g(\lambda)  < 0,  ~\mbox{if}~ \lambda \in (\lambda_2,\lambda_3);\quad &
g(\lambda)  > 0,  ~\mbox{if}~ \lambda \in (\lambda_3,0);
\end{cases}
\end{equation*}
we will prove Lemma~\ref{lemma:eigen-upbound} by showing
\begin{align*}
g\left( \frac{a^2}{\gamma} (\gamma-1)^{1/3} \right) \geq 0, \quad \frac{a^2}{\gamma} (\gamma-1)^{1/3}  < \frac{1}{12\gamma} \left(4 a^4 + 4 a^2 + \gamma  (|\vec u|^2 +2)^2\right)\leq \lambda_2^* \leq \lambda_3.
\end{align*}
\end{remark}

\begin{proof}
Denote  $(\gamma-1)^{1/3}$ by $\gamma_*$, i.e., $\gamma = \gamma_*^3 + 1$.
Then $\gamma \in (1,2]$ implies
\begin{equation}
\gamma_* \in (0,1].
\end{equation}
It is easy to show that
{\small
\begin{align*}
g\left( \frac{a^2 \gamma_*}{\gamma}  \right)
& =  \frac{a^4\gamma_{*}}{4\gamma^3} \bigg( (1-\gamma_{*}) \cdot (\gamma |\vec u|^4 + 4 \gamma |\vec u|^2) + 4 \gamma a^2 |\vec u|^2+ 4(1-\gamma_*) \cdot (a^4 - \gamma_{*}(1+\gamma_{*}) a^2 + \gamma) \bigg)\\
& \geq \frac{a^4\gamma_{*}(1-\gamma_*)}{\gamma^3} \bigg(  a^4 - \gamma_{*}(1+\gamma_{*}) a^2 + \gamma \bigg).
\end{align*}}%
Since
\begin{equation*}
\Delta = ( \gamma_{*}(1+\gamma_{*}) )^2 - 4 \gamma = \gamma_*^2(1-\gamma_*)^2 - 4 < 0,
\end{equation*}
we have $g\left( \frac{a^2 \gamma_*}{\gamma}  \right)  \geq 0$.

On the other hand, we have
\begin{align*}
	& \frac{4 a^4 + 4 a^2 + \gamma  (|\vec u|^2 +2)^2}{12\gamma} - \frac{a^2}{\gamma} \gamma_*
	\geq   \frac{4 a^4 + 4 a^2 + 4 \gamma - a^2 \gamma_*}{12\gamma}
	= \frac{1}{3\gamma} \left(a^4 +  (1-3\gamma_*)a^2 + \gamma \right).
\end{align*}
Hence,
\begin{align*}
\Delta & = (1-3\gamma_*)^2 - 4 \gamma = -3 - 6 \gamma_* + 9 \gamma_*^2 - 4 \gamma_*^3
			 	=  -3 (1-\gamma_*^2) - 6 (\gamma_* - \gamma_*^2) - 4 \gamma_*^3 \leq 0,
\end{align*}
which concludes the proof.
\end{proof}

\begin{lemma} \label{lemma:eigen-lowbound}
	It holds
	{
	\begin{equation}
	\lambda_1 \geq \frac{a^2(\gamma-1)}{\gamma} \min \left( \frac{a^2}{\gamma(|\vec u|^2+2)^2}, \, \frac{1}{4(a^2+1)},\, \frac{1}{4\gamma(|\vec u|^2+1)} \right).
	\end{equation}}%
\end{lemma}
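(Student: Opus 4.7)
The plan is to produce a single explicit positive value $\lambda_0$ (namely the right-hand side of the claimed inequality) and verify the two properties: (i) $g(\lambda_0)\le 0$, and (ii) $\lambda_0<\lambda_2$. Since $g$ is a cubic with three simple positive roots $\lambda_1<\lambda_2<\lambda_3$, so that $g(\lambda)\le 0$ precisely on $[0,\lambda_1]\cup[\lambda_2,\lambda_3]$, these two properties will force $\lambda_0\in[0,\lambda_1]$, which is exactly $\lambda_1\ge\lambda_0$. Write the cubic as $g(\lambda)=\lambda^3-A\lambda^2+B\lambda-C$ with
\[
A=\tfrac{1}{4\gamma}\bigl(4a^4+4a^2+\gamma(|\vec u|^2+2)^2\bigr),\qquad B=\tfrac{a^2}{4\gamma^2}\bigl(4a^4-4a^2+4\gamma(|\vec u|^2+1)a^2+\gamma(|\vec u|^2+2)^2\bigr),\qquad C=\tfrac{a^6(\gamma-1)}{\gamma^3}.
\]
Set $\mu_0=\min\bigl(\tfrac{a^2}{\gamma(|\vec u|^2+2)^2},\tfrac{1}{4(a^2+1)},\tfrac{1}{4\gamma(|\vec u|^2+1)}\bigr)$ and $\lambda_0=\tfrac{a^2(\gamma-1)}{\gamma}\mu_0$.

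To prove $g(\lambda_0)\le 0$ I will establish separately $\lambda_0^3\le A\lambda_0^2$ and $B\lambda_0\le C$. The first inequality reduces to $\lambda_0\le A$: note that $\mu_0\le\tfrac{1}{4(a^2+1)}$ and $\gamma-1\le\gamma$ give $\lambda_0\le\tfrac{1}{4\gamma}\le 1$, whereas using $(|\vec u|^2+2)^2\ge 4$ yields $A\ge 1$. The second, $B\lambda_0\le C$, is the heart of the argument. I will use the elementary inequality $4a^4-4a^2\le 4a^2(a^2+1)$ to split
\[
4\gamma^2 B\le a^2\bigl(4a^2(a^2+1)+4\gamma(|\vec u|^2+1)a^2+\gamma(|\vec u|^2+2)^2\bigr),
\]
so that the three summands on the right match exactly the three terms defining $\mu_0$. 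Multiplying by $\lambda_0$ and applying in turn the bounds
\[
\lambda_0(a^2+1)\le\tfrac{a^2(\gamma-1)}{4\gamma},\qquad \lambda_0(|\vec u|^2+1)\le\tfrac{a^2(\gamma-1)}{4\gamma^2},\qquad \lambda_0(|\vec u|^2+2)^2\le\tfrac{a^4(\gamma-1)}{\gamma^2},
\]
each of the three pieces contributes exactly $\tfrac{a^6(\gamma-1)}{4\gamma^3}$, so the sum is $\tfrac{3a^6(\gamma-1)}{4\gamma^3}\le C$.

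For property (ii) I will invoke Lemma~\ref{lemma:eigen-upbound} to obtain $\lambda_2>\tfrac{a^2(\gamma-1)^{1/3}}{\gamma}$. Since $\gamma\in(1,2]$ implies $(\gamma-1)^{2/3}\le 1$ and $\mu_0\le 1/4$, one has $(\gamma-1)^{2/3}\mu_0\le 1$, hence $\lambda_0\le\tfrac{a^2(\gamma-1)^{1/3}}{\gamma}<\lambda_2$, completing the conclusion $\lambda_0\le\lambda_1$.

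The main obstacle is not the verification itself but identifying the right three-term splitting of $B$, namely choosing to estimate $4a^4-4a^2$ by $4a^2(a^2+1)$ rather than any other majorant, so that the three factors $a^2+1$, $|\vec u|^2+1$, $|\vec u|^2+2$ appearing in the three pieces line up one-to-one with the three terms of $\mu_0$ and each contributes precisely $C/3$. Once this combinatorial matching is in place, every remaining inequality is a direct arithmetic check using $\gamma\in(1,2]$.
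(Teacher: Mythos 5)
Your proof is correct, and it shares the paper's core strategy---bound the cubic $g$ at or below the candidate threshold, match the three terms of the minimum one-to-one with the three dominant pieces of the linear coefficient $B$, and use Lemma~\ref{lemma:eigen-upbound} to localize the relevant root---but the execution differs in two respects worth recording. The paper substitutes $\lambda=\frac{a^2}{\gamma}x$ and proves $g<0$ on the \emph{entire} interval $0<x\le x_0$ via a decomposition with factors $x(1-x)$, which forces the restriction $x\le 1$ and introduces a fourth threshold $\left(\frac{\gamma-1}{4}\right)^{1/3}$ (pitting the $\lambda^3$ term against a quarter of the constant coefficient $C$) that must afterwards be shown redundant via $\left(\frac{\gamma-1}{4}\right)^{1/3}>\frac{\gamma-1}{4}$. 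You instead evaluate $g$ at the single point $\lambda_0$ and dispose of the cubic term by absorbing it into the quadratic one, $\lambda_0^3\le A\lambda_0^2$ via $\lambda_0\le 1\le A$, so your minimum has three entries from the start and your budget $B\lambda_0\le\frac{3}{4}C$ is slightly more generous than the paper's. The price of testing one point rather than an interval is the extra separation step $\lambda_0<\lambda_2$ needed to exclude $\lambda_0\in[\lambda_2,\lambda_3]$, which you correctly obtain from Lemma~\ref{lemma:eigen-upbound}; the paper's interval argument gets this localization for free. Two cosmetic slips, neither affecting validity: each of your three pieces contributes $\frac{a^6(\gamma-1)}{4\gamma^3}=C/4$, not $C/3$ as your closing remark states (the sum $\frac{3}{4}C\le C$, which is what you actually use, is correct); and the intermediate bound $\lambda_0\le\frac{1}{4\gamma}$ follows from $\gamma-1\le 1$ (valid since $\gamma\in(1,2]$) rather than from $\gamma-1\le\gamma$, which only yields $\lambda_0\le\frac14$---though all you need is $\lambda_0\le 1$, so either route suffices.
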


\begin{proof}
Let $\lambda = \frac{a^2}{\gamma} x$.
From Lemma~\ref{lemma:eigen-upbound} we know that the lower boundary of $\lambda_1$ can only be found within $ 0< x \leq 1$.
With the definition of $g(\lambda)$ the following decomposition gives
{
\begin{align*}
g\left(\frac{a^2}{\gamma} x\right)
= ~& \bigg(\frac{a^6x^3}{\gamma^3}  - \frac{a^6(\gamma-1)}{4\gamma^3} \bigg)  + \frac{a^4}{4\gamma^3} \bigg( -3a^2(\gamma-1) -  8 a^2x \\
	&\quad + \gamma(|\vec u|^2+2)^2x(1-x) + 4 a^2(a^2+1)x(1-x)  + 4\gamma a^2(|\vec u|^2+1)x\bigg) \\
< ~	& \bigg(\frac{a^6x^3}{\gamma^3}  - \frac{a^6(\gamma-1)}{4\gamma^3} \bigg)  + \frac{a^4}{4\gamma^3} \bigg( \Big[ \gamma(|\vec u|^2+2)^2x(1-x)-a^2(\gamma-1)  \Big] \\
			&\quad + \Big[ 4 a^2(a^2+1)x(1-x)-a^2(\gamma-1) \Big]  + \Big[4\gamma a^2(|\vec u|^2+1)x-a^2(\gamma-1) \Big] \bigg)	\\
< ~	& \bigg(\frac{a^6x^3}{\gamma^3}  - \frac{a^6(\gamma-1)}{4\gamma^3} \bigg)  + \frac{a^4}{4\gamma^3} \bigg( \Big[ \gamma(|\vec u|^2+2)^2x-a^2(\gamma-1)  \Big] \\
			&\quad + \Big[ 4 a^2(a^2+1)x-a^2(\gamma-1) \Big]  + \Big[4\gamma a^2(|\vec u|^2+1)x-a^2(\gamma-1) \Big] \bigg),	
\end{align*}}%
where the first inequality holds since {\small$- 8 a^2x < 0$}, and the second inequality due to {\small$1-x < 1$}.
Hence, if $x$ satisfies
{
\begin{equation*}
x \leq \min \left( \left(\frac{\gamma-1}{4}\right)^{1/3} , \,  \frac{a^2(\gamma-1)}{\gamma (u^2+2)^2}, \,
								\frac{\gamma-1}{4 (a^2+1)},\, \frac{\gamma-1}{4\gamma (u^2+1)}\right),
\end{equation*}}%
then $g(\lambda) < 0$.
This concludes the proof since $\left(\frac{\gamma-1}{4}\right)^{1/3} > \frac{\gamma-1}{4}$.
\end{proof}

Summarizing, we have the estimate for the smallest eigenvalue of the entropy Hessian matrix  $\frac{d^2 \eta}{d \vec U^2}$.
\begin{lemma} \label{lem:lowerbound}
	It holds  
	{\small
		\begin{equation}\label{eq:lowerbound}
		\min\left( \lambda\left( \frac{d^2 \eta}{d \vec U^2}\right) \right) \in \left[ \frac{(\gamma-1)^2}{p} \min \left( \frac{a^2}{\gamma(|\vec u|^2+2)^2}, \, \frac{1}{4(a^2+1)},\, \frac{1}{4\gamma(|\vec u|^2+1)} \right), \frac{(\gamma-1)^{4/3}}{p} \right],
		\end{equation}}%
	where $\lambda\left( \frac{d^2 \eta}{d \vec U^2}\right)$ represents the eigenvalue of matrix  $\frac{d^2 \eta}{d \vec U^2}$ and $\eta = -\frac{\rho S}{\gamma-1}$.
\end{lemma}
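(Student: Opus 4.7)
The plan is to assemble Lemma~\ref{lem:lowerbound} directly from the two preceding technical lemmas, using the factorisation of the characteristic polynomial and the explicit prefactor in the expression for $\nabla_{\vec U}^2 \eta$.

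First I would specialise the general formula $\frac{d^2\eta}{d\vec U^2} = \left(\frac{\gamma-1}{p}\right)^2 \rho\,\chi'(S)\,\vec D$ to the physical entropy $\eta = -\rho S/(\gamma-1)$, for which $\chi'(S) = 1/(\gamma-1)$ and $R = 0$. This reduces the scalar prefactor to $\frac{(\gamma-1)\rho}{p^2}$, so that the spectrum of $\nabla_{\vec U}^2 \eta$ is exactly the spectrum of the matrix $\vec D$ (evaluated with $R=0$) rescaled by this factor. Consequently, bounding the smallest eigenvalue of $\nabla_{\vec U}^2 \eta$ reduces to bounding $\min\lambda(\vec D)$.

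Next I would identify which root of the characteristic polynomial $f(\lambda) = (\lambda - a^2/\gamma)^2 g(\lambda)$ realises $\min\lambda(\vec D)$. The double eigenvalue $a^2/\gamma$ is known, while the remaining three eigenvalues are the roots $\lambda_1<\lambda_2<\lambda_3$ of the cubic $g$. By Lemma~\ref{lemma:eigen-upbound} one has $\lambda_1 \leq \frac{a^2}{\gamma}(\gamma-1)^{1/3}$, and since $\gamma \in (1,2]$ implies $(\gamma-1)^{1/3} \leq 1$, this shows $\lambda_1 \leq a^2/\gamma$. Combined with positive definiteness of $\nabla_{\vec U}^2\eta$ (hence of $\vec D$), established in \cite{Harten:1983a}, this identifies $\lambda_1 = \min\lambda(\vec D) > 0$.

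Finally I would apply the two-sided bounds on $\lambda_1$: the upper bound from Lemma~\ref{lemma:eigen-upbound} and the lower bound from Lemma~\ref{lemma:eigen-lowbound}. Multiplying both bounds by the prefactor $\frac{(\gamma-1)\rho}{p^2}$ and substituting the sound speed relation $a^2 = \gamma p/\rho$ yields the factor $\frac{a^2}{\gamma}\cdot\frac{(\gamma-1)\rho}{p^2} = \frac{\gamma-1}{p}$, from which the upper bound $\frac{(\gamma-1)^{4/3}}{p}$ and the lower bound $\frac{(\gamma-1)^2}{p}\cdot\min\bigl(\tfrac{a^2}{\gamma(|\vec u|^2+2)^2},\,\tfrac{1}{4(a^2+1)},\,\tfrac{1}{4\gamma(|\vec u|^2+1)}\bigr)$ both drop out directly.

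The non-routine parts have already been absorbed into Lemma~\ref{lemma:eigen-upbound} and Lemma~\ref{lemma:eigen-lowbound}, so the only remaining concern is bookkeeping: one must verify that the quantity $a$ appearing in the characteristic polynomial coincides with the sound speed and that the spectral-mapping argument (scalar multiple of $\vec D$) correctly identifies the smallest eigenvalue on both sides of the inequality. No additional estimates or case distinctions are anticipated, so I expect the argument to be short — essentially a two-line combination of the preceding lemmas together with the substitution $a^2 = \gamma p/\rho$.
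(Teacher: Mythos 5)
Your proposal is correct and follows essentially the same route as the paper, which obtains Lemma~\ref{lem:lowerbound} precisely by combining Lemma~\ref{lemma:eigen-upbound} and Lemma~\ref{lemma:eigen-lowbound} with the rescaling $\min\bigl(\lambda(d^2\eta/d\vec U^2)\bigr)=\lambda_1\cdot\frac{\gamma(\gamma-1)}{p\,a^2}$ (the prefactor $\bigl(\frac{\gamma-1}{p}\bigr)^2\rho\,\chi'(S)$ with $\chi'=1/(\gamma-1)$, $R=0$; cf.~\eqref{eq:relation-eta-lam}). Your explicit observation that $(\gamma-1)^{1/3}\leq 1$ for $\gamma\in(1,2]$, so that $\lambda_1$ rather than the double root $a^2/\gamma$ realises the minimum of the spectrum of $\vec D$, correctly makes precise a step the paper leaves implicit, and your arithmetic $\frac{a^2}{\gamma}\cdot\frac{(\gamma-1)\rho}{p^2}=\frac{\gamma-1}{p}$ reproduces both endpoints of \eqref{eq:lowerbound} exactly.
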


Specially, $\underline{\cdot}, \overline{\cdot}$ stand here for the infimum and  supremum, respectively. 
For example, 
\begin{equation}
\underline{\rho} = \inf (\rho), \quad \overline{E} = \sup  (E), \quad \underline{\eta} = \inf \left(\min\left( \lambda\left( \frac{d^2 \eta}{d \vec U^2}\right) \right)\right),
\end{equation}
where $\lambda\left( \frac{d^2 \eta}{d \vec U^2}\right)$ represents the eigenvalue of matrix  $\frac{d^2 \eta}{d \vec U^2}$.
\begin{lemma}\label{lemma:equivalent-statement}
	The following is equivalent:
	\begin{itemize}
		\item[(i)] $\underline{\rho} > 0, \overline{E} < \infty$  for a.e. $(t,\vec{x}) \in (0,T) \times \Omega  $; 
		\item[(ii)]  $\underline{\eta} > 0$   for a.e. $(t,\vec{x})\in (0,T) \times \Omega  $.
	\end{itemize}
\end{lemma}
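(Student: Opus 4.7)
Both implications rest on the two-sided spectral estimate of Lemma~\ref{lem:lowerbound}, which sandwiches the smallest eigenvalue of $d^2\eta/d\vec U^2$ between a strictly positive expression depending on $p,\rho,|\vec u|$ and the single quantity $(\gamma-1)^{4/3}/p$.

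For (i)$\Rightarrow$(ii) the argument is the short one: invoking Lemma~\ref{lemma:lemma-U-bounded} converts the hypotheses $\underline\rho>0$ and $\overline E<\infty$ into uniform positive lower and finite upper bounds on $\rho$, $|\vec u|$, $p$ and $\vartheta$. In particular $a^2=\gamma p/\rho=\gamma\vartheta$ is confined to a compact subset of $(0,\infty)$, so each of the three quantities
\[
\frac{a^2}{\gamma(|\vec u|^2+2)^2},\qquad \frac{1}{4(a^2+1)},\qquad \frac{1}{4\gamma(|\vec u|^2+1)}
\]
appearing in the lower estimate of Lemma~\ref{lem:lowerbound} is uniformly bounded below, and dividing by the uniformly bounded $p$ still gives a strictly positive uniform constant. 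This yields $\underline\eta>0$.

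For (ii)$\Rightarrow$(i) I would begin with the upper half of Lemma~\ref{lem:lowerbound}, which gives
\[
0<\underline\eta\;\leq\;\min\lambda\!\Big(\tfrac{d^2\eta}{d\vec U^2}\Big)\;\leq\;\frac{(\gamma-1)^{4/3}}{p},
\]
so $p\leq(\gamma-1)^{4/3}/\underline\eta$ and hence $\overline p<\infty$. To extract $\underline\rho>0$ and $\overline E<\infty$ from what remains, I would use that every diagonal entry of a symmetric positive-definite matrix dominates its smallest eigenvalue: testing with $v=e_E$ gives $(\gamma-1)\rho/p^2=\partial^2\eta/\partial E^2\geq\underline\eta$, while $v=e_{m_i}$ gives $1/p+(\gamma-1)\rho u_i^2/p^2=\partial^2\eta/\partial m_i^2\geq\underline\eta$. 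Coupling these directional estimates with the uniform bound on $p$ furnishes a positive lower bound on $\rho$ and a uniform upper bound on $\rho|\vec u|^2$, after which the EOS identity $E=\rho|\vec u|^2/2+p/(\gamma-1)$ delivers $\overline E<\infty$.

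The main obstacle is the converse direction: the isolated inequality $\rho\geq\underline\eta\,p^2/(\gamma-1)$ from the $(E,E)$-entry degenerates as $p\to 0^+$, so one really must exploit the momentum-entry estimates in tandem. An alternative, and in my opinion cleaner, route is a contradiction argument: along any hypothetical sequence of states with $\rho_n\to 0$ or $E_n\to\infty$ one shows, via the explicit lower estimate of Lemma~\ref{lem:lowerbound} together with the identity $a^2=\gamma p/\rho$, that $\min\lambda(d^2\eta/d\vec U^2)\to 0$, contradicting $\underline\eta>0$.
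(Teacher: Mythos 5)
Your forward direction (i)$\Rightarrow$(ii) is correct and is essentially the paper's Step~1 verbatim: Lemma~\ref{lemma:lemma-U-bounded} converts Assumption-type bounds into uniform two-sided bounds on $\rho, |\vec u|, p, a^2$, and the lower half of Lemma~\ref{lem:lowerbound} then gives $\underline{\eta}>0$. The first deduction of your converse direction, $\overline{p}\le(\gamma-1)^{4/3}/\underline{\eta}$ from the upper half of Lemma~\ref{lem:lowerbound}, also coincides with the paper's.

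The rest of the converse direction has a genuine gap. Your diagonal-entry formulas are correct, but the inference drawn from the momentum entries is logically backwards: $\lambda_{\min}\ge\underline{\eta}$ only tells you that each diagonal entry is bounded \emph{below}, i.e. $1/p+(\gamma-1)\rho u_i^2/p^2\ge\underline{\eta}$, and an inequality of this form can never produce an \emph{upper} bound on $\rho u_i^2$; worse, once you know $p\le\overline{p}$, the term $1/p\ge 1/\overline{p}$ makes this constraint essentially vacuous in exactly the regimes you need to exclude. Consequently your route to $\overline{E}<\infty$ collapses: you never obtain $\overline{\rho}<\infty$ or $\overline{u}<\infty$. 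The paper gets these by a genuinely different mechanism that your proposal does not engage: the identity \eqref{eq:relation-eta-lam}, $\min\lambda\big(\tfrac{d^2\eta}{d\vec U^2}\big)=\lambda_1\,\tfrac{\gamma^2(\gamma-1)}{\rho a^4}$, together with the upper bound $\lambda_1\le\tfrac{a^2}{\gamma}(\gamma-1)^{1/3}$ of Lemma~\ref{lemma:eigen-upbound} and the asymptotic behavior of the cubic $g(\lambda)$ whose smallest root is $\lambda_1$ (e.g.\ for $\overline{u}$: fixing $a,p$ and letting $|\vec u|\to\infty$ forces $g(\lambda_1)\to\pm\infty$, contradicting $g(\lambda_1)=0$; for $\overline{\rho}$: with $\vec u,a$ fixed, $\lambda_1$ is fixed and $\min\lambda\propto 1/\rho$).

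Your fallback contradiction argument also fails, because it invokes the wrong half of the sandwich in Lemma~\ref{lem:lowerbound}: along a sequence with $\rho_n\to0$ or $E_n\to\infty$, showing that the \emph{lower} estimate tends to zero proves nothing about $\min\lambda$ itself — to conclude $\min\lambda\to0$ you need upper-bound information, which is exactly what the paper extracts from $g$ and Lemma~\ref{lemma:eigen-upbound}. In fact the lower estimate behaves in the opposite way in the dangerous regime you yourself flagged: if $p\to0$ while $a^2=\gamma p/\rho$ and $\vec u$ are held fixed (so $\rho\propto p\to0$), the lower bound $\tfrac{(\gamma-1)^2}{p}\min(\cdots)$ blows up, so no argument based on it can detect $\rho\to0$. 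The paper's own treatment of $\underline{\rho}>0$ (third bullet of its Step~2) instead fixes $p$ and $\vec u\ne0$, writes $\lambda_1=\tfrac{a^2}{\gamma}x$ with $x$ pinned between a positive lower bound and $(\gamma-1)^{1/3}$, and lets $a^2\to\infty$ to force $g(\lambda_1)\to\infty$, a contradiction. So while your identification of the $p\to0^+$ degeneration of the $(E,E)$-entry bound is accurate, neither of your two proposed repairs overcomes it, and the converse implication remains unproved in your proposal.
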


\begin{proof}
	\textbf{Step 1:}  Suppose that (i) holds.
	Combining Lemma~\ref{lemma:lemma-U-bounded} and Lemma~\ref{lem:lowerbound} we have
	\begin{equation*}
		\underline{\eta} \geq \frac{(\gamma-1)^2}{ \overline{p}} \min \left( \frac{\underline{a}^2}{\gamma(\overline{u}^2+2)^2}, \, \frac{1}{4(\overline{a}^2+1)},\, \frac{1}{4\gamma(\overline{u}^2+1)} \right) > 0,
	\end{equation*}
	where $\overline{a}:= \sqrt{\frac{\gamma\overline{p}}{\underline{\rho}}}, \underline{a}:= \sqrt{\frac{\gamma\underline{p}}{\overline{\rho}}}$ and $\overline{\rho}, \underline{p}, \overline{p}, \overline{u}$ are defined in the proof of  Lemma~\ref{lemma:lemma-U-bounded}.
	
	\textbf{Step 2:} Let (ii) hold. From \eqref{eq:lowerbound} we obtain
	\begin{equation}
	\overline{p} < \infty.
	\end{equation}
	In the following, with the relationship 
	\begin{equation}\label{eq:relation-eta-lam}
	\min\left( \lambda\left( \frac{d^2 \eta}{d \vec U^2}\right) \right) = \lambda_1 \cdot \frac{\gamma(\gamma-1)}{p a^2} = \lambda_1 \cdot \frac{\gamma^2(\gamma-1)}{\rho a^4} 
	\end{equation}
	we show (i) by contradiction.
	\begin{itemize}
		\item $\overline{\rho} < \infty$. 
		
		Let $\vec u, a$ fixed. It is the fact that $g(\lambda)$ only depends on $a^2, |\vec u|^2$, which implies $\lambda_1$ fixed. 
		Hence, we can derive $\overline{\rho} < \infty$ from \eqref{eq:relation-eta-lam}.
		
		\item $\overline{u} < \infty$.
		
		Let $a, p$ fixed. With $\underline{\eta} > 0$ we obtain $\underline{\lambda_1}> 0$. 
		Letting $|\vec u| \rightarrow \infty$ gives the behavior  of $g(\lambda)$ 
		\begin{equation*}
			g(\lambda) = \frac{(|\vec u|^2 +2)^2}{4 \gamma} \cdot \lambda \cdot (a^2 - \gamma \lambda) + \frac{a^2( |\vec u|^2 + 1) }{4\gamma} \lambda.
		\end{equation*}
		Combining $\lambda_1 > \underline{\lambda_1}> 0$ implies $g(\lambda_1) \rightarrow \infty$ or $g(\lambda_1) \rightarrow -\infty$, which is a contradiction with $g(\lambda_1)  = 0$. 
		
		\item $\underline{\rho} > 0$. 
		
		Let $\vec u \neq 0, p$ fixed. With $\underline{\eta} > 0$ we obtain $\underline{ \frac{\lambda_1}{a^2}}> 0$. 
		Call back that
		\begin{align*}
			g\left(\frac{a^2}{\gamma} x\right)
			= ~& \frac{a^6}{\gamma^3}  \bigg( (x-1)^2(x+1) + \gamma[(|\vec u|^2+1)x - 1] \bigg)   \\
			&\quad + \frac{a^4}{4\gamma^2} \gamma(|\vec u|^2+2)^2x(1-x)  + \frac{a^8}{\gamma^3} x(1-x). 
		\end{align*}
		We let $\lambda_1 = \frac{a^2}{\gamma} x$, which implies that $x$ satisfies $\overline{x} > 0$ and $\overline{x} \leq 1$.
		Passing to the limit $\rho \rightarrow 0$, i.e. $a^2 \rightarrow \infty$, we obtain $g\left( \lambda_1\right) \rightarrow \infty$, which is a contradiction.
	\end{itemize}
	Consequently, it holds $E \leq \frac{1}{2} \overline{\rho} \overline{u}^2 + \frac{1}{\gamma-1} \overline{p}$, 
	which concludes the proof.
\end{proof}

\end{document}